\documentclass{article}

\usepackage{arxiv}

\usepackage[utf8]{inputenc} 
\usepackage[T1]{fontenc}    
\usepackage{hyperref}       
\usepackage{url}            
\usepackage{booktabs}       
\usepackage{amsfonts}       
\usepackage{nicefrac}       
\usepackage{microtype}      
\usepackage{lipsum}

\usepackage{amsmath,amsfonts,amssymb,amsthm}

\usepackage{graphicx}
\usepackage{subfigure}
\usepackage[super]{nth}
\usepackage{tabularx}
\usepackage{graphicx}
\usepackage{adjustbox}
\usepackage{multirow}
\usepackage{algorithm}
\usepackage{algorithmicx}
\usepackage{algcompatible}
\usepackage{multicol}
\usepackage{algpseudocode}
\usepackage[super]{nth}
\usepackage{enumitem}
\usepackage{xcolor}
\usepackage{outline}
\newtheorem{defn}{Definition}
\newtheorem{thm}{Theorem}
\newtheorem{lem}{Lemma}
\newtheorem{prop}{Proposition}

\newtheorem{assm}{Assumption}

\newcommand{\E}{\mathbb{E}}
\newcommand{\Q}{\mathfrak{Q}}

\newcommand{\exclude}[1]{}

\newcommand{\X}{\mathcal{X}}

\newcommand{\Tau}{\mathrm{T}}
\renewcommand{\vec}[1]{\mathbf{#1}}

\def\argmin{\mathop{\rm arg\,min}}%

\title{Rolling Horizon Policies in Multistage Stochastic Programming}

\author{
Murwan Siddig \\
  Department of Industrial Engineering\\
  Clemson University\\
  Clemson, SC, USA 29631  \\
  \texttt{msiddig@clemson.edu} \\
   \And
 Yongjia Song \\
  Department of Industrial Engineering\\
  Clemson University\\
  Clemson, SC, USA 29631  \\
  \texttt{yongjis@clemson.edu} \\
  \And
   Amin Khademi \\
  Department of Industrial Engineering\\
  Clemson University\\
  Clemson, SC, USA 29631  \\
  \texttt{khademi@clemson.edu} \\
}

\begin{document}
\maketitle
\begin{abstract}
Multistage Stochastic Programming (MSP) is a class of models for sequential decision-making under uncertainty. MSP problems are known for their computational intractability due to the sequential nature of the decision-making structure and the  uncertainty in the problem data due to the so-called curse of dimensionality. A common approach to tackle MSP problems with a large number of stages is a rolling-horizon (RH) procedure, where one solves a sequence of MSP problems with a smaller number of stages. This leads to a delicate issue of how many stages to include in the smaller problems used in the RH procedure. This paper addresses this question for, both, finite and infinite horizon MSP problems. For the infinite horizon case with discounted costs, we derive a bound which can be used to prescribe an $\epsilon-$sufficient number of stages. For the finite horizon case, we propose a heuristic approach from the perspective of approximate dynamic programming to provide a sufficient number of stages for each roll in the RH procedure. Our numerical experiments on a hydrothermal power generation planning problem show the effectiveness of the proposed approaches. 
\end{abstract}

\keywords{Multistage stochastic programming \and Rolling-horizon \and Stochastic dual dynamic programming \and Approximate dynamic programming}

\section{Introduction}
\label{sec:intro}
Multistage Stochastic Programming (MSP) is a class of decision-making models where the decision-maker (DM) may adapt and control the behavior of a \textit{probabilistic} system \textit{sequentially} over multiple stages. The goal of the DM is to choose a decision policy that leads to optimal system performance with respect to some performance criterion, e.g., maximizing the expected profit or minimizing the expected cost. MSP models have many real-life applications; these include (but not limited to) energy~\cite{goel2004stochastic,de2017assessing,homem2011sampling,pereira2005strategic,rebennack2016combining,shapiro2013risk}, finance~\cite{ahmed2003multi,ch2007modeling,dupavcova2009portfolio,dupavcova2009asset}, transportation~\cite{alonso2000stochastic,fhoula2013stochastic,herer2006multilocation} and sports~\cite{pantuso2017football}, among others. 

Two main challenges arise from solving MSP problems: (i) the \textit{uncertainty} in the problem data; and (ii) the \textit{nested} form in the sequential decision-making structure of the problem. A typical approach to tackle these challenges is to \textit{approximate} the underlying stochastic process using scenario trees to address the former and make use of the dynamic programming principle via the so-called \emph{Bellman equations}~\cite{bellman} to address the latter. This approach, however, could become quickly impractical -- especially, with the increase in the dimensionality of the state variables ($s_t$) of the system, and the number of stages ($T$) in the planning horizon. This is known as the curse-of-dimensionality and it gives rise to an exponential growth in computational resources required to solve such large MSP problems under certain optimality guarantees. In this paper, we are concerned with the computational tractability of MSP problems with a large number of stages $T$, including the special case when $T = \infty$.

In practice, a typical workaround for the computational intractability of MSP problems with a large number of stages is to solve a sequence of MSP problems in an ``online'' fashion, each of which has a smaller number of stages. This approach is the so-called \emph{rolling horizon} (RH) procedure, and the (smaller) number of stages used in each roll of the RH procedure is referred to as the \textit{forecast horizon}~\cite{chand2002forecast,hernandez1988forecast}. In this procedure, the DM fixes a forecast horizon with $\tau \leq T$, solves the corresponding $\tau$-stage problem, implements the decisions \emph{only} for the current period, rolls forward one period, and repeats the process starting from a new initial state. Nevertheless, because the length of the forecast horizon $\tau$ employed by the RH procedure is typically smaller than the actual planning horizon of the original MSP problem, the resulting decision policy, which is referred to as the \emph{look-ahead policy}, may not be optimal. It is conceivable that the degree of suboptimality in a look-ahead policy is tied to the choice for the number of stages $\tau$ used in the RH procedure. This leads to a very delicate issue of how to choose a small enough forecast horizon with a tolerable optimality gap, which we refer to as a \emph{sufficient} forecast horizon $\tau^*$ (see Definition~\ref{defn:sufficient_look-ahead}). The goal of this paper is to address the issue of finding $\tau^*$ in a RH procedure.

In the literature, there is a good deal of work on the RH procedure, aiming to achieve a balance between computational efficiency and the corresponding policy performance. In~\cite{maggioni2014bounds}, for instance, instead of solving a sequence of MSPs with small forecast horizons, the authors use a deterministic approximation to the MSP in the RH procedure using the point forecasts of all the exogenous uncertain future information. The authors also provide bounds on the suboptimality of the corresponding solution and \cite{maggioni2016bounds} extend these bounds to a stochastic RH approximation using what is known as the reference scenarios. ~\cite{pantuso2019number} use a partial approximation approach which uses the point forecasts of exogenous future information only beyond a certain number of stages until the end of the horizon to truncate the number of stages considered in the MSPs. Another common approach is the so-called stage aggregation, where the DM aggregates all the decisions to be made in multiple stages into a single decision stage (see the discussion in~\cite{powell2014clearing}). This approach is used by~\cite{zou2018partially}, who introduced the \emph{partially} adaptive MSP model where stage aggregations only occur after a certain period until the end of the horizon. 

Unlike these prior works, we do not consider aggregating any of the future exogenous uncertain information nor the stages. Instead, we consider solving MSP problems with a smaller number of stages and provide a systematic way of detecting a sufficient forecast horizon $\tau^*$. We focus on finite/infinite horizon MSP problems where the stochastic process characterizing the data uncertainty is stationary. In the \emph{infinite} horizon \emph{discounted} case, given a fixed forecast horizon $\tau$, we show that the resulting optimality gap associated with the (\textit{static}) look-ahead policy in terms of the total expected discounted reward/cost can be bounded by a function of the forecast horizon $\tau$. This function can be used to derive a forecast horizon $\tau^*_{\epsilon}$ which achieves a prescribed $\epsilon$ optimality gap. In the \emph{finite} horizon case (with no discount), we take an approximate dynamic programming (ADP)~\cite{powell2011approximate} perspective and develop a \textit{dynamic} look-ahead policy where the forecast horizon $\tau$ to use in each roll is chosen dynamically according to the state of the system at that stage. 

The rest of this paper is organized as follows. In Section~\ref{sec:prelim}, we describe in detail the key components of an MSP model, introduce the necessary mathematical notations, and discuss some basic assumptions and solution approaches. In Section~\ref{sec:error_bound} we provide a detailed description of the RH procedure and the main theoretical result for the infinite horizon discounted case. In Section~\ref{sec:ADP_approach} we present the ADP-based heuristic approach for constructing the dynamic look-ahead policy for the finite horizon case. An extensive numerical experiment analysis to the proposed approaches on a multi-period hydro-thermal power generation planning problem is presented in Section~\ref{sec:numerical_results}. Finally, in Section~\ref{sec:conclusion}, we conclude with some final remarks.

\section{Preliminaries on Multistage Stochastic Programming}
\label{sec:prelim}
This section is organized as follows. First, we introduce a generic nested formulation for a finite horizon MSP problem, its dynamic programming (DP) counterpart, and some basic assumptions. Second, we discuss solution methodology based on the DP formulation under these basic assumptions. Finally, we describe a stationary analog for the discounted infinite horizon MSP problems. 
\subsection{Generic MSP formulations and some basic assumptions}
\label{subsec:generic_MSP}
Consider a generic nested formulation for a finite horizon MSP problem as follows:
\begin{footnotesize}
\begin{equation}
\label{eq:MSP}
\min_{x_1 \in \X_1(x_0,\xi_1)} f_1(x_1, \xi_{1})+ \E_{|\xi_{[1]}}\left[\rule{0cm}{0.4cm} \min_{x_2\in \X_2(x_1,\xi_2)} f_2(x_2,\xi_{2}) + \E_{|\xi_{[2]}} \left[\rule{0cm}{0.4cm}\cdots + \E_{|\xi_{[T-1]}}\left[\rule{0cm}{0.4cm} \min_{x_T\in \X_T(x_{T-1},\xi_T)}f_T(x_T,\xi_{T}) \right]\right]\right].
\end{equation}
\end{footnotesize}

Here, the initial vector $x_0$ and initial data $\xi_1$ are assumed to be deterministic, and the exogenous random data is modeled by a stochastic process $(\xi_2, \dots, \xi_T) \in \Xi_2 \times \cdots \times \Xi_T$, where each random vector $\xi_t$ is associated with a known probability distribution $D_{t}$ supported on a set $\Xi_{t} \subset \mathbb{R}^{n}$. Moreover, we denote the history of the stochastic process up to stage $t$ by $\xi_{[t]} := (\xi_1, \dots, \xi_t)$. The decision variable $x_t$ is also referred to as the pre-decision state, and the state of the system $s_t$ in stage $t$ is fully resolved after the realization of the random vector $\xi_t$ is observed. Here, $s_t:= S_t(x_t,\xi_t)$ with $S_t:\X_t\times\Xi_t \to \mathbb{R}^m$.

In its present form, this formulation has two main challenges: (i) the nested optimization posed by the sequential nature in the decision-making structure; and (ii) the (conditional) expectation posed by the stochastic nature of the problem. The first challenge can be addressed by using a DP perspective via the Bellman equations and the computation simplifies dramatically under the following \textit{stage-wise} independence assumption.
\begin{assm}
\label{assm:stagewise-indp}
(Stage-wise independence) We assume that the stochastic process $\{\xi_t\}$ is stage-wise independent, i.e., $\xi_t$ is independent of the history of the stochastic process up to time $t-1$, for $t=1, 2, \dots T$, which is given by $\xi_{[t-1]}$.
\end{assm}

Under this assumption, problem~\eqref{eq:MSP} can be written as:
\begin{equation}
\label{eq:1stagepbm}
\displaystyle \min_{x_1\in \X_1(x_0,\xi_1)} f_1(x_1,\xi_{1}) + \Q_2(x_1),
\end{equation}
where for $t=1, \dots, T-1$: $\Q_{t+1}(x_t):= \E[Q_{t+1}(x_t,\xi_{t+1})]$ is referred to as the expected \emph{cost-to-go} function, 
\begin{equation}
\label{eq:cost2go}
Q_t(x_{t-1},\xi_{t}):=
\begin{array}{llll}
\displaystyle\min_{x_t\in \X_t(x_{t-1},\xi_t)} & f_t(x_t,\xi_{t}) + \Q_{t+1}(x_t),
\end{array}
\end{equation}
and $\Q_{T+1}(x_{T}):=0$. As for the second challenge, a typical approach is to proceed by means of discretization and/or approximate the (discretized) distribution of $\xi_t$ by a sample $N_t$ obtained using sampling techniques such as Monte Carlo methods. This is, for instance, the case in which~\eqref{eq:1stagepbm} is a \emph{sample average approximation} (SAA) where the expectation $\E[Q_{t+1}(x_t,\xi_{t+1})]$ is approximated by a sample average. We refer the reader to \cite{shapiro2011analysis} for a detailed discussion on this topic. Before we discuss the solution methodology based on the DP formulation~\eqref{eq:1stagepbm}, it is important to make the following additional assumptions.
\begin{assm}
\label{assm:relative_recourse}
(Relatively complete recourse) We assume that $\X_{t}(x_{t-1},\xi_t) \neq \emptyset, \ \forall x_{t-1} \in \X_{t-1}$ and $\xi_{t} \in \Xi_{t}, \ \forall t=1, \dots, T$. 
\end{assm}
\begin{assm}
\label{assm:boundedness}
(Boundedness) We assume that the immediate cost function $f_t(\cdot,\cdot)$ at each stage $t$ is bounded, i.e., $\exists \, \kappa >0$ such that $|f_t(x_t,\xi_t)|\leq \kappa, \, \forall (x_t,\xi_t) \in \mathcal{X}_t\times\Xi_{t}, \; \forall \, t=1, 2 \dots, T$.
\end{assm}
\begin{assm}
\label{assm:linearity}
We consider multistage stochastic \emph{linear} programs (MSLPs), i.e., $f_t(x_t,\xi_{t})$ in~\eqref{eq:MSP} is defined as: 
\begin{equation}
\label{eq:MLSPobj}
f_t(x_t,\xi_{t}) :=
\begin{cases}
\vec{\tilde c}_{t}^\top x_t  \quad \text{if} \; x_t \geq 0, \quad \forall t=1, \dots, T\\
+\infty \quad \text{otherwise.}
\end{cases}
\end{equation}
and $\X_t(x_{t-1}, \xi_t) := \{x_t \in \mathbb{R}_{+}^m\; | \; \tilde A_t x_t + \tilde B_t x_{t-1} = \tilde b_t\}$ with $\xi_t = (\vec{\tilde c}_{t}, \tilde A_t, \tilde B_t, \tilde b_t)$ for $t=1, \dots T$.
\end{assm}

Assumption~\ref{assm:relative_recourse} and Assumption~\ref{assm:linearity} are made for simplicity -- the proposed approaches can be potentially used for more general classes of MSPs. Assumption~\ref{assm:boundedness} is a technical assumption made without loss of generality. \exclude{The numerical tractability of \emph{linear programs} (LPs) makes the SAA approximation by scenarios a very attractive approach. This is because the SAA approximation has a deterministic equivalent program (DEP) with a special structure that lends itself to decomposition schemes developed for solving large-scale LPs. These include variants of \emph{Benders} decomposition (also called the L-shaped method \cite{Lshaped}) for the two-stage setting, which generalizes to \emph{nested Benders} \cite{rahmaniani2017benders} for the multistage case. These decomposition schemes can also be viewed as variants of Kelley's cutting plane method for solving convex programs \cite{kelley1960}. }

\subsection{The stochastic dual dynamic programming (SDDP) algorithm}
\label{subsec:SDDP}
Under Assumption~\ref{assm:linearity}, a backward induction argument can be used to show that the expected cost-to-go function $\Q_{t}(x_{t-1})$  is piecewise linear and convex with respect to $x_{t-1}, \; \forall t=1, \dots, T$. Therefore, $\Q_{t}(x_{t-1})$ can be approximated from below by an \textit{outer} cutting-plane approximation $\check \Q_{t}(x_{t-1})$, which can be represented by the maximum of a collection of $L$ cutting planes (cuts):
\begin{equation}
\label{eq:cost_to_go_approx}
\check\Q_{t}(x_{t-1}) = \max_{\ell \in L}\left\{\beta_{t, \ell}^{\top} x_{t-1} + \alpha_{t, \ell} \right\}.
\end{equation}

A common approach for assembling these \textit{cuts} is the SDDP algorithm~\cite{SDDP}. Drawing influence from the backward recursion technique developed in DP, the SDDP algorithm alternates between two main steps: (i) a forward simulation (\emph{forward pass}) which uses the current approximate expected cost-to-go functions $\check \Q_{t+1}(\cdot)$ to generate a sequence of decisions $\check x_t := \check x_t(\xi_t), \forall \; t=2, \dots, T$; and (ii) a backward recursion (\emph{backward pass}) to improve the approximation $\check \Q_{t+1}(\cdot)$, for $t=2, \dots T$. After the forward step, a statistical upper bound for the optimal value of~\eqref{eq:1stagepbm} can be computed; and after the backward step, an improved lower bound for the optimal value of~\eqref{eq:1stagepbm} is obtained. \exclude{Nevertheless, an important assumption in the discussion of the SDDP algorithm is the following \emph{stage-wise} Independence assumption.{\color{blue} (Please check, but this assumption needs to be moved to somewhere above. It is the stage-wise independence assumption that makes it possible to define an expected cost-to-go function $\Q_t(\cdot)$ one for each stage rather than one for each node, e.g., in a scenario tree. Some reorganization needs to be done here after you make the move.)}} We summarize the forward and backward steps of the algorithm next and the reader is referred to \cite{SDDP} for a detailed discussion on that topic.

Given the initial (pre-decision) state $x_0$, the initial realization $\xi_1$, and the current approximations of the expected cost-to-go functions $(\check \Q_2(\cdot), \dots, \check \Q_T(\cdot))$, the SDDP algorithm proceeds as follows.
\begin{itemize}
  \item \textit{Forward pass.}
  \begin{enumerate}
  \item Initialize a list of candidate solutions $(\check x_1, \dots, \check x_T)$, set $t=1$, $\check x_{t-1} = x_0$, $\check \xi_t = \xi_1$.
  \item Solve $\check Q_t(\check x_{t-1},\check\xi_t)$ with $\check \Q_{t+1}(\cdot)$ as shown in~\eqref{eq:cost2go}, to obtain $x^*_t$ and then set $\check x_t = x^*_t$.
  \item If $t=T$ go to \textit{Backward pass}. Otherwise, let $t \gets t+1$, sample a new realization $\xi_{t+1} \in \Xi_{t+1}$ with probability $\mathbb{P}(\xi_{t+1} \in \Xi_{t+1})$, set $\check \xi_{t+1} = \xi_{t+1}$, and go to step 2.
  \end{enumerate}
  \item \textit{Backward pass.}
  \begin{enumerate}
  \item Given $(\check x_1, \dots, \check x_T)$, for $t=T,\dots,2$ do the following.
  \item For every $\xi_t \in \Xi_t$, solve $\check Q_t(\check x_{t-1},\xi_t)$ with $\check \Q_{t+1}(\cdot)$ as shown in~\eqref{eq:cost2go}. Add a cut weighted by the respective probability $\mathbb{P}(\xi_{t} \in \Xi_{t})$ to $\check \Q_{t}(\cdot)$ (if violated by $\check{x}$).
  \item If the termination criterion is met, \textbf{STOP} and return the expected cost-to-go functions $\check \Q_{t}(\cdot),\; \forall t =2,\dots, T$. Otherwise, go to \textit{Forward pass} and repeat.
  \end{enumerate}
\end{itemize}

While the SDDP algorithm has become a popular approach for solving MSP problems, using SDDP (and other similar decomposition schemes) can become impractical with the increase in the dimensionality of the problem. In the literature, there has been some recent advancement on how some of this impracticality can be mitigated by (static/dynamic) scenario reduction and aggregation techniques, see, e.g.,~\cite{siddig2019adaptive} and the references therein. Nevertheless, even when using such enhancement techniques, the viability of these approaches could be easily undermined in MSP problems where the number of stages in the planning horizon $T$ is large. 

\subsection{Stationary discounted infinite horizon MSP problems}
\label{subsec:infinite_MSP}
Consider an infinite horizon MSP problem of the form~\eqref{eq:MSP} introduced in~\cite{shapiro2019stationary}, where $T=\infty$ and the stochastic process has a periodic behavior with $m \in \mathbb{N}$ periods. To simplify the presentation, we will only consider the \textit{risk-neutral} and \textit{stationary} case where the number of periods $m=1$. To that end, we have the following \textit{stationarity} assumption. 
\begin{assm}
\label{assm:stationarity}
(Stationarity) The random vector $\xi_{t}$ has the same distribution with support $\Xi \subset \mathbb{R}^m$, and functions $f_t(\cdot,\cdot), A_t(\cdot), B_t(\cdot), b_t(\cdot)$ are identical for all $t = 2,3,\ldots, T$.
\end{assm}

Unlike the finite horizon case, where the solvability of the DP formulation~\eqref{eq:cost2go} is guaranteed by the finiteness of the terminal stage value function, infinite horizon MSP problems require an evaluation of an infinite sequence of costs for every state $s_t$. As such, it is necessary to establish some notion of finiteness for the expected cost-to-go functions $\Q_t(\cdot)$. One common approach for doing this is to introduce a \emph{discount} factor $\gamma \in (0,1)$ and find a policy $\pi =\{x^{\pi}_t\}_{t=1}^{T}$ which minimizes the expected total discounted cost, that is,
\begin{equation}
\label{eq:expec_tot_disc_cost}
\min_{\pi \in \Pi} f_1(x^{\pi}_1, \xi_1) + \lim_{T\to \infty} \mathbb{E}\left[\sum_{t=2}^{T} \gamma^{t-1} f_t(x^{\pi}_t, \xi_t)\right].
\end{equation}
Note that, since $\xi_t$'s are assumed to follow the same probability distribution for all $t\geq 2$, we can omit the stage subscript $t$ and obtain the following discounted infinite horizon stationary variant of the DP formulation~\eqref{eq:cost2go}:
\begin{equation}
  \label{eq:cost2go_periodic}
  Q(x,\xi):=
  \begin{array}{llll}
  \displaystyle\min_{x'\in \X(x,\xi)} & f(x', \xi) + \gamma\Q(x'),\\
  \end{array}
\end{equation}
where $\Q(x):= \E[Q(x,\xi)]$ and random vector $\xi$ has the same distribution as $\xi_t$'s.

The popularity of these discounted models for infinite horizon MSPs stems from their numerical solvability and their wide range of direct applications in real-life economic problems where the DM accounts for the time value of future costs. This solvability, however, relies on the existence of a \emph{unique fixed point} solution for function $\Q(\cdot)$ satisfying~\eqref{eq:cost2go_periodic}. Such existence can be asserted using the \textit{Banach} fixed-point theorem. We summarize this result next, and readers are referred to~\cite{shapiro2019stationary} for a detailed discussion. To that end, consider the following:
\begin{itemize}
  \item $\mathfrak{B} := \mathbb{B}(\X)$: the Banach space of bounded functions $\mathfrak{g}:=g(x)$, and $g: \X \to \mathbb{R}$ is equipped with the sup-norm $\|g\|_\mathfrak{B} = \sup_{x \in \X}|g(x)|$.
  \item A mapping $\mathfrak{T}: \mathfrak{B} \to \mathfrak{B}$ defined as $\mathfrak{T}(\mathfrak{g})(x) := \mathbb{E}[\psi_{g}(x,\xi)]$ where
 \begin{equation}
  \label{eq:optimality_eq}
  \psi_{g}(x,\xi) := \min_{x' \in \X(x,\xi)}\left\{f(x', \xi) + \gamma g(x')\right\}.
  \end{equation}
\end{itemize}
\begin{prop}
  \label{prop:contraction} (Contraction mapping) The mapping $\mathfrak{T}$ is a contraction mapping, that is, for every $\mathfrak{g}, \mathfrak{g}' \in \mathfrak{B}$ the following inequality holds:
  \begin{equation}
 \label{eq:contraction}
 \|\mathfrak{T}(\mathfrak{g}) - \mathfrak{T}(\mathfrak{g}')\|_{\mathfrak{B}} \leq \gamma \|\mathfrak{g} - \mathfrak{g}'\|_{\mathfrak{B}}.
  \end{equation}
\end{prop}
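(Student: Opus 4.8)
The plan is to reduce the sup-norm bound in~\eqref{eq:contraction} to a pointwise bound on the integrand $\psi_g$, exploiting the crucial structural fact that the feasible set $\X(x,\xi)$ in~\eqref{eq:optimality_eq} depends only on $(x,\xi)$ and not on the value function $g$. Consequently, the two inner problems defining $\psi_g(x,\xi)$ and $\psi_{g'}(x,\xi)$ differ \emph{only} in their objectives, and I can compare them directly by feasibility-swapping.

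Concretely, I would first fix an arbitrary state $x \in \X$ and realization $\xi \in \Xi$, and let $\bar x'$ attain the minimum in $\psi_{g'}(x,\xi)$; the minimizer exists because Assumption~\ref{assm:relative_recourse} guarantees $\X(x,\xi)\neq\emptyset$, and if one prefers to sidestep attainment one can instead take an $\epsilon$-minimizer and let $\epsilon \downarrow 0$. Since $\bar x'$ is feasible for the problem defining $\psi_g(x,\xi)$, I obtain $\psi_g(x,\xi) \le f(\bar x',\xi) + \gamma g(\bar x') = \psi_{g'}(x,\xi) + \gamma\big(g(\bar x') - g'(\bar x')\big) \le \psi_{g'}(x,\xi) + \gamma\|\mathfrak{g}-\mathfrak{g}'\|_{\mathfrak{B}}$, where the final step is just the definition of the sup-norm. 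Swapping the roles of $g$ and $g'$ yields the reverse inequality, so that $|\psi_g(x,\xi) - \psi_{g'}(x,\xi)| \le \gamma\|\mathfrak{g}-\mathfrak{g}'\|_{\mathfrak{B}}$ for every $(x,\xi)$.

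The remaining steps are routine. Taking expectations over $\xi$ and using the elementary bound $|\E[\cdot]| \le \E[|\cdot|]$, together with the fact that the pointwise bound $\gamma\|\mathfrak{g}-\mathfrak{g}'\|_{\mathfrak{B}}$ is a constant independent of $\xi$, gives $|\mathfrak{T}(\mathfrak{g})(x) - \mathfrak{T}(\mathfrak{g}')(x)| \le \gamma\|\mathfrak{g}-\mathfrak{g}'\|_{\mathfrak{B}}$; taking the supremum over $x \in \X$ then delivers~\eqref{eq:contraction}. Along the way one should confirm that $\mathfrak{T}$ genuinely maps $\mathfrak{B}$ into itself, i.e.\ that $\mathfrak{T}(\mathfrak{g})$ is bounded: this follows from the boundedness of $f$ in Assumption~\ref{assm:boundedness} and the boundedness of $g$, which keep $\psi_g$ uniformly bounded.

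I expect the only genuine subtlety to lie in the feasibility-swapping step---specifically, justifying the existence (or $\epsilon$-approximation) of the minimizer $\bar x'$ and confirming that the \emph{same} feasible point can legitimately be evaluated against both objectives. Everything downstream---monotonicity of the expectation and passing to the supremum---is mechanical, and the contraction factor $\gamma<1$ emerges directly from the way $\gamma g$ enters the objective linearly.
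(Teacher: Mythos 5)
Your proof is correct and complete. Note that the paper offers no proof of Proposition~\ref{prop:contraction} at all: it refers the reader to~\cite{shapiro2019stationary} for the proofs of both this proposition and Theorem~\ref{thm:uniqueness}, so your argument supplies exactly what the paper omits, and it does so by the standard route used in that reference --- since $\X(x,\xi)$ does not depend on the value function, the two minimization problems share a feasible set, which yields the pointwise bound $|\psi_{g}(x,\xi)-\psi_{g'}(x,\xi)|\le \gamma\|\mathfrak{g}-\mathfrak{g}'\|_{\mathfrak{B}}$ (equivalently, the elementary fact that a difference of infima over a common set is at most the supremum of the difference of the objectives), after which the expectation and supremum steps are immediate. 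Your use of $\epsilon$-minimizers is also the right precaution: non-emptiness of $\X(x,\xi)$ under Assumption~\ref{assm:relative_recourse} does not by itself give attainment of the minimum for an arbitrary bounded $g\in\mathfrak{B}$, while Assumption~\ref{assm:boundedness} together with boundedness of $g$ guarantees the infimum is finite, which is all the argument requires.
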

\begin{thm}
\label{thm:uniqueness}
\cite{shapiro2019stationary} Under Assumptions~\ref{assm:relative_recourse}, \ref{assm:boundedness} and \ref{assm:stationarity}, the following results hold:
\begin{itemize}
  \item Uniqueness: there exists a unique function $Q(\cdot,\cdot)$ satisfying the DP formulation~\eqref{eq:cost2go_periodic}.
  \item Convergence: for any $\mathfrak{g}^0 \in \mathfrak{B}$ the sequence of functions $\{\mathfrak{g}^k\}_{k=0}^{\infty}$ obtained iteratively by
  \begin{equation}
  \label{eq:cost2go_m1}
  \mathfrak{g}^{k}(x)=\mathfrak{T}(\mathfrak{g}^{k-1})(x) = \E[\psi_{g^{k-1}}(x,\xi)] := \E\Big[
  \min_{x^{k} \in \X(x,\xi)}\Big\{f(x^{k},\xi) + \gamma g^{k-1}(x^{k})\Big\}\Big] \quad \forall \; x \in \X,
  \end{equation}

  converges (in the norm $\|\cdot\|_\mathfrak{B}$) to $\Q(\cdot)$.
  \item Convexity: if set $\X$ is convex and function $f(x, \xi)$ is convex in $x \in \X$, then function $\Q:\X \to \mathbb{R}$, is also convex.
\end{itemize}
\end{thm}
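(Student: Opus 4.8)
The plan is to recognize the three claims as direct consequences of the Banach fixed-point theorem applied to the operator $\mathfrak{T}$, whose contractivity with modulus $\gamma \in (0,1)$ is already supplied by Proposition~\ref{prop:contraction}. The only preliminary work is to confirm the remaining hypotheses of that theorem: that $\mathfrak{B}$ is complete (which holds, since it is the space of bounded real-valued functions on $\X$ under the sup-norm) and, crucially, that $\mathfrak{T}$ maps $\mathfrak{B}$ into itself. For the latter I would argue that for any $\mathfrak{g} \in \mathfrak{B}$ and any $(x,\xi)$, relatively complete recourse (Assumption~\ref{assm:relative_recourse}) makes $\X(x,\xi)$ nonempty so the inner minimum in~\eqref{eq:optimality_eq} is attained, while boundedness of $f$ (Assumption~\ref{assm:boundedness}) together with $\|g\|_\mathfrak{B} < \infty$ gives the uniform estimate $|\psi_g(x,\xi)| \leq \kappa + \gamma\|g\|_\mathfrak{B}$; taking expectation preserves this bound, so $\mathfrak{T}(\mathfrak{g}) \in \mathfrak{B}$.

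With $\mathfrak{T}: \mathfrak{B} \to \mathfrak{B}$ a $\gamma$-contraction on a complete space, Banach's theorem immediately yields a unique fixed point $\Q \in \mathfrak{B}$ (the expected cost-to-go function) together with the convergence $\|\mathfrak{g}^k - \Q\|_\mathfrak{B} \to 0$ from any starting point $\mathfrak{g}^0$, which is exactly the \emph{Convergence} claim. For \emph{Uniqueness}, I would translate the fixed-point identity $\mathfrak{T}(\Q) = \Q$ back into the stated form: defining $Q(x,\xi)$ to be the inner minimum in~\eqref{eq:optimality_eq} evaluated at $g = \Q$ shows that this $Q(\cdot,\cdot)$ satisfies~\eqref{eq:cost2go_periodic}, while any other solution of~\eqref{eq:cost2go_periodic} would, through its expectation, induce a distinct fixed point of $\mathfrak{T}$, contradicting the uniqueness just established.

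The \emph{Convexity} claim requires a separate argument layered on top of the fixed-point machinery. I would show that the subset $\mathfrak{C} \subset \mathfrak{B}$ of bounded \emph{convex} functions is (i) closed in the sup-norm, since a uniform limit of convex functions is convex, and (ii) invariant under $\mathfrak{T}$, i.e.\ $\mathfrak{g} \in \mathfrak{C} \Rightarrow \mathfrak{T}(\mathfrak{g}) \in \mathfrak{C}$. Granting these, running the iteration~\eqref{eq:cost2go_m1} from a convex starter (e.g.\ $\mathfrak{g}^0 \equiv 0$) keeps every iterate in $\mathfrak{C}$, and closedness forces the limit $\Q$ into $\mathfrak{C}$ as well. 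The heart of the matter is invariance: for fixed $\xi$, I would verify that the graph $\{(x,x') : x' \in \X(x,\xi)\}$ is convex (under Assumption~\ref{assm:linearity} it is precisely the polyhedron defined by $\tilde A x' + \tilde B x = \tilde b$, $x' \geq 0$), so that partial minimization of the jointly convex objective $f(x',\xi) + \gamma g(x')$ over $x'$ yields a value function $\psi_g(\cdot,\xi)$ that is convex in $x$; taking the expectation over $\xi$ then preserves convexity.

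I expect the invariance step to be the main obstacle, since it is the one place where the structure of the feasible-set correspondence---rather than mere contractivity---does the real work. The subtlety is that partial minimization preserves convexity only because the objective is \emph{jointly} convex in $(x,x')$ and the constraint graph is convex; convexity in each variable separately would not suffice. Everything else reduces to routine bookkeeping around the Banach theorem.
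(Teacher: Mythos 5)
Your proposal is correct and is essentially the proof the paper has in mind: the paper itself does not write out an argument (it refers the reader to \cite{shapiro2019stationary}), but the machinery it sets up---the Banach space $\mathfrak{B}$, the operator $\mathfrak{T}$, and the contraction property in Proposition~\ref{prop:contraction}---is exactly the Banach fixed-point route you follow, with uniqueness and convergence coming from the fixed-point theorem and convexity obtained by propagating convexity of the iterates through~\eqref{eq:cost2go_m1} (using joint convexity of the objective and convexity of the constraint graph) and passing to the uniform limit. Nothing in your argument deviates from or falls short of that standard treatment.
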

The uniqueness and convergence assertions of Theorem~\ref{thm:uniqueness} imply that the unique solution $\mathfrak{g}^{*}=\mathfrak{T}(\mathfrak{g}^{*})$ obtained iteratively using~\eqref{eq:cost2go_m1} is the optimal solution to the DP formulation~\eqref{eq:cost2go_periodic}. Moreover, under the convexity assertion, \cite{shapiro2019stationary} show that $\Q(x)$ is piecewise linear and convex with respect to $x$, and provide an SDDP-type cutting-plane algorithm which gives an optimal policy $\pi_{\infty}$ to the corresponding infinite horizon problem~\eqref{eq:expec_tot_disc_cost}. We refer the reader to~\cite{shapiro2019stationary} for the proofs of Proposition~\ref{prop:contraction} and Theorem~\ref{thm:uniqueness}, a detailed description of the SDDP-type algorithm, and further discussions on the topic.

\section{An Error Bound for Rolling-horizon Policies in Multistage Stochastic
Programming}
\label{sec:error_bound} 
The RH procedure is a well-known approach to construct online decision policies for sequential decision making. In this section, our goal is to provide an error bound to the suboptimality of the online policy corresponding to the RH procedure for solving the stationary discounted infinite horizon MSP model~\eqref{eq:cost2go_periodic}. To that end, let us first discuss the RH procedure and clarify what is meant by an online policy.
\subsection{Rolling-horizon procedure: online vs. offline policies}
\label{subsec:online_vs_offline}
To demonstrate what is meant by an online policy, it might be instructive to revisit and analyze the form of what is considered as an \emph{offline} policy first. We consider the offline policy provided by the standard implementation of the SDDP algorithm as mentioned in Subsection~\ref{subsec:SDDP}. Recall that the final output of the SDDP algorithm is a collection of approximate expected cost-to-go functions $(\check \Q_2(\cdot), \dots, \check \Q_T(\cdot))$. This collection of expected cost-to-go functions is what defines the \emph{offline} policy provided by the SDDP algorithm. In other words, an offline policy is a policy induced by the \emph{approximate value functions} obtained via an offline training procedure. The quality of this offline policy can be evaluated by using a set of $\mathcal{K}$ (out-of-sample) scenarios $\{\xi^k\}_{k\in \mathcal{K}}$, with $|\mathcal{K}| \ll |\Xi_1| \times |\Xi_2| \times \dots \times |\Xi_T|$, and $\xi^k = (\xi_2^{k_2}, \ldots, \xi_T^{k_T})$. \exclude{The set of $\mathcal{K}$ scenarios can then be used to compute the values $z(\xi^k) = \sum_{t=1}^T f_t(\check x_t,\xi^{k_t}_t), \; \forall \; k \in \mathcal{K},$, as well as
\begin{equation}
\label{eq:upper_bound}
\begin{array}{llll}
\tilde  z= \frac{1}{|\mathcal{K}|}\displaystyle\sum_{k \in \mathcal{K}}z(\xi^k), \\
\sigma^2=\frac{1}{|\mathcal{K}|}\displaystyle\sum_{k \in \mathcal{K}}\left[z(\xi^k)-\tilde z\right]^2,
\end{array}
\end{equation}
with $\tilde{z}$ and $\sigma^2$ being the sample average and sample variance of $z(\cdot)$, respectively. The sample average $\tilde{z}$ provides an unbiased estimator for an upper bound of the optimal
\begin{equation}
\label{eq:unbaised_ub}
\hat z=\E\left[\sum_{t=1}^T f_t(x_t,\xi_t).\right].
\end{equation}
}

Alternatively, one might consider using an RH procedure where an online policy is given by \emph{solving an MSP problem} on the fly for decision stage $t=1, \dots, T$ during an out-of-sample evaluation. To demonstrate how the RH procedure works, let us define a \textit{roll} as the decision stage at which the DM implements their actions. For any trajectory of the stochastic process $\xi^k = (\xi^{k_2}_2, \dots, \xi^{k_T}_T)$, at the beginning of every roll $t = 1, \dots, T$, based on the current (pre-decision) state $x_t$ and the observed realization $\xi^{k_t}_{t}$, the DM defines a new MSP problem of the form~\eqref{eq:MSP} with $\tau \leq T$ being the number of look-ahead stages. The newly defined problem is then solved (e.g., using the SDDP algorithm) and \emph{only} the first-stage decision $\check x_{t}$ is implemented. The DM then pays the immediate cost $f_t(\check x_{t},\xi^{k_t}_{t})$ and moves forward to the next stage. At that stage ($t+1$), a new problem is defined with state variable $s_t = S_t(\check x_{t},\xi^{k_t}_t)$ being the information carried over from the previous stage, $\xi^{k_{t+1}}_{t+1}$ being the observed realization of the stochastic process, and $\tau \leq T$ (possibly different from the one used in the previous stage) being the number of look-ahead stages. The online policy is then evaluated by collecting all the immediate costs $f_t(\check  x_t,\xi^{k_t}_{t})$ incurred at every roll $t$ in a collection of (out-of-sample) scenarios $\{\xi^k\}_{k\in \mathcal{K}}$. A symbolic representation of the RH procedure is depicted in Figure~\ref{fig:RH-procedure} (in the appendix) and an algorithmic description is provided in Algorithm~\ref{alg:RH-procedure}.

\begin{algorithm}[htbp]
\caption{The rolling horizon procedure (\textit{online} policy).}
\label{alg:RH-procedure}
\begin{algorithmic}[0]
\State \textbf{Input:} Initial value $x_0$ and an out-of-sample path $\xi = (\xi_1, \xi_2, \dots, \xi_T)$.
\State \textbf{STEP 0:} Initialize $z(\xi)=0$ and a vector $\check x := (\check x_0,\check x_1, \dots ,\check x_T)$ with $\check x_0 = x_0$ given as input.
\State \textbf{STEP 1:} For $t=1, 2, \dots, T$
\begin{itemize}
\item Define an MSP problem $\mathcal{P}_t(\tau,\check x_{t-1},\xi_t)$ of the form~\eqref{eq:cost2go}, with $\Q_{t+\tau}(\cdot) := 0$, $\check x_{t-1}$ as the initial value, and $\xi_t$ as the observed realization of the random variable.
\item Solve $\mathcal{P}_t(\tau,\check x_{t-1},\xi_t)$ to obtain the optimal decision $x^*_t$, and set $\check x_{t} = x^*_t$.
\item Update $z(\xi) \to z(\xi) + f_t(x^*_t,\xi_{t})$
\item Set $\check x_{t} = x^*_t$. 
\end{itemize}
\State \textbf{STEP 2:} return $z(\xi)$.
\end{algorithmic}
\end{algorithm}

The MSP problem solved at the beginning of every roll, with $\tau$ being the length of the forecast horizon, can be defined in many different ways. We summarize them in the following three categories and the reader is referred to~\cite{powell2014clearing} for a detailed discussion. The reader is also referred to~\cite{chand2002forecast} for a classified bibliography of the literature on the RH procedure.
\begin{enumerate}
  \item \textbf{Limiting the horizon:} reduce the number of stages in every roll by assuming that $\Q_{t+\tau}(\cdot) := 0$.
  \item \textbf{Stage aggregation:} aggregate the stages from $t' = t+\tau, \dots, T$ in the same spirit as the \emph{partially adaptive} framework introduced in~\cite{zou2018partially} such that
  \begin{footnotesize}
  \begin{equation}
  \label{eq:PAstaget}
  \displaystyle \Q_{t+\tau}(x_{t+\tau-1}) := \mathbb{E}\left[\min_{\substack{x_{t+\tau}, \dots, x_T}}
  \left\{ \sum_{t'=t+\tau}^{T} f_{t'}(x_{t'}, \xi_{t'}) \ \middle\vert
  \begin{array}{l}
  \left(x_{t+\tau},\dots, x_{T}\right) \in \X_{t+\tau}(x_{t+\tau-1},\xi_{t+\tau})\times\dots\times \X_T(x_{T-1},\xi_T),\\
  x_{t'}(\xi_{t'}) = x_{t'},\, \; \forall t'=t+\tau, \dots, T 
  \end{array}\right\}
  \right].
  \end{equation}  
  \end{footnotesize}
  \item \textbf{Outcome aggregation:} use a point estimator $\mu_{t'} = \E[\xi_{t'}]$ for all the exogenous information from $t'=t+\tau, \dots, T$ in the same spirit as the framework discussed in~\cite{pantuso2019number} such that
  \begin{footnotesize}
  \begin{equation}
  \label{eq:OAstaget}
  \displaystyle \Q_{t+\tau}(x_{t+\tau-1}) := \min_{\substack{x_{t+\tau}, \dots, x_T}}
  \left\{ \sum_{t'=t+\tau}^{T} f_{t'}(x_{t'},\mu_{t'}) \ \middle\vert
  \begin{array}{l}
  \left(x_{t+\tau}, \dots, x_{T})\right) \in \X_{t+\tau}(x_{t+\tau-1},\xi_{t+\tau})\times\dots\times \X_T(x_{T-1},\xi_{T})
  \end{array}\right\}.
  \end{equation}  
  \end{footnotesize}
 \end{enumerate}
 For simplicity, in this paper, we will only consider the first strategy (limiting the horizon). Nevertheless, we expect that similar results in this paper can be extended to the other strategies as well. 
 
\subsection{An error bound for RH online policies used for solving stationary infinite horizon discounted models}
\label{subsec:error_bound}
In this subsection, our goal is to construct an error bound to the suboptimality of the RH procedure when used to solve problem~\eqref{eq:cost2go_periodic}. The bound can then be used to derive a \textit{sufficient} forecast horizon $\tau^*_{\epsilon}$ (see Definition~\ref{defn:sufficient_look-ahead}). Our work is inspired by that of~\cite{hernandez1990error} where the authors provide similar ideas for  discrete-time Markov control processes. an

To that end, let us first make clear the definition of a sufficient forecast horizon $\tau^*$. Let $\pi_\tau:= \{x^{\tau}_t\}_{t=1}^{\infty}$ be the sequence of optimal first-stage decisions obtained by an online RH procedure, where $\tau$ is the forecast horizon used in every roll $t=1, \dots, \infty$, such that,
\begin{equation}
\label{eq:total_cost}
  V(\pi_\tau) := f_1(x^{\tau}_1, \xi_t) + \lim_{T\to \infty} \E\left[\sum_{t=2}^{T} \gamma^{t-1} f_t(x^{\tau}_t, \xi_t)\right]
\end{equation}
is the total expected discounted cost when evaluated using $\pi_\tau$. Similarly, we have that $\mathfrak{g}^* = V(\pi_\infty)$ is the minimizer of~\eqref{eq:expec_tot_disc_cost} where 
\begin{equation}
\label{eq:total_cost_optimal}
  \pi_\infty \in \argmin_{\pi \in \Pi} \left\{ f_1(x^{\infty}_1,\xi_t) + \lim_{T\to \infty} \E\left[\sum_{t=2}^{T} \gamma^{t-1} f_t(x^{\infty}_t, \xi_t)\right]\right\}. 
\end{equation}
\begin{defn}
\label{defn:sufficient_look-ahead}
(\emph{$\epsilon$-Sufficient forecast horizon $\tau^*_\epsilon$}) For $\epsilon > 0$, we define a forecast horizon $\tau^*_\epsilon$ as $\epsilon$-sufficient if $V(\pi_{\tau})-V(\pi_\infty) \leq \epsilon,$ for all $\tau \in \{\tau^*, \tau^*+1, \dots, \infty\}$.
\end{defn}
Under the contraction mapping assertion of Proposition~\ref{prop:contraction}, we have the following lemma.
\begin{lem}
\label{lem:contraction} For any $\mathfrak{g}^{0} \in \mathfrak{B}$ and $k=1, \dots, \infty$, the value functions $\mathfrak{g}^{k}$ obtained iteratively using~\eqref{eq:cost2go_m1} satisfies $ \|\mathfrak{g}^{*}-\mathfrak{g}^{k}\| \leq \gamma^k\|\mathfrak{g}^{*}-\mathfrak{g}^{0}\|$. 
\end{lem}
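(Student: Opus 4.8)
The plan is to recognize this as the standard geometric-convergence estimate for a fixed-point iteration driven by a contraction, and to obtain it by a one-line base step followed by induction on $k$. The two facts I would lean on are already available in the excerpt: the mapping $\mathfrak{T}$ is a contraction with modulus $\gamma$ by Proposition~\ref{prop:contraction}, and the limit $\mathfrak{g}^*$ is its unique fixed point, i.e. $\mathfrak{g}^* = \mathfrak{T}(\mathfrak{g}^*)$, as noted in the discussion following Theorem~\ref{thm:uniqueness}. The iterates themselves satisfy $\mathfrak{g}^{k} = \mathfrak{T}(\mathfrak{g}^{k-1})$ by the very definition of the recursion in~\eqref{eq:cost2go_m1}.

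First I would establish the single-step contraction. Taking $\mathfrak{g} = \mathfrak{g}^{*}$ and $\mathfrak{g}' = \mathfrak{g}^{k-1}$ in~\eqref{eq:contraction} and using $\mathfrak{T}(\mathfrak{g}^{*}) = \mathfrak{g}^{*}$ and $\mathfrak{T}(\mathfrak{g}^{k-1}) = \mathfrak{g}^{k}$ gives
\begin{equation}
\|\mathfrak{g}^{*} - \mathfrak{g}^{k}\|_{\mathfrak{B}} = \|\mathfrak{T}(\mathfrak{g}^{*}) - \mathfrak{T}(\mathfrak{g}^{k-1})\|_{\mathfrak{B}} \leq \gamma\,\|\mathfrak{g}^{*} - \mathfrak{g}^{k-1}\|_{\mathfrak{B}},
\end{equation}
valid for every $k \geq 1$. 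This is the induction step in disguise: the distance to the fixed point shrinks by at least a factor $\gamma$ at each iteration.

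Next I would iterate. Using the single-step bound repeatedly, a straightforward induction on $k$ yields $\|\mathfrak{g}^{*} - \mathfrak{g}^{k}\|_{\mathfrak{B}} \leq \gamma^{k}\,\|\mathfrak{g}^{*} - \mathfrak{g}^{0}\|_{\mathfrak{B}}$, which is the claimed inequality (with $\|\cdot\|$ read as the sup-norm $\|\cdot\|_{\mathfrak{B}}$). The only point that deserves explicit mention is that the contraction inequality may be applied at every step, which requires each iterate $\mathfrak{g}^{k}$ to lie in $\mathfrak{B}$; this is guaranteed because $\mathfrak{T}$ maps $\mathfrak{B}$ into $\mathfrak{B}$ by construction, so the iteration is well defined and $\mathfrak{g}^{k} \in \mathfrak{B}$ for all $k$.

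Honestly, there is no real obstacle here: the result is an immediate consequence of Proposition~\ref{prop:contraction} together with the fixed-point characterization of $\mathfrak{g}^{*}$, and the argument is the textbook stability estimate for Banach iterations. If anything, the only care needed is notational consistency (interpreting $\|\cdot\|$ as $\|\cdot\|_{\mathfrak{B}}$ and treating the statement ``$k = 1, \dots, \infty$'' as holding for every finite $k \geq 1$), rather than any genuine analytic difficulty.
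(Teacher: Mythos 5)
Your proposal is correct and is essentially the paper's own argument: the paper likewise combines the fixed-point identity $\mathfrak{g}^{*}=\mathfrak{T}(\mathfrak{g}^{*})$ with the contraction property of Proposition~\ref{prop:contraction}, writing the $k$-fold iteration compactly as $\|\mathfrak{T}^{k}\mathfrak{g}^{*}-\mathfrak{T}^{k}\mathfrak{g}^{0}\| \leq \gamma^{k}\|\mathfrak{g}^{*}-\mathfrak{g}^{0}\|$, which is exactly your one-step bound unrolled by induction. Your added remarks (that $\mathfrak{T}$ maps $\mathfrak{B}$ into $\mathfrak{B}$ so the iterates stay well defined, and that $\|\cdot\|$ means $\|\cdot\|_{\mathfrak{B}}$) are sound clarifications of details the paper leaves implicit.
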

\begin{proof}
$\|\mathfrak{g}^{*}-\mathfrak{g}^{k}\| = \|\mathfrak{T}\mathfrak{g}^{*}-\mathfrak{T}\mathfrak{g}^{k-1}\|=\|\mathfrak{T}^k\mathfrak{g}^{*}-\mathfrak{T}^k\mathfrak{g}^{0}\| \leq \gamma^k\|\mathfrak{g}^{*}-\mathfrak{g}^{0}\|$. 
\end{proof}

Let us define $\mathfrak{g}^{0} = 0$, and suppose we solve~\eqref{eq:cost2go_m1} recursively for $k=\tau$ iterations. We claim that $\mathfrak{g}^{\tau}$ provides the same optimal first-stage action obtained by the RH policy $\pi_\tau$. To see why this is the case, note that since $\mathfrak{g}^{0} = 0$, function $\mathfrak{g}^{1}$ is equivalent to the terminal stage value function $\Q_\tau(\cdot)$ obtained using the policy $\pi_\tau$. Similarly, since $\mathfrak{g}^{2}$ is the minimizer of $\E[f(x^2,\xi)+\gamma g^{1}(x^2)]$, we have that $\mathfrak{g}^{2}$ is equivalent to $\Q_{\tau-1}(\cdot)$. We can then use the same argument by doing backward induction to show the equivalence between $\mathfrak{g}^{k+1}$ and $\Q_{\tau-k}(\cdot)$ for $k=2, \dots, \tau-1$. This concludes that for a given initial state values $x_0$ and $\xi_1$, function $\mathfrak{g}^{\tau}$ is equivalent to the optimal first stage solution of problem~\eqref{eq:1stagepbm}. Moreover, the minimizer given by
\begin{equation}
  \label{eq:RH_tau_policy}
  \displaystyle x^\tau \in \argmin_{x'\in \X(x,\xi)}\left\{f(x', \xi) + \gamma g^{\tau-1}(x')\right\}
\end{equation}
is also a minimizer of~\eqref{eq:1stagepbm} for a given $x \in \X$ and $\xi \in \Xi$. 
\begin{thm}
\label{thm:bound}
Let $\gamma \in (0,1)$, suppose that $V(\pi_\infty)$ is the minimizer of~\eqref{eq:expec_tot_disc_cost} and $V(\pi_\tau)$ is given by~\eqref{eq:total_cost}, then under Assumption~\ref{assm:boundedness}, we have the following: 
\begin{enumerate}[label=(\alph*)]
  \item If $f(x, \xi) \leq 0$, then $V(\pi_\tau)-V(\pi_\infty)\leq \gamma^\tau\frac{\kappa}{1-\gamma}$.
  \item For any general $f(x, \xi): \X \times \Xi \to \mathbb{R}$, we have that $V(\pi_\tau)-V(\pi_\infty)\leq 2 \gamma^\tau \frac{\kappa}{1-\gamma}$.
\end{enumerate}
\end{thm}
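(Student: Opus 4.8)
The plan is to reduce the scalar gap $V(\pi_\tau)-V(\pi_\infty)$ to a sup-norm distance between value functions and then feed the contraction Lemma into it. First, since $\pi_\infty$ minimizes~\eqref{eq:expec_tot_disc_cost} over all policies, $V(\pi_\tau)-V(\pi_\infty)\ge 0$, so only an upper bound is needed. Next I use the equivalence established right before the theorem: $\pi_\tau$ is the stationary policy that is greedy with respect to $\mathfrak{g}^{\tau-1}$ (the $\tau$-th iterate of~\eqref{eq:cost2go_m1} started from $\mathfrak{g}^0=0$), and $\mathfrak{g}^*=V(\pi_\infty)$. Writing $\mathfrak{T}^{\pi_\tau}$ for the (linear, monotone, $\gamma$-contractive) evaluation operator of the fixed policy $\pi_\tau$, greediness gives $\mathfrak{T}^{\pi_\tau}\mathfrak{g}^{\tau-1}=\mathfrak{T}\mathfrak{g}^{\tau-1}=\mathfrak{g}^{\tau}$, while the value function $V^{\pi_\tau}$ of $\pi_\tau$ is the unique fixed point $\mathfrak{T}^{\pi_\tau}V^{\pi_\tau}=V^{\pi_\tau}$. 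It then suffices to bound $\|V^{\pi_\tau}-\mathfrak{g}^*\|_{\mathfrak{B}}$, since the scalar gap $V(\pi_\tau)-V(\pi_\infty)$ is dominated by this quantity (the initial stage is one further greedy step of the same stationary form).

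I would split $V^{\pi_\tau}-\mathfrak{g}^* = (V^{\pi_\tau}-\mathfrak{g}^{\tau})+(\mathfrak{g}^{\tau}-\mathfrak{g}^*)$. The second piece is immediate from Lemma~\ref{lem:contraction} with $\mathfrak{g}^0=0$: $\|\mathfrak{g}^{\tau}-\mathfrak{g}^*\|\le\gamma^{\tau}\|\mathfrak{g}^*\|$, and Assumption~\ref{assm:boundedness} gives $\|\mathfrak{g}^*\|\le\sum_{t\ge1}\gamma^{t-1}\kappa=\kappa/(1-\gamma)$, so $\|\mathfrak{g}^{\tau}-\mathfrak{g}^*\|\le\gamma^{\tau}\kappa/(1-\gamma)$. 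For the first piece I would iterate $\mathfrak{T}^{\pi_\tau}$ from $\mathfrak{g}^{\tau-1}$: since $V^{\pi_\tau}=\lim_n(\mathfrak{T}^{\pi_\tau})^n\mathfrak{g}^{\tau-1}$ and $\mathfrak{g}^{\tau}=\mathfrak{T}^{\pi_\tau}\mathfrak{g}^{\tau-1}$, telescoping together with the $\gamma$-contraction of $\mathfrak{T}^{\pi_\tau}$ gives $\|V^{\pi_\tau}-\mathfrak{g}^{\tau}\|\le\sum_{j\ge1}\gamma^{j}\|\mathfrak{g}^{\tau}-\mathfrak{g}^{\tau-1}\|=\frac{\gamma}{1-\gamma}\|\mathfrak{g}^{\tau}-\mathfrak{g}^{\tau-1}\|$. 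A final Lemma-style estimate bounds the one-step movement, $\|\mathfrak{g}^{\tau}-\mathfrak{g}^{\tau-1}\|\le\gamma^{\tau-1}\|\mathfrak{g}^{1}-\mathfrak{g}^{0}\|\le\gamma^{\tau-1}\kappa$ (using $\mathfrak{g}^0=0$ and $|f|\le\kappa$), so $\|V^{\pi_\tau}-\mathfrak{g}^{\tau}\|\le\gamma^{\tau}\kappa/(1-\gamma)$. Adding the two pieces yields part (b): $V(\pi_\tau)-V(\pi_\infty)\le 2\gamma^{\tau}\kappa/(1-\gamma)$.

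For part (a) I would exploit the sign of $f$ to kill the first piece entirely. When $f\le 0$, monotonicity of $\mathfrak{T}$ together with $\mathfrak{g}^1=\mathfrak{T}(0)=\E[\min_{x'}f(x',\xi)]\le 0=\mathfrak{g}^0$ produces a monotonically decreasing sequence $\mathfrak{g}^0\ge\mathfrak{g}^1\ge\cdots\ge\mathfrak{g}^*$; in particular $\mathfrak{g}^{\tau}\ge\mathfrak{g}^*$ and $\mathfrak{g}^{\tau}=\mathfrak{T}^{\pi_\tau}\mathfrak{g}^{\tau-1}\le\mathfrak{g}^{\tau-1}$. Feeding the latter inequality through the monotone operator $\mathfrak{T}^{\pi_\tau}$ in the same telescoping makes every term nonpositive, so $V^{\pi_\tau}\le\mathfrak{g}^{\tau}$; hence $V^{\pi_\tau}-\mathfrak{g}^*\le\mathfrak{g}^{\tau}-\mathfrak{g}^*\le\gamma^{\tau}\kappa/(1-\gamma)$, which is part (a).

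The main obstacle is precisely this first piece $V^{\pi_\tau}-\mathfrak{g}^{\tau}$: it is the difference between the \emph{true} infinite-horizon performance of the rolling-horizon policy and the finite value iterate it is built from, and a careless greedy-policy-loss argument (bounding the loss by $\frac{2\gamma}{1-\gamma}\|\mathfrak{g}^{\tau-1}-\mathfrak{g}^*\|$) would cost an extra factor $1/(1-\gamma)$ and only deliver $O\!\big(\gamma^{\tau}\kappa/(1-\gamma)^2\big)$. Obtaining the advertised single $1/(1-\gamma)$ hinges on telescoping the \emph{per-step} movement $\mathfrak{g}^{\tau}-\mathfrak{g}^{\tau-1}$ (which already carries the full $\gamma^{\tau-1}$ decay) rather than the cruder $\mathfrak{g}^{\tau-1}-\mathfrak{g}^*$, and, for the sharper constant in~(a), on the monotone sign-cancellation available when $f\le 0$. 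I would double-check that $\mathfrak{T}^{\pi_\tau}$ indeed inherits monotonicity and the $\gamma$-contraction~\eqref{eq:contraction} from the structure of~\eqref{eq:optimality_eq}, as both properties are used in the telescoping.
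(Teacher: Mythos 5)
Your proof is correct, but it takes a genuinely different route from the paper's, most notably in part (b). The paper proves (a) first: using $f\le 0$ it unrolls the recursion $\mathfrak{g}^{\tau}(x)=\E\left[f(x^\tau,\xi)+\gamma\mathfrak{g}^{\tau-1}(x^{\tau})\right]$ along the RH trajectory and lets the horizon of the partial sums go to infinity to conclude $V(\pi_\tau)\le\mathfrak{g}^{\tau}$ --- the same inequality you obtain in operator form via $V^{\pi_\tau}=\lim_n(\mathfrak{T}^{\pi_\tau})^n\mathfrak{g}^{\tau-1}\le\mathfrak{T}^{\pi_\tau}\mathfrak{g}^{\tau-1}=\mathfrak{g}^{\tau}$ from monotonicity --- and then applies Lemma~\ref{lem:contraction} with $\mathfrak{g}^0=0$ exactly as you do, so for (a) the two arguments differ only in machinery. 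For (b) the paper does \emph{not} redo any estimate: it reduces (b) to (a) by the shift $\bar f=f-\kappa$, which satisfies $-2\kappa\le\bar f\le 0$; this works because a constant shift of the cost leaves every greedy action (hence both policies) unchanged and moves every policy's value by the same constant $-\kappa/(1-\gamma)$, so the gap is preserved and (a) applies with $\kappa$ replaced by $2\kappa$. You instead prove (b) directly by splitting $V^{\pi_\tau}-\mathfrak{g}^{*}$ into the value-iteration error $\|\mathfrak{g}^{\tau}-\mathfrak{g}^{*}\|$ and the policy-evaluation error $\|V^{\pi_\tau}-\mathfrak{g}^{\tau}\|$, controlling the latter by telescoping the per-step movement $\|\mathfrak{g}^{\tau}-\mathfrak{g}^{\tau-1}\|\le\gamma^{\tau-1}\kappa$. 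The paper's shift trick is shorter but leaves the policy-invariance of the shift implicit; your decomposition is more self-contained, isolates the two error sources, and, as you observe, explains why the bound carries a single factor $1/(1-\gamma)$ rather than the $1/(1-\gamma)^2$ a generic greedy-policy-loss bound would give. Both your write-up and the paper's share the same mild looseness in identifying the scalar $V(\pi_\tau)$ with the value function of the stationary RH policy at the initial state, so no gap there relative to the paper's own standard of rigor.
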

\begin{proof}
\begin{enumerate}[label=(\alph*)]
  \item Since $x^\tau$ is the minimizer of~\eqref{eq:RH_tau_policy}, we have that
  \begin{equation}
 \label{eq:RH_tau_eval}
 \displaystyle \mathfrak{g}^{\tau}(x) = \E\left[f(x^\tau, \xi) + \gamma \mathfrak{g}^{\tau-1}(x^{\tau})\right], \ \forall \; x \in \X.
  \end{equation}
  Moreover, since $f(x,\xi) \leq 0$, we have that
  \begin{equation*}
  \mathfrak{g}^{\tau}(x)  \geq  \E\left[\sum_{t=1}^{k} \gamma^{t-1} f(x^\tau, \xi) + \gamma^{k}\mathfrak{g}^{\tau-1}(x^{\tau})\right], \ \forall \; x \in \X, \; k=1, 2, \dots. 
  \end{equation*}
  By letting $k \to \infty$, we get that 
  \begin{equation*}
  \mathfrak{g}^{\tau}(x) \geq \E\left[\sum_{t=1}^{\infty} \gamma^{t-1} f(x^\tau, \xi)\right] = V(\pi_\tau), \ \forall \; x \in \X .\\
  \end{equation*}
Therefore,
\begin{equation*}
 V(\pi_\tau) - V(\pi_\infty) =  V(\pi_\tau)-\mathfrak{g}^{*} \leq \mathfrak{g}^{\tau}-\mathfrak{g}^{*} \leq \gamma^\tau \|\mathfrak{g}^{*} - \mathfrak{g}^{0}\| \leq \gamma^\tau \|\sum_{t=1}^{\infty} \gamma^{t-1} f(x^\tau, \xi)\| \leq \gamma^\tau\frac{\kappa}{1-\gamma}.\\
\end{equation*}
\item Let us define another (immediate) cost function $\bar f(x, \xi) = f(x, \xi) - \kappa$. Since this newly defined function $-2\kappa \leq \bar f(x, \xi) \leq 0$, the result follows from (a) (by redefining $\kappa = 2\times \kappa$).
\end{enumerate}
\end{proof}
Theorem~\ref{thm:bound} provides a bound on the total expected discounted cost obtained by using the RH procedure as a function of the length of the forecast horizon $\tau$. This bound can be used to obtain the following formula for an $\epsilon$-sufficient forecast horizon $\tau^*_\epsilon$ 
\begin{equation}
  \label{eq:sufficient_tau}
  \tau^*_\epsilon = \frac{ \log{(\frac{\epsilon(1-\gamma)}{\kappa})}}{\log{(\gamma)}},
\end{equation}
which guarantees that, for all $\tau \geq \tau^*_\epsilon$, the suboptimality of the respective policy $\pi_{\tau}$ is within an optimality gap of $\epsilon$. Note that, similar bounds can also be constructed for the situation where the number of periods $m>1$. However, since the mapping $\mathfrak{g}$ is going to be defined for all the stages within a period $m$, the bound will scale by a factor $m$ for every increment in the forecast horizon $\tau$. 

While Theorem~\ref{thm:bound} does provide a cue on how to choose the number of look-ahead stages in the RH policies to perform in MSP problems, the formula for an $\epsilon$-sufficient forecast horizon $\tau^*_\epsilon$ provided in~\eqref{eq:sufficient_tau} indicates that
$\tau^*_\epsilon$ could grow quickly as $\gamma$ gets bigger and/or $\epsilon$ gets smaller. From a computational perspective, this is somewhat pathological and defeats the purpose of using an RH procedure -- now that constructing a policy $\pi_{\tau^*_\epsilon}$ with a relatively small $\epsilon$ and large $\gamma$ entails solving a sequence of MSP problems, each with a significantly large horizon $\tau^*$. Another concern with this bound is that its dependency on the original MSP problem is only through the (constant valued) upper bound $\kappa$ on the immediate cost function $f_t(\cdot,\cdot)$. This makes the choice of the employed $\epsilon$-sufficient forecast horizon to be \textit{static}, losing the opportunity to exploit the system state related information. In Section~\ref{sec:ADP_approach}, we exactly address these concerns by constructing a \textit{dynamic} policy which maps the state of the system $s_t$ to a sufficient forecast horizon $\tau^*_t,\; \forall t=1,\dots, T$. To that end, henceforth, we will introduce a subscript $t$ to distinguish between the static forecast horizon $\tau$ and the dynamic forecast horizon $\tau_t$ which is time dependant (via the state $s_t$).

\section{An Approximate Dynamic Programming Approach for Dynamic Look-ahead Policies}
\label{sec:ADP_approach}
In this section, we propose a solution approach for constructing a dynamic RH look-ahead policy from an ADP perspective. Although novel in stochastic programming, similar ideas to our proposed \textit{dynamic} look-ahead policy have been discussed in the MDP literature. For instance,~\cite{bes1986line} construct a dynamic look-ahead policy for the infinite horizon discounted MDP as follows. First, the authors show the existence of a finite sufficient forecast horizon $\tau^*$ for any given system state. Second, they identify the optimal first-stage decision $x^{\tau^*}$ which corresponds to this $\tau^*$. Finally, they propose an optimality criterion to this $x^{\tau^*}$, and then use this result to provide a procedure for detecting the smallest $\tau \in \mathbb{Z}_+$ such that $V(\pi_{\tau})= V(\pi_{\tau^*})$.

The high-level idea of the procedure described in~\cite{bes1986line} for detecting $\tau^*$ is to go through \textit{every} possible system state and \textit{increase} the length of the forecast horizon one stage at a time until $\tau^*$ is identified. To do this, for each step, the algorithm loops over \textit{every} feasible action, and solves a DP problem, where all actions which do not satisfy the (predetermined) optimality criterion for $x^{\tau^*}$ are eliminated. This process is repeated until the finite action space is reduced to a single element, in which case the corresponding forecast horizon is considered sufficient. Similar procedures have also been discussed in~\cite{hernandez1988forecast} for a more general class of MDPs.

Showing the existence of $\tau^*$ and finding the corresponding $x^{\tau^*}$ are somewhat readily accessible for the MDP setting because of the finiteness assumptions on the state/action space. While it seems challenging to extend this procedure in MSP problems with continuous state/action space, we expect that this idea and its featured theoretical results are extendable to MSP problems with pure discrete variables. However, even if such extensions are possible, since this procedure solves a DP for \textit{every} state, action, and $\tau\leq\tau^*$, the computation might scale poorly with the size of the state and action space. Instead, we propose a more practical approach next. 

\exclude{Ultimately, our goal is to construct a function which maps every state $s_t = S_t(x,\xi),\; \forall (x,\xi)\in \X\times\Xi$ at the beginning of every roll $t$ to the smallest forecast horizon $\tau^*_t$ which minimizes the total expected discounted cost. An explicit characterization of such function, therefore, entails solving the following stochastic dynamic optimal control problem:
\begin{equation*}
\min_{\tau_{1}\in\mathbb{N}}f(x_{1}^{\tau_{1,n}}, \xi_{1})+\gamma\E_{\xi_{2}} \left[\rule{0cm}{0.8cm}
\min_{\tau_{2}\in\mathbb{N}}f(x_{2}^{\tau_2}, \xi_{2})+\gamma\E_{\xi_{3}}\left[\rule{0cm}{0.6cm} 
\min_{\tau_{3} \in \mathbb{N}} f(x_{3}^{\tau_3}, \xi_{3})+\cdots \right]\right].
\end{equation*}
This problem, however, has a nested structure -- and more importantly, every decision $x_{t}^{\tau_t} \in \X_t,\; \forall \tau_t \in \mathbb{N}$ and $t=1, 2, \dots$, is itself an optimal solution to the MSP problem described in~\eqref{eq:RH_tau_policy}. This clearly makes the computation impractical, if not impossible. To that end, we consider the following ADP approach.}

\subsection{An approximate dynamic programming approach}
\label{subsec:ADP_approach}
Ultimately, our goal is to construct a function which maps every state $s_t = S_t(x_t,\xi_t),\; \forall (x_t,\xi_t)\in \X_t\times\Xi_t$ at the beginning of every roll $t$ to the smallest sufficient forecast horizon $\tau^*_t$. As previously noted, when using the RH procedure, at the beginning of every roll $t=1, 2, \dots$, the DM solves an MSP problem with a given forecast horizon, implements the first-stage \emph{only}, and repeats this process at the next roll $t+1$. As such, if the inclusion of additional stages in the forecast horizon (i.e., using $\tau'_t> \tau_t$ stages instead of $\tau_t$) does not influence the first-stage decision $x_{t}^{\tau_t}$ (i.e., $x_{t}^{\tau'_t}$ remains the same for all $\tau'_t\geq \tau_t$) then $\tau_t$ can be viewed \textit{sufficient} for this roll $t$. The main idea of our proposed approach is to train the dependency of a ``small and sufficient'' forecast horizon $\tau_t^*$ on the state using an offline training procedure. Specifically, we consider a regression model by treating the system states as the independent variables and treating the corresponding smallest sufficient forecast horizon as the dependent variables in the regression. First, we generate a random sample of the system states $\{s_n\}_{n=1}^{N}$. Then, for each sample $s_n$, we solve an MSP problem using different forecast horizons $\tau \in \{1,2, \dots, \tau^{\max}\}$, and we record the smallest forecast horizon $\tau^*$ for which the optimal first-stage solution $x^{\tau^*}_{1}$ ``converges'', i.e., it remains the same for all $\tau \geq \tau^*$. To practically determine whether or not $x^{\tau}_{1}$ will remain the same for all $\tau \geq \tau^*$ we use the following \emph{stability test}: we increase the length of the forecast horizon $\tau$ one step at a time, and we keep track of the optimal first-stage solution $x^{\tau}_{1}$ for every $\tau\in\{1,2, \dots, \tau^{\max}\}$. Then, whenever $\|x^{\tau}_{1}-x^{\tau-w}_{1}\|<\epsilon$ for some user-specified tolerance parameter $\epsilon > 0$ and stability parameter $w\in \mathbb{N}$, we conclude that $\tau_n^* = \tau-w$. Once all the data points $\{(s_n,\tau_{n}^*)\}_{n=1}^{N}$ has been collected, we use a real-valued function $\Tau : \X\times\Xi \to \mathbb{R}$ to fit these points. To do this, we define a set of \textit{basis functions} $\phi_0(\cdot), \dots, \phi_P(\cdot)$, and look for $\Tau(\cdot)$ in the finite dimensional space spanned by these functions. In other words, we look for $\Tau(\cdot)$ in the form of the following linear combination:
\begin{equation}
  \label{eq:basis_function}
  \Tau(s_t) := \sum_{i=0}^{P} \theta_i\phi_i(s_t),
\end{equation}
and fit a regression model to find the parameters $\theta_0, \dots, \theta_P$. We note that to find a good design of the basis functions, one needs to take advantage of specific problem structures. Given $\theta_0, \dots, \theta_P$ and the form of the basis functions, one can obtain the forecast horizon to consider for each state encountered online during the RH procedure. This approach is summarized in Algorithm~\ref{alg:ADP}.
 \begin{algorithm}[htbp]
\caption{An offline learning approach for dynamic selection of forecast horizons}
\label{alg:ADP}
\begin{algorithmic}[0]
\State \textbf{Input:} A sample size $N$, a sample $\{s_n\}_{n=1}^{N}$, a tolerance parameter $\epsilon > 0$, a stalling parameter $w \in \mathbb{N}$, and a maximum forecast horizon $\tau^{\max}<T$. 
\State \textbf{Step 0:} Initialize $n = 0$, and a set of candidate sufficient forecast horizons (one for each sample) $\{\tau^*_n\}_{n=1}^{N}$.
\State \textbf{Step 1:} Let $n \gets n+1$.
\State \textbf{Step 2:} Initialize a set of candidate optimal first-stage solutions $X^{*}_n = \emptyset$ and a candidate forecast horizon $\tau\in\{1,2, \dots, \tau^{\max}\}$.
\begin{itemize}
  \item \textbf{while} {\emph{true}} \textbf{do}
  \begin{enumerate}[label=(\alph*)]
 \item Given $s_n$ as an input, solve an MSP problem of the form~\eqref{eq:cost2go} with $T = \tau$ to obtain the optimal (first-stage) action $x^{\tau}_{1}$ and append $x^{\tau}_{1}$ to $X^{*}_n$.
 \item If $|X^{*}_n| > w$ go to (c). Otherwise, go to (d).
 \item If $\|x^{\tau}_{1}-x^{\tau-w}_{1}\|<\epsilon$, set $\tau_n^* = \tau-w$ and \textbf{stop}. Else, go to (d).
 \item If $\tau < \tau^{\max}$, let $\tau\gets \tau+1$ and go to (a). Else, set $\tau_{n}^* = \tau^{\max}$ and \textbf{stop}.
 \end{enumerate}
\end{itemize}
\State \textbf{Step 3:} If $n=N$, \textbf{return} $\{(s_n,\tau^*_n)\}_{n=1}^{N}$ and \textbf{stop}. Else, go to \textbf{Step 1}.
\State \textbf{Step 4:} Define $\Tau(s_t) := \sum_{i=0}^{P} \theta_i\phi_i(s_t)$ and fit a regression model to find $\theta_0, \dots, \theta_P$ using $\{(s_n,\tau^*_n)\}_{n=1}^{N}$.
\end{algorithmic}
\end{algorithm}

Next, we illustrate the proposed approach on a class of \textit{hydrothermal energy planning} problems, which are used as the benchmark instances in our numerical experiments shown in Section~\ref{sec:numerical_results}.

\subsection{An illustrating example: the hydrothermal power operation planning problem}
\label{subsec:HPOP}
The hydrothermal power generation system is one of the main energy sources in many countries. For example, the Brazilian hydrothermal power system is a large scale network of facilities that can be used to produce and distribute energy by circulating H$_2$O fluids (water). The power plants in the Brazilian network can be categorized into two types:
\begin{enumerate}
\item A set of hydro plants $H$, which has no production cost, but for each hydro plant $h \in H$ there is an upper limit $\bar q_h$, which is the maximum allowed amount of turbined flow that can be used for power generation. These hydro plants can also be categorized further into two types:
\begin{enumerate}
\item Hydro plants with reservoirs $H_R$, and for each $h \in H_R$ there is an upper and lower limit on the level of water allowed in the reservoirs, denoted by $\underline{v}_{h, t}$ and $\bar v_{h, t}$, respectively.
\item Hydro plants without reservoirs (run-of-river) $H_I$, and thus no water storage is possible.
\end{enumerate}
Moreover, for each hydro plant $h$, a unit of released water will generate $r_h$ units of power, also known as efficiency rate. The set of immediate upper and lower stream plants for hydro plant $h$ in the network is given by $U(h)$ and $L(h)$, respectively.
\item A set of thermal plants $F$, where for each thermal plant $f \in F$, the minimum and maximum amount of  power generation allowed is $\underline{g}_{f, t}$ and $\bar g_{f, t}$, respectively, $\forall \; t =1, \dots, T$.  Additionally, each plant $f \in F$ is associated with an operating cost of $c_{f,t}$ for each unit of thermal power generated. We denote the cost vector of generating thermal power at time $t$ by $\vec{c}_{g, t}$.
\end{enumerate}

In the hydrothermal power operation planning problem (HPOP) problem, the goal of the DM is to find an operation strategy $\pi$, in order to meet production targets (demands) $d_{t}$ for each decision stage $t, \; \forall \; t=1, \dots, T$, while minimizing the overall production cost. To do this, at each decision stage $t$ and given the state of the system $s_t$ (i.e., the water level at each hydro plant $h \in H_R$), the DM has to decide on how much water to turbine by each hydro plant $h \in H$ and how much thermal power to generate by each plant $f \in F$. In the event where the levels of energy production using hydro/thermal plants does not meet the levels of demand, the DM must pay a penalty of $c_{p,t}$ for each unit of unsatisfied demand. Such penalty can also be thought of as a cost of buying energy from the outside market, which typically has a higher cost compared to the cost of generating energy using internal resources (thermal plants), i.e., $c_{p,t}  > \max_{f\in F}\{c_{f,t}\}$. 

The HPOP problem can be modeled as an MSP problem because of the uncertainty in the problem data, such as future inflows (amount of rainfall) $\tilde b_t := \vec{\tilde b}_{t} (\xi_t)$, demand $d_t := \vec{\tilde d}_{t}(\xi_t)$, fuel costs $\tilde c_t := \vec{\tilde c}_{t}(\xi_t)$ and equipment availability~\cite{shapiro2011report}. We will use the notation $b_{t}, d_{t}, c_{t}$ whenever the parameter is deterministic (e.g.,  stage $t=1$) and $\tilde b_{t}, \tilde d_{t}, \tilde c_{t}$ whenever it is random. To introduce a mathematical programming formulation for the decision problem which the DM has to solve at every decision stage $t$, for $t=1, \dots, T$, consider the following decision variables:
\begin{itemize}
\item $\vec{x}_t \in \mathbb{R}^{H_R}_{+}$: where $x_{h, t}$ is the amount of water stored at each hydro plant with a reservoir $h \in H_R$.
\item $\vec{y}_t \in \mathbb{R}^{H}_{+}$: where $y_{h, t}$ is the amount of water turbined by each hydro plant $h \in H$.
\item $\vec{g}_t \in \mathbb{R}^{F}_{+}$: where $g_{f, t}$ is the amount of thermal power generated by each thermal plant $f \in F$.
\item $\vec{v}^{+}_t, \vec{v}^{-}_t \in \mathbb{R}^{H_R}_{+}$: where $v^{+}_{h, t}, v^{-}_{h, t}$ is the amount of spilled/pumped-back water (without generating power) in hydro plant $h \in H_R$
\item $p_t \in \mathbb{R}_{+}$: the amount of unsatisfied demand at stage $t$. 
\end{itemize}
See Table~\ref{tab:HPOP_notation} in the appendix for a summary of all the notations used in the HPOP problem. The local optimization problem at every decision stage $t$, for $t = 1, \dots, T$ is defined as follows:
\begin{subequations}\label{eq:HPOP}
\begin{footnotesize}
\begin{align}
\min_{\vec{x}_t,\vec{y}_t,\vec{g}_t, \vec{v}^{+}_t, \vec{v}^{-}_t, p_t} \quad & \vec{\tilde c}_{g, t}^{\intercal} \vec{g}_t + \tilde c_{p, t} p_t\\
\textrm{s.t.} \quad & x_{h, t} = x_{h,t-1} + \tilde b_{h, t} + \Bigg[\sum_{m \in U(h)} (y_{m, t} + v^{+}_{m, t})-\sum_{m \in L(h)} v^{-}_{m, t} \Bigg] - (y_{h, t} + v^{+}_{h, t}-v^{-}_{h, t}), \quad \forall \; h \in H_R \label{eq:HPOP-1} \\
  &  y_{h, t} + v^{+}_{h, t}-v^{-}_{h, t} = \tilde b_{h, t} + \Bigg[\sum_{m \in U(h)} (y_{m, t} + v^{+}_{m, t})-\sum_{m \in L(h)} v^{-}_{m, t} \Bigg],\quad \forall \; h \in H_I \label{eq:HPOP-2} \\
  & \sum_{h \in H} y_{h, t}r_h + \sum_{f\in F} g_{f, t} + p_t \geq d_t \label{eq:HPOP-3} \\
  & \underline{v}_{h, t} \leq x_{h, t} \leq \bar v_{h, t}, \quad \; \forall h \in H_R \label{eq:HPOP-4} \\
  & y_{h, t} \leq \bar q_h, \quad \forall \; h \in H \label{eq:HPOP-5} \\
  & \underline{g}_{f, t} \leq g_{f, t} \leq \bar g_{f, t}, \quad \; \forall f \in F \label{eq:HPOP-6} \\
  & \vec{x}_t,\vec{y}_t,\vec{g}_t, \vec{v}^{+}_t, \vec{v}^{-}_t, p_t \geq 0. \label{eq:HPOP-7}
\end{align}
\end{footnotesize}
\end{subequations}

Constraints~\eqref{eq:HPOP-1} describe the flow balance in the system, which gives the (post-decision) water level $x_{h,t}$ stored in a reservoir $h\in H_{R}$ at the end of stage $t$. Constraints~\eqref{eq:HPOP-2} describe the system dynamics in the run-of-river and it is similar to the first set except that the state variables representing the water level in the reservoirs are excluded. Constraint~\eqref{eq:HPOP-3} models the demand requirement. Constraints~\eqref{eq:HPOP-4} to constraints~\eqref{eq:HPOP-6} are the capacity constraints for the reservoirs, the amount of water that can be turbined, and the thermal power generation, respectively. Note that in this context, the system state is given by $s_{h,t} = S_t(x_{h,t-1},\tilde b_{h,t})= x_{h,t-1} + \tilde b_{h,t}$ for a hydro plant with a reservoir $h \in H_{R}$ and $s_{h,t} = \tilde b_{h,t}$ for each run-of-river hydro plant $h \in H_{I}$.
\exclude{
We next explain the intuition behind our proposed heuristic approach for identifying the dynamic look-ahead policy on the HPOP problem. Let us consider the extreme case where all reservoirs are empty. In this situation, no matter how long the forecast horizon the DM uses, $x^*_{h,t} = 0,\; \forall h \in H$ and the DM can only hope to minimize the current stage \emph{immediate} cost. The other extreme case is when all the reservoirs are at their full capacity. We want argue that there exists a future \textit{limiting} period where all of current level of resources (water in the reservoir) will be exhausted. In this case, from the perspective of keeping the current level of resources to (help) satisfy the future demands, the situation becomes similar to the empty reservoir. This because, looking beyond this limiting period is no longer meaningful since all of the current resources have been exhausted. To argue the existence of such limiting period, assume for the sake of contradiction that such period does note exist. This implies that, the (random) amounts of inflow (rainfall) at the beginning of every subsequent period are \textit{always} sufficient, on their own, to meet all the demand. Otherwise, we would have to consume (at least some of) the amounts of water stored in the reservoirs. The situation where the (random) amounts of inflow (rainfall) at the beginning of every subsequent period are \textit{always} sufficient to meet the demand correspond to uninteresting optimization problem. Whereas, the second situation correspond to (at least) marginal reduction in the current level of resources. Given that all the reservoirs have \textit{finite} capacities, accumulating these marginal reductions over time will ultimately lead to all the resources being used which contradicts the assumptions that such limiting period does not exist.}
\exclude{{\color{blue} On the other hand, when all the reservoirs are at their full capacity, there is a finite amount future demands that the current level of resources (water in the reservoir) can help satisfy before it runs out. Therefore, unless the (random) amounts of inflow (rainfall) at the beginning of every subsequent period are always sufficient to meet all the demand (in which case the problem is uninteresting, anyway), there must come a period $t+\tau_t^*$ in the future where all the water, stored from this current stage, vanishes -- in which case, the situation is similar to the empty reservoir. Therefore, when such period comes, we expect $x^{\tau_t}_t$ to remain the same for all $\tau_t \geq \tau_t^*$. (I know what you are saying here, but some language was confusing. Please rewrite this highlighted part.)}}

In our implementation, to construct the mapping $\Tau(\cdot)$ from the system state to a sufficient forecast horizon for this HPOP problem, we consider a piecewise linear function where each piece is given by~\eqref{eq:basis_function} with $P=1$. The basis functions for each piece are defined as: $\phi_0(\vec{s}_t) = 1$ (following the ADP literature for numerical stability, see, e.g., \cite{nasrollahzadeh2018real}) and $\phi_1(\vec{s}_t) = \sum_{h\in H} r_h s_{h,t}$. In other words, we define $\phi_1(\vec{s}_t)$ as the total amount of electricity that can be generated by all the hydro plants given their current levels of water in the reservoir. As such, performing the regression now reduces to estimating parameters $\theta_0$ and $\theta_1$ for each piece of the piecewise linear function through an offline training procedure. 

\section{Numerical Experiments}
\label{sec:numerical_results}
In this section, we present our computational results on the empirical performance of different RH policies. In Section~\ref{subsec:implementation} we provide an overview of the implementation details of the different RH policies used in our numerical experiments. We then describe the test instances in Section~\ref{subsec:test_data}. Afterwards, we compare the performance of various approaches considered in the experiment in Section~\ref{subsec:results}. Finally, we present some sensitivity analysis on some key parameters used in the proposed algorithms in Section~\ref{subsec:sensitivity_analysis}.

\subsection{Implementation details}
\label{subsec:implementation}
We implemented two different policies for choosing the forecast horizon in the RH procedure: a \emph{static} policy where the forecast horizon is kept the same in each roll, and a \emph{dynamic} policy where the forecast horizon is dynamically chosen based on the system state according to the regression model trained offline as described in Algorithm~\ref{alg:ADP}. To train this function, we use a sample size of $N = 50$, a tolerance parameter $\epsilon = 0.001\%$, a stalling parameter $w=10$, and a maximum forecast horizon $\tau^{\max} = 64$ (we chose $64$ as the maximum forecast horizon due to limited computational budget and the observation that the policy performance does not improve significantly beyond that). We refer to this procedure as the \emph{offline} training step. We then evaluate both the static and the dynamic RH policies on a sample path with $T = 5000$ stages as the out-of-sample evaluation. We refer to this step as the \emph{online} evaluation step. To measure the performance of each policy, we calculate the long-run average cost which is given by
\begin{equation}
\label{eq:long_run_avg}
\bar z = \frac{1}{T}\displaystyle \sum_{t=1}^{T} f_t(x^{\tau_t}_{t},\xi_t).
\end{equation}

It might seem computationally prohibitive to consider evaluating the out-of-sample performance on a sample path of $T = 5000$ stages for different policies to choose the number of look-ahead stages in each roll, static or dynamic. However, the situation simplifies dramatically under our stationarity assumption (see Assumption~\ref{assm:stationarity}) where the expected cost-to-go functions $\Q_t(\cdot)$ for the MSPs defined in every roll are identical for any fixed length of the forecast horizon. In this case, one can then \emph{reuse} the approximate value function obtained in previous rolls in the current roll as an initial approximate value function, instead of redefining a new MSP problem and training the value function starting from scratch. Since the problems solved in previous rolls were solved under initial states that may not be necessarily the same as the one observed in the current roll, the approximate value function inherited from previous rolls might not be a good enough approximation under the new initial state in the current roll, and therefore need to be evaluated and trained further if necessary. Note that an algorithmic procedure similar to that described in Algorithm~\ref{alg:RH-procedure} can be employed here. The only difference is that, as we move forward over time, we do not dispose of the cuts generated in previous rolls. Instead, we carry them over to the next roll and then resolve the problem under the new initial state $s_t$ (if needed). This idea of ``reusing'' the approximate value functions has appeared in the literature (see, e.g.,~\cite{silva2019data}).

We care to remark the following subtle distinction between reusing the approximate value functions in the static policy vs. dynamic policy for choosing the number of look-ahead stages in the RH procedure. In the static policy, a single MSP problem is defined and the same approximate value functions in every stage are simply reused over the course of the entire procedure. Whereas in the dynamic policy, we keep track of multiple MSP problems (and the associated approximate value functions) one for each forecast horizon $\tau_t$. If the forecast horizon $\tau_t$ given by $\Tau(\vec{s}_t)$ at the beginning of every roll has been observed before, we reuse the corresponding (previously defined) MSP problem which has the same $\tau_t$ value. Otherwise, a new MSP problem is defined and is added to the list of MSP problems with the corresponding forecast horizon encountered so far.

In addition, as previously noted, discount factors have many economical interpretations where the DM accounts for the time value of the costs. One such interpretation is that the future cost is considered less valuable than the current cost. Meaning that, the smaller the discount factor, the less the DM values future costs and vice versa. It is intuitively clear that for each discount factor, there exists a certain length of the look-ahead horizon, both of which represent how the DM values future costs. To show this analogy, we implemented the periodic variant of the SDDP algorithm described in~\cite{shapiro2019stationary} using different values for the discount factor $\gamma \in \{0.1, 0.2, \dots, 0.9, 0.95, 0.99\}$ and made a comparison with RH policies. 

In summary, we compare the performance of the following policies:
\begin{enumerate}
  \item A \emph{static} RH policy that reuses the cuts generated during the online evaluation steps with no discount factor.
  \item A \emph{dynamic} RH policy that reuses the cuts generated during the online evaluation steps with no discount factor.
  \item A stationary policy trained with a given discount factor using the periodic variant of the SDDP algorithm described in~\cite{shapiro2019stationary}.
\end{enumerate}

In all of our experiments, we consider varying the number of sample paths used in each iteration of the SDDP algorithm for solving the MSP problems. However, we observe that using a single sample path per forward/backward step works the best, and thus we use it in all of our experiments. We use the following termination criteria for stopping the SDDP algorithm: (i) a maximum number of $10^5$ iterations is achieved; or (ii) a time limit of 10,800 seconds (three hours) is achieved; or (iii) the lower bound does not progress by more than $\epsilon$ (in the relative term) in $\bar{j}\in \mathbb{Z}_+$ consecutive iterations (i.e., $\frac{(\text{LB}^i- \text{LB}^{i-\bar{j}})}{\text{LB}^i}<\epsilon$). We refer to $\bar{j}$ as the ``stalling parameter'', which is set to $\bar{j} = 500$ by default. When solving the MSP problems in an RH procedure during the out-of-sample evaluation, we gradually tune down the online training effort by decreasing this parameter $\bar{j}$ to accelerate computation without sacrificing the solution quality by much. The rationale behind this dynamic parameter setting is that, as the online evaluation procedure proceeds, the state space is explored more extensively, implying that the quality of the approximate value functions keeps improving. Therefore, it does not worth the same effort to do online training in later rolls as initial rolls. Specifically, we gradually tune down the online training effort by doing the following: 
\begin{enumerate}
  \item Starting from the second roll ($t=2$), we set $\bar{j}=50$.
  \item Then, once all of the realizations of the random data process $\xi_t$ are encountered in the online evaluation procedure, we set $\bar{j}=10$.
  \item Finally, if the algorithm keeps getting terminated as soon as the stalling parameter $\bar{j}=10$ is hit (i.e., right after $i=11$ iterations) for more than 50 consecutive rolls, we turn off the online training by setting $\bar{j}=1$.
\end{enumerate}
We remark that all parameters above are chosen in a rather heuristic manner and can be tuned according to specific problem instances to improve the computational performance. 

\subsection{Test instances}\label{subsec:test_data}
All proposed approaches are tested using benchmark instances motivated by the Brazilian HPOP problem described in Section~\ref{subsec:ADP_approach}. To create a variety of instances, we consider different values for the demand parameter $d_t \in \{1000, 1500, 1750, 2000, 2250\}$, number of hydro plants in the network $|H| \in \{1, 3, 6\}$, number of realizations $|\Xi_t| \in \{5, 12\}$ and probability distribution of the random process $\xi_t$ as shown in Table~\ref{tab:5_real_dist} and Table~\ref{tab:12_real_dist} (in the appendix). In these instances, hydro plants that have reservoirs correspond to $H_R = \{1,3,4\}$, and the run-of-river hydro plants are $H_I = \{2,5,6\}$. The problem  parameters used in formulation~\eqref{eq:HPOP} are given in Table~\ref{tab:hydro_data} (in the appendix). Note that in instances where $|H|=1$, we chose the third hydro plant ($h=3$) and in instances where $|H|=3$, we chose the second, third and fourth hydro plants. In addition to the hydro plants, we also consider a set of four thermal plants $F$ with maximum power generation capacity $\bar g_{f,t}$ and cost $c_{f,t}$ for each thermal plant $f \in F$ as shown in Table~\ref{tab:thermo_data} (in the appendix). Finally, we use $c_{p,t} = 500$ as the penalty parameter for each unit of unsatisfied demand and $c_0 = 2.592$ as the parameter for converting the amount of water flow into the water level in the reservoirs.

All algorithms were implemented in \emph{Julia} 1.4.0, using \emph{JuMP} 0.18.4 package \cite{dunning2017jump}, with commercial solver \emph{Gurobi}, version 9.0.0. All of the tests were conducted on Clemson University's primary high-performance computing cluster, the \emph{Palmetto} cluster, where we use an \emph{R830 Dell Intel Xeon} ``big memory'' compute node with 2.60GHz, 1.0 TB memory, and 24 cores.
 
\subsection{Numerical Results}
\label{subsec:results}
The first step in our implementation for the dynamic RH policy is the \emph{offline} training step where we estimate parameters $\theta_0$ and $\theta_1$ in each piece (see~\eqref{eq:basis_function}) of the piecewise linear function. The choice of using a piecewise linear function as the regression function is inspired by our preliminary observations that, in these instances, the relationship between $s^n$ and $\tau^n$ exhibit a piecewise linear structure with up to three pieces. The best fit results are shown in Table~\ref{tab:decomposed_fit}.
\begin{table}
\begin{footnotesize}
\begin{center}
\begin{tabular}{|c|c|c|ccc|cc|c|c|}
\hline 
$|H|$ & $\tilde d_t$ & $|\Xi_t|$ & \multicolumn{3}{c|}{\textbf{Range}} & $\hat \theta_0$ & $\hat \theta_1$& $R^2$& $R^2_{\text{avg}}$ \\\hline 
\multirow{12}{*}{1} & \multirow{6}{*}{1000} & \multirow{3}{*}{5} & $0\leq$  & $\phi_1(\vec{s}_t)$ & $< 3100$ & -6.69 & $1.00\times 10^{-2}$ & 66.36\% & \multirow{3}{*}{88.61\%} \\
& & & $3100\leq$  & $\phi_1(\vec{s}_t)$ & $< 14900$ & -1.59 & $1.23\times 10^{-3}$ & 99.48\% & \\
& & & $14900\leq$  & $\phi_1(\vec{s}_t) $ &  & 5.00& 0.00 & - & \\ \cline{3-10} 
& & \multirow{3}{*}{12} & $0\leq$  & $\phi_1(\vec{s}_t)$ & $<  3100$ & -14.61  & $2.04\times 10^{-2}$ & 80.06\% & \multirow{3}{*}{93.19\%} \\
& & & $3100\leq$  & $\phi_1(\vec{s}_t)$ & $<  13000$ & -1.95 & $1.41\times 10^{-3}$ & 99.49\% & \\
& & & $13000\leq$  & $\phi_1(\vec{s}_t) $ &  & 4.00& 0.00 & - & \\ \cline{2-10} 
& \multirow{6}{*}{1500} & \multirow{3}{*}{5} & $0\leq$  & $\phi_1(\vec{s}_t)$ & $<  3000$ & -10.61  & $1.27\times 10^{-2}$ & 63.08\% & \multirow{3}{*}{85.57\%} \\
& & &$3000\leq$  & $\phi_1(\vec{s}_t)$ & $<11500$ & -3.86 & $2.00\times 10^{-3}$ & 99.38\% & \\
& & & $11500\leq$  & $\phi_1(\vec{s}_t) $ &  & -0.81 & $6.90\times 10^{-4}$ & 94.24\% & \\ \cline{3-10} 
& & \multirow{3}{*}{12} & $0\leq$  & $\phi_1(\vec{s}_t)$ & $<  2500$& -1.00 & $2.90\times 10^{-3}$ & 87.50\% & \multirow{3}{*}{94.57\%} \\
& & & $2500\leq$  & $\phi_1(\vec{s}_t)$ & $<  9300$ & -5.33 & $2.55\times 10^{-3}$ & 99.00\% & \\
& & & $9300\leq$  & $\phi_1(\vec{s}_t) $ &  & -0.74 & $7.35\times 10^{-4}$ & 97.22\% & \\ \hline 
\multirow{6}{*}{3} & \multirow{4}{*}{1750} & \multirow{2}{*}{5} & $0\leq$  & $\phi_1(\vec{s}_t)$ & $< 8000$ & 5.08 & $1.16\times 10^{-3}$ & 24.95\% & \multirow{2}{*}{56.35\%} \\
& & & $8000 \leq$  & $\phi_1(\vec{s}_t) $ &  & -1.32 & $1.18\times 10^{-3}$ & 87.74\% & \\ \cline{3-10} 
& &\multirow{2}{*}{12} & $3100\leq$  & $\phi_1(\vec{s}_t)$ & $<  13000$ & 0.70 & $1.81\times 10^{-3}$ & 77.27\% & \multirow{2}{*}{84.27\%} \\
& & & $13000\leq$  & $\phi_1(\vec{s}_t) $ &  & 1.07 & $8.44\times 10^{-4}$ & 91.27\% & \\ \cline{2-10} 
& \multirow{2}{*}{2250} & 5& $0\leq$  & $\phi_1(\vec{s}_t) $ &  & 0.49 & $1.04\times 10^{-3}$ & 84.52\% & 84.52\%\\ \cline{3-10} 
& & 12 & $0\leq$  & $\phi_1(\vec{s}_t) $ &  &0.76 & $8.59\times 10^{-4} $ & 83.87\% & 83.87\% \\ \hline  
\multirow{12}{*}{6} & \multirow{6}{*}{2000} & \multirow{3}{*}{5} & $0\leq$  & $\phi_1(\vec{s}_t)$ & $< 4500$ & -0.55  & $4.07\times 10^{-3}$ & 73.96\% & \multirow{3}{*}{91.17\%} \\ 
& & & $4500\leq$  & $\phi_1(\vec{s}_t)$ & $< 13000$ & -5.40  & $2.34\times 10^{-3}$ & 99.53\% & \\ 
& & & $13000\leq$  & $\phi_1(\vec{s}_t) $ &  & -7.21  & $1.47\times 10^{-3}$ & 83.20\% & \\ \cline{3-10} 
& & \multirow{3}{*}{12} & $0\leq$  & $\phi_1(\vec{s}_t)$ & $<  3500$ & 6.19 & $2.80\times 10^{-3}$ & 23.27\% & \multirow{3}{*}{74.22\%} \\
& & & $3500\leq$  & $\phi_1(\vec{s}_t)$ & $<  11500$ & -5.85  & $2.61\times 10^{-3}$ & 99.38\% & \\
& & & $11500\leq$  & $\phi_1(\vec{s}_t) $ &  & -8.03  & $1.60\times 10^{-3}$ & 60.84\% & \\ \cline{2-10} 
& \multirow{6}{*}{2500} & \multirow{3}{*}{5} & $0\leq$  & $\phi_1(\vec{s}_t)$ & $<  4600$ & -6.72  & $4.63\times 10^{-3}$ & 90.78\% & \multirow{3}{*}{90.55\%} \\ 
& & &$4600\leq$  & $\phi_1(\vec{s}_t)$ & $<10100$ & -6.44  & $2.63\times 10^{-3}$ & 99.35\% & \\ 
& & & $10100\leq$  & $\phi_1(\vec{s}_t) $ &  & -13.48 & $2.06\times 10^{-3}$ & 81.53\% & \\ \cline{3-10} 
& & \multirow{3}{*}{12} & $0\leq$  & $\phi_1(\vec{s}_t)$ & $<  4300$ & -14.59 & $7.69\times 10^{-3}$ & 80.35\% & \multirow{3}{*}{86.78\%} \\ 
& & & $4300\leq$  & $\phi_1(\vec{s}_t)$ & $<  9200$ & -8.36  & $3.15\times 10^{-3}$ & 98.45\% & \\ 
& & & $2300\leq$  & $\phi_1(\vec{s}_t) $ &  & -5.61  & $1.53\times 10^{-3}$ & 81.54\% & \\ \hline
\end{tabular}
\end{center}
\caption{The regression fit for the piecewise linear basis function parameters $\hat \theta_0$ and $\hat \theta_1$, and the \textit{(aggregated) coefficient of determination} ($R^2_{avg}$) $R^2$.}
\label{tab:decomposed_fit}
\end{footnotesize}
\end{table}
To measure the quality of the obtained regression functions, we use an \textit{aggregated coefficient of determination} $R^2_{\text{avg}}$, which is given by a weighted summation of the $R^2$ values over all pieces representing the function. For example, in the test instance where $|H|=1, d_t = 2250$ and $|\Xi_t| = 5$, the piecewise linear regression function contains three pieces with an average $R^2_{\text{avg}} = 85.57\%$ (the $R^2$ values are $63.08\%, 99.38\%$, and $94.24\%$ respectively for each piece), which is almost sixteen times higher than the value when we use a simple linear regression (i.e., with a single piece). Similar improvements in terms of the $R^2_{\text{avg}}$ values can be observed for most instances, except two instances where $|H|=3$ and $d_t = 2250$, where the coefficients of determinations given by the simple linear regression are already satisfactory.

To analyze the performance of different RH look-ahead policies and the stationary policy, we record the following performance metrics:
\begin{itemize}
  \item The training \textbf{time} (in \emph{seconds}): which is recorded \emph{online} in the case of the RH policies, and \emph{offline} in the case of the stationary policy obtained by the periodic variant of the SDDP algorithm.
  \item The \textbf{long-run average cost}: $\bar z$ given by~\eqref{eq:long_run_avg} which is denoted by $\bar z_\tau$ in the case of an RH policy with $\tau$ look-ahead stages, and $\bar z_\gamma$ in the case of stationary policy with discount factor $\gamma$. 
  \item The \emph{relative} \textbf{gap} which is given by $(\bar z_\tau-\bar z_{64})/\bar z_{64}\%$ in the case of RH policies and is given by $(\bar z_\gamma-\bar z_{0.99})/\bar z_{0.99}\%$. We treat $\bar{z}_{64}$ as the ``optimal'' RH policy as the length of the look-ahead horizon is large enough in the sense that the policy performance does not improve significantly beyond that, according to our computational experiments. We treat $\bar z_{0.99}$ as the ``optimal'' stationary policy as the discount factor of $0.99$ is close enough to $1$.
\end{itemize}

All of the numerical results for instances where $|H|=1$ are shown in Table~\ref{tab:RH_numerical_results} to Table~\ref{tab:discount_numerical_results_6H}. Next, we summarize our main observations.

\begin{table}[htbp]
\begin{footnotesize}
\begin{center}
\begin{adjustbox}{width=\textwidth}
\begin{tabular}{|c|c|c|ccc|c|ccc|c|ccc|}
\hline
\multirow{2}{*}{$|\Xi_t|$} & \multirow{2}{*}{$\tau$} & \multicolumn{4}{c|}{$|H|=1$}  & \multicolumn{4}{c|}{$|H|=3$}  & \multicolumn{4}{c|}{$|H|=6$} \\ \cline{3-14} 
 & & $d_t$ & \textbf{Time} & \textbf{$\bar z_\tau$} & \textbf{Gap} & $d_t$ & \textbf{Time} & \textbf{$\bar z_\tau$} & \textbf{Gap} & $d_t$ & \textbf{Time} & \textbf{$\bar z_\tau$} & \textbf{Gap} \\ \hline
\multirow{8}{*}{5}   & 1 & \multirow{16}{*}{1000} & 3.65 & 13365.99  & 40.38\% & \multirow{16}{*}{1750} & 3.72 & 74337.5 & 110.71\% & \multirow{16}{*}{2000} & 14.22 & 47455.92  & 269.12\% \\  
 & 2 &  & 5.43 & 11417.63  & 19.92\% &  & 7.69 & 52646.73  & 49.23\% &  & 17.06 & 21520.59  & 67.39\% \\  
 & 4 &  & 18.29 & 10148.72  & 6.59\%  &  & 60.51 & 41181.76  & 16.73\% &  & 131.58 & 16273.22  & 26.58\% \\  
 & 8 &  & 114.5 & 9724.37 & 2.13\%  &  & 509.18 & 38010.33  & 7.74\%  &  & 1524.64  & 14342.81  & 11.56\% \\  
 & 16 &  & 693.13 & 9546.52 & 0.27\%  &  & 2020.42  & 36264.1 & 2.79\%  &  & 6194.63  & 13385.28  & 4.11\% \\  
 & 32 &  & 2305.26  & 9534.69 & 0.14\%  &  & 5641.59  & 35415.53  & 0.38\%  &  & 15972.43 & 12956.27  & 0.78\% \\  
 & 64 &  & 6462.9 & 9521.29 & - &  & 13567.29 & 35279.98  & - &  & 34677.73 & 12856.51  & - \\  
 & dynamic   &  & 333.99 & 9576.75 & 0.58\%  &  & 661.78 & 37755.31  & 7.02\%  &  & 2584.86  & 13199.63  & 2.67\% \\ \cline{1-2} \cline{4-6} \cline{8-10} \cline{12-14} 
\multirow{8}{*}{12}  & 1 &  & 3.71 & 11613.72  & 42.33\% &  & 11.02 & 87358.81  & 96.39\% &  & 14.14 & 40188.54  & 235.38\% \\  
 & 2 &  & 9.24 & 10167.24  & 24.60\% &  & 12.75 & 60440.18  & 35.88\% &  & 32.05 & 19429.49  & 62.14\% \\  
 & 4 &  & 39.78 & 8958.41 & 9.79\%  &  & 257.2 & 51222.79  & 15.16\% &  & 421.47 & 14799.7 & 23.51\% \\  
 & 8 &  & 326.72 & 8440.7  & 3.44\%  &  & 1891.9 & 47664.75  & 7.16\%  &  & 3368.73  & 13268.63  & 10.73\% \\  
 & 16 &  & 1816.15  & 8221.67 & 0.76\%  &  & 5216.76  & 45670.92  & 2.67\%  &  & 9991.14  & 12431.27  & 3.74\% \\  
 & 32 &  & 5566.49  & 8165.87 & 0.08\%  &  & 12876.73 & 44659.88  & 0.40\%  &  & 24691.71 & 12068.17  & 0.71\% \\  
 & 64 &  & 15372.1  & 8159.69 & - &  & 32105.21 & 44481.48  & - &  & 59671.12 & 11982.87  & - \\  
 & dynamic   &  & 1809.69  & 8314.8  & 1.90\%  &  & 715.67 & 48737.93  & 9.57\%  &  & 4816.96  & 12387.48  & 3.38\% \\ \hline
\multirow{8}{*}{5}   & 1 & \multirow{16}{*}{1500} & 3.64 & 65637.12  & 62.16\% & \multirow{16}{*}{2250} & 3.72 & 222334.09 & 19.82\% & \multirow{16}{*}{2250} & 14.93 & 93311.6 & 274.95\% \\  
 & 2 &  & 5.52 & 51512.1 & 27.26\% &  & 7.22 & 194007.97 & 4.55\%  &  & 19.1 & 44648.11  & 79.41\% \\  
 & 4 &  & 24.36 & 44917.91  & 10.97\% &  & 40.67 & 187064.07 & 0.81\%  &  & 140.71 & 32492.63  & 30.56\% \\  
 & 8 &  & 164.29 & 42523.65  & 5.05\%  &  & 278.58 & 185591.37 & 0.02\%  &  & 1441.57  & 28113.77  & 12.97\% \\  
 & 16 &  & 731.55 & 41178.16  & 1.73\%  &  & 931.33 & 185404.47 & -0.08\% &  & 5428.05  & 26128.32  & 4.99\% \\  
 & 32 &  & 2413.37  & 40540.81  & 0.16\%  &  & 2165.64  & 185404.47 & -0.08\% &  & 14387.64 & 25113.81  & 0.91\% \\  
 & 64 &  & 6947.46  & 40477.72  & - &  & 2752.87  & 185560.35 & - &  & 32775.48 & 24886.25  & - \\  
 & dynamic   &  & 497.22 & 41195.23  & 1.77\%  &  & 38.36 & 186375.7  & 0.44\%  &  & 1928.97  & 27090.02  & 8.86\% \\ \cline{1-2} \cline{4-6} \cline{8-10} \cline{12-14} 
\multirow{8}{*}{12}  & 1 &  & 3.75 & 58454.13  & 56.89\% &  & 3.7  & 246632.4  & 6.41\%  &  & 14.16 & 83937.73  & 259.96\% \\  
 & 2 &  & 9.43 & 45440.69  & 21.96\% &  & 12.32 & 231867.49 & 0.04\%  &  & 31.81 & 40306.08  & 72.85\% \\  
 & 4 &  & 58.07 & 40601.52  & 8.97\%  &  & 64.43 & 231628.54 & -0.06\% &  & 473.26 & 29854.98  & 28.03\% \\  
 & 8 &  & 422.79 & 38515.49  & 3.37\%  &  & 247.52 & 231628.54 & -0.06\% &  & 3205 & 26178.52  & 12.27\% \\  
 & 16 &  & 1891.91  & 37602.58  & 0.92\%  &  & 418.06 & 231628.54 & -0.06\% &  & 10452.26 & 24357.1 & 4.45\% \\  
 & 32 &  & 5301.44  & 37267.23  & 0.02\%  &  & 891.85 & 231764.99 & 0.00\%  &  & 26822.58 & 23508.5 & 0.81\% \\  
 & 64 &  & 13494.45 & 37259.01  & - &  & 1050.57  & 231764.99 & - &  & 65099.79 & 23318.46  & 0.00\% \\  
 & dynamic   &  & 962.55 & 37716.6 & 1.23\%  &  & 40.95 & 231628.54 & -0.06\% &  & 4672.58  & 25418.94  & 9.01\% \\ \hline
\end{tabular}
\end{adjustbox}
\end{center}
\caption{Numerical results for the different RH look-ahead policies (with no discount factor).} 
\label{tab:RH_numerical_results}
\end{footnotesize}
\end{table}

\subsubsection{The plateauing effect and exponential growth in computational time. }
\label{subsubsec:plateauing}
The first observation is that in static RH policies, across different instances, the long-run average cost $\bar z_\tau$ \emph{plateaus} after a certain length of forecast horizon $\tau^*$. In other words, the decrease in $\bar z_\tau$ as $\tau$ increases becomes hardly noticeable after $\tau > \tau^*$. This behavior can be observed in Figure~\ref{fig:longrun_avg_cost_1H_static_without_offline} to Figure~\ref{fig:longrun_avg_cost_6H_static_without_offline} in the appendix. Across all of the instances, this plateauing behavior seems to take place somewhere between $8 \leq \tau^* \leq 16$ where $\bar z_\tau$ is around $2.21\%$, $2.52\%$, and $4.33\%$ larger than $\bar z_{64}$ on average, for instances where $|H|=1$, $|H|=3$ and $|H|=6$, respectively. On the other hand, we can see that static RH policies with $\tau \in [8, 16]$ require, on average, a computational time of 770.13 seconds, 1439.22 seconds, and 8016.52 seconds for instances where $|H|=1$, $|H|=3$ and $|H| = 6$, respectively. Comparing these to the static RH policy with $\tau = 64$, we can see that on average, it requires a computational time of 10569.23 seconds, 12368.98 seconds, and 42374.78 seconds for instances where $|H|=1$, $|H|=3$ and $|H|=6$, respectively. This means that for a $2.21\%$, $2.52\%$ and $4.33\%$ reduction in the optimality gap, using a static RH policy with $\tau = 64$ instead of $\tau \in [8, 16]$ would require a $92.71\%$, $88.36\%$ and $83.19\%$ increase in the computational time, on average, for instances where $|H|=1$, $|H|=3$ and $|H|=6$, respectively.

\subsubsection{Static vs. dynamic RH policies.}
\label{subsubsec:static_vs_dynamic}
Overall, across all of the instances, the dynamic RH policy performs quite well compared to the benchmark static RH policy with $\tau =64$. At its \emph{worst}, the $\bar z_{\text{dynamic}}$ is $9.57\%$ larger than $\bar z_{64}$, which happens in the instance where ($|H|=3, d_t = 1750$, $|\Xi_t|=12$), followed by the instance where ($|H|=6, d_t = 2250$, $|\Xi_t|=12$) and ($|H|=3, d_t = 1750$, $|\Xi_t|=5$), where the corresponding $\bar z_{\text{dynamic}}$ is $9.01\%$ and $7.02\%$ larger than $\bar z_{64}$, respectively. It is interesting to see that these three instances also happen to be the ones with the least accurate regression fit according to the (aggregated) coefficient of determination as shown in Table~\ref{tab:decomposed_fit}. Nevertheless, apart from these three instances, on average, $\bar z_{\text{dynamic}}$ is only around $1.37\%$ larger than $\bar z_{64}$ in the instances where $|H|=1$, $0.19\%$ larger in the remaining instances where $|H|=3$, and $4.97\%$ larger in the remaining instances where $|H|=6$. Comparing these gaps to the computational time required to compute the static RH policy with $\tau = 64$, we can see from Table~\ref{tab:RH_numerical_results} that for a $1.37\%$, $4.42\%$, and $5.98\%$ reduction in the optimality gap, it would require $91.48\%$, $97.06\%$, and $92.82\%$ increase in the computational time, for the instances where $|H|=1$, $|H|=3$ and $|H|=6$, respectively.
  
Moreover, given the monotonicity of $\bar z_\tau$ as $\tau$ increases, it might be instructive to explore where the dynamic policy ranks (in terms of its performance) compared to static RH policies with various lengths of forecast horizons. In nine out of the twelve instances that we have, the long-run average cost and the computational time by the dynamic policy are both placed between the static RH policies with $\tau = 8$ and $\tau = 16$. Even in instances where this is not the case -- namely, the instances ($|H|=3, d_t = 1750$, $|\Xi_t|=12$), ($|H|=3, d_t = 2250$, $|\Xi_t|=5$) and ($|H|=6, d_t = 2000$, $|\Xi_t|=5$) -- the performance of the dynamic RH policy, in terms of $\bar z_{\text{dynamic}}$ and $\text{Time}_{\text{dynamic}}$, places very closely to policies with $\tau =8$ in the case of the two instances with $|H|=3$ and very closely to policies with $\tau =16$ in the case of the instance with $|H|=6$. This time interval is, as discussed earlier, the same interval at which the plateauing behavior starts to take place. This coincidence, however, is somewhat expected given the fact that our regression functions are constructed precisely so that the corresponding states are mapped to the \emph{smallest} forecast horizon $\tau^*$ for which $\|a^\tau - a^{\tau^*}\| < \epsilon$ and hence $\|\bar z_{\tau} - \bar z_{\tau^*}\| < \epsilon$, i.e. when $\bar z_{\tau}$ plateaus. This provides a numerical evidence on the validity of our proposed approach.  
  
Finally, another advantage to this dynamic policy over the static policy is its \emph{usability}. To elaborate on this, it is important to note that when using a static policy, it is not clear how one can identify a sufficient forecast horizon $\tau^*$ a priori, for which $\bar z_{\tau}$ plateaus for all $\tau \geq \tau^*$. In our implementation, for instance, we do this by keeping track of $\bar z_\tau$ and testing the static RH policy with every $\tau \in \{1, 2, \dots, 64\}$. In other words, if the DM chooses to use a static policy, then prior to solving the problem he/she does not know what $\tau^*$ is, and the only way to find out is to solve the \emph{actual} problem for multiple values of $\tau$. While the offline training step involved in the estimation of the dynamic policy regression parameter, in some sense, also follows this same enumeration process, it is less expensive from a computational perspective and allows for more flexibility. First, when enumerating the values $\tau \in \mathbb{Z}_+$, we do not have to solve the \emph{actual} problem for every $t=1, 2, \dots, T$. Instead, we \emph{only} need to do this for the first stage problem under different sampled initial states $s^n$. Moreover, the accuracy of the regression fit can be adapted according to the sample size $N$ which depends on the available computational budget. Finally, the DM has the opportunity to study and analyze function $\Tau(s_t)$ prior to solving the \emph{actual} problem, e.g., he/she has the freedom to construct the regression functions that fit the set of sample initial states $s^{n}$ and the corresponding responses the best.
  
\subsubsection{RH policies vs. stationary policy.} 
\label{subsubsec:RH_vs_periodic}
From Table~\ref{tab:discount_numerical_results_1H} to Table~\ref{tab:discount_numerical_results_6H}, we can see that stationary policy trained with a given discount factor using the periodic variant of the SDDP algorithm performs quite well compared to the RH policies. Specifically, if we pick the policies corresponding to $\gamma = 0.99$ values in every instance and compare them to $\bar z_{64}$, we can see that $\bar z_\gamma$ is \emph{only} around $1.01\%, 1.41\%$, and $15.71\%$ larger than $\bar z_{64}$ on average, for instances where $|H|=1$, $|H|=3$ and $|H| = 6$, respectively. More importantly, the computational time required in the stationary policy is $66.50\%$, $73.15\%$, and $91.74\%$ less, on average, than that of the static RH policies with $\tau=64$, for instances where $|H|=1$, $|H|=3$ and $|H|=6$, respectively.
\begin{table}[htbp]
\begin{footnotesize}
\begin{center}
\begin{tabular}{|c|c|c|lcc|c|}
\hline
\textbf{$d_t$}  & \textbf{$|\Xi_t|$}  & \textbf{$\gamma$} & \textbf{Time} & \textbf{$\bar z_\gamma$} & \textbf{$(\bar z_\gamma - \bar z_{0.95})/\bar z_{0.95}\%$} & \multicolumn{1}{l|}{\textbf{$(\bar z_\gamma - \bar z_{64})/\bar z_{64}\%$}} \\ \hline
\multirow{22}{*}{1000} & \multirow{11}{*}{5}  & 0.10 & 0.19    & 12684.41 & 21.90\% & 33.22\%\\ 
                       &                      & 0.20 & 0.37    & 12684.41 & 21.90\% & 33.22\%\\ 
                       &                      & 0.30 & 0.53    & 12684.41 & 21.90\% & 33.22\%\\ 
                       &                      & 0.40 & 0.64    & 12684.41 & 21.90\% & 33.22\%\\ 
                       &                      & 0.50 & 0.80    & 12684.41 & 21.90\% & 33.22\%\\ 
                       &                      & 0.60 & 1.27    & 12684.41 & 21.90\% & 33.22\%\\ 
                       &                      & 0.70 & 1.62    & 10341.58 & 4.20\%  & 8.62\%\\
                       &                      & 0.80 & 2.91    & 10051.84 & 1.44\%  & 5.57\%\\
                       &                      & 0.90 & 14.94   & 9521.41  & -4.05\% & -    \\ 
                       &                      & 0.95 & 54.17   & 9513.41  & -4.14\% & -    \\ 
                       &                      & 0.99 & 1934.00 & 9906.97  & -       & 4.05\%  \\ \cline{2-7} 
                       &\multirow{11}{*}{12}  & 0.10 & 0.21    & 11346.83 & 27.98\% & 39.06\% \\
                       &                      & 0.20 & 0.41    & 11346.83 & 27.98\% & 39.06\%\\ 
                       &                      & 0.30 & 0.58    & 11346.83 & 27.98\% & 39.06\%\\ 
                       &                      & 0.40 & 0.77    & 11346.83 & 27.98\% & 39.06\%\\ 
                       &                      & 0.50 & 1.03    & 11004.30 & 25.74\% & 34.86\%\\ 
                       &                      & 0.60 & 1.72    & 10675.27 & 23.45\% & 30.83\%\\ 
                       &                      & 0.70 & 2.27    & 8845.99  & 7.62\%  & 8.41\%\\
                       &                      & 0.80 & 3.63    & 8571.37  & 4.66\%  & 5.05\%\\
                       &                      & 0.90 & 22.39   & 8216.27  & 0.54\%  & 0.69\%\\
                       &                      & 0.95 & 97.13   & 8188.86  & -       & -    \\ 
                       &                      & 0.99 & 5391.60 & 8171.77  & -       & -       \\ \hline
\multirow{22}{*}{1500} & \multirow{11}{*}{5}  & 0.10 & 0.23    & 65651.50 & 38.17\% & 62.19\%\\ 
                       &                      & 0.20 & 0.46    & 58755.27 & 30.91\% & 45.15\%\\ 
                       &                      & 0.30 & 0.54    & 58647.44 & 30.78\% & 44.89\%\\ 
                       &                      & 0.40 & 0.70    & 54744.80 & 25.85\% & 35.25\%\\ 
                       &                      & 0.50 & 0.69    & 46043.49 & 11.83\% & 13.75\%\\ 
                       &                      & 0.60 & 1.02    & 43352.34 & 6.36\%  & 7.10\%\\
                       &                      & 0.70 & 1.48    & 42521.26 & 4.53\%  & 5.05\%\\
                       &                      & 0.80 & 2.39    & 41663.21 & 2.56\%  & 2.93\%\\
                       &                      & 0.90 & 12.14   & 40584.36 & -       & -    \\ 
                       &                      & 0.95 & 47.02   & 40617.51 & -       & -    \\ 
                       &                      & 0.99 & 2185.29 & 40594.81 & -       & -       \\ \cline{2-7} 
                       & \multirow{11}{*}{12} & 0.10 & 0.26    & 58461.04 & 36.37\% & 56.90\%\\ 
                       &                      & 0.20 & 0.52    & 58461.04 & 36.37\% & 56.90\%\\ 
                       &                      & 0.30 & 0.59    & 51964.42 & 28.42\% & 39.47\%\\ 
                       &                      & 0.40 & 0.86    & 47407.99 & 21.54\% & 27.24\%\\ 
                       &                      & 0.50 & 0.95    & 45398.62 & 18.07\% & 21.85\%\\ 
                       &                      & 0.60 & 1.57    & 40330.77 & 7.77\%  & 8.24\%\\
                       &                      & 0.70 & 2.03    & 43005.42 & 13.51\% & 15.42\%\\ 
                       &                      & 0.80 & 2.95    & 37759.45 & 1.49\%  & 1.34\%\\
                       &                      & 0.90 & 16.13   & 37305.85 & -       & -    \\ 
                       &                      & 0.95 & 72.57   & 37269.70 & -       & -    \\ 
                       &                      & 0.99 & 4651.74 & 37196.90 & -       & -       \\ \hline
\end{tabular}
\end{center}
\caption{Numerical results for the stationary policy (with discount factor $\gamma$) for $|H|=1$.} 
\label{tab:discount_numerical_results_1H}
\end{footnotesize}
\end{table}

\begin{table}[htbp]
\begin{footnotesize}
\begin{center}
\begin{tabular}{|c|c|c|lcc|c|}
\hline
\textbf{$d_t$}  & \textbf{$|\Xi_t|$}  & \textbf{$\gamma$} & \textbf{Time} & \textbf{$\bar z_\gamma$} & \textbf{$(\bar z_\gamma - \bar z_{0.95})/\bar z_{0.95}\%$} & \multicolumn{1}{l|}{\textbf{$(\bar z_\gamma - \bar z_{64})/\bar z_{64}\%$}} \\ \hline
\multirow{22}{*}{1750} & \multirow{11}{*}{5}  & 0.10 & 0.25    & 72329.48  & 48.80\% & 105.02\%\\ 
                       &                      & 0.20 & 0.50    & 61305.31  & 39.60\% & 73.77\%\\
                       &                      & 0.30 & 0.56    & 58958.97  & 37.19\% & 67.12\%\\
                       &                      & 0.40 & 0.78    & 45323.61  & 18.30\% & 28.47\%\\
                       &                      & 0.50 & 0.75    & 42397.72  & 12.66\% & 20.18\%\\
                       &                      & 0.60 & 1.15    & 37542.00  & 1.36\%  & 6.41\%\\ 
                       &                      & 0.70 & 1.70    & 36528.44  & -1.37\% & 3.54\%\\ 
                       &                      & 0.80 & 2.55    & 35771.06  & -3.52\% & 1.39\%\\ 
                       &                      & 0.90 & 13.38   & 36459.97  & -1.56\% & 3.34\%\\ 
                       &                      & 0.95 & 43.60   & 37036.00  & -       & 4.98\%\\ 
                       &                      & 0.99 & 1938.69 & 37029.61  & -       & 4.96\%   \\ \cline{2-7} 
                       & \multirow{11}{*}{12} & 0.10 & 0.27    & 87359.15  & 48.74\% & 96.39\% \\
                       &                      & 0.20 & 0.54    & 87359.15  & 48.74\% & 96.39\%\\
                       &                      & 0.30 & 0.67    & 75697.62  & 40.84\% & 70.18\%\\
                       &                      & 0.40 & 0.91    & 75597.00  & 40.76\% & 69.95\%\\
                       &                      & 0.50 & 1.03    & 55065.02  & 18.67\% & 23.79\%\\
                       &                      & 0.60 & 1.65    & 47649.75  & 6.01\%  & 7.12\%\\ 
                       &                      & 0.70 & 2.67    & 47572.81  & 5.86\%  & 6.95\%\\ 
                       &                      & 0.80 & 4.36    & 45281.33  & 1.10\%  & 1.80\%\\ 
                       &                      & 0.90 & 19.38   & 44618.47  & -       & -     \\ 
                       &                      & 0.95 & 77.11   & 44853.49  & -       & 0.84\%\\ 
                       &                      & 0.99 & 5021.77 & 44783.72  & -       & 0.68\%   \\ \hline
\multirow{22}{*}{2250} & \multirow{11}{*}{5}  & 0.10 & 0.23    & 222307.24 & 16.62\% & 19.80\%\\
                       &                      & 0.20 & 0.47    & 222307.24 & 16.62\% & 19.80\%\\
                       &                      & 0.30 & 0.59    & 209145.23 & 11.38\% & 12.71\%\\
                       &                      & 0.40 & 0.79    & 208001.60 & 10.89\% & 12.09\%\\
                       &                      & 0.50 & 0.86    & 190872.27 & 2.89\%  & 2.86\%\\ 
                       &                      & 0.60 & 1.41    & 187446.56 & 1.12\%  & 1.02\%\\ 
                       &                      & 0.70 & 2.39    & 186434.33 & 0.58\%  & -     \\ 
                       &                      & 0.80 & 3.58    & 185552.76 & -       & -     \\ 
                       &                      & 0.90 & 15.25   & 185434.50 & -       & -     \\ 
                       &                      & 0.95 & 55.98   & 185467.83 & -       & -     \\ 
                       &                      & 0.99 & 1985.85 & 185351.59 & -       & - \\ \cline{2-7} 
                       & \multirow{11}{*}{12} & 0.10 & 0.26    & 246633.48 & 6.09\%  & 6.42\%\\ 
                       &                      & 0.20 & 0.53    & 246633.48 & 6.09\%  & 6.42\%\\ 
                       &                      & 0.30 & 0.67    & 244842.70 & 5.41\%  & 5.64\%\\ 
                       &                      & 0.40 & 0.84    & 244842.70 & 5.41\%  & 5.64\%\\ 
                       &                      & 0.50 & 1.06    & 231806.51 & -       & -     \\ 
                       &                      & 0.60 & 1.88    & 231710.85 & -       & -     \\ 
                       &                      & 0.70 & 3.25    & 231691.90 & -       & -     \\ 
                       &                      & 0.80 & 5.45    & 231691.90 & -       & -     \\ 
                       &                      & 0.90 & 26.18   & 231690.23 & -       & -     \\ 
                       &                      & 0.95 & 100.28  & 231691.90 & -       & -     \\ 
                       &                      & 0.99 & 4337.70 & 231602.68 & -       & - \\ \hline

\end{tabular}
\end{center}
\caption{Numerical results for the stationary policy (with discount factor $\gamma$) for $|H|=3$.} 
\label{tab:discount_numerical_results_3H}
\end{footnotesize}
\end{table}

\begin{table}[htbp]
\begin{footnotesize}
\begin{center}
\begin{tabular}{|c|c|c|lcc|c|}
\hline
\textbf{$d_t$}  & \textbf{$|\Xi_t|$}  & \textbf{$\gamma$} & \textbf{Time} & \textbf{$\bar z_\gamma$} & \textbf{$(\bar z_\gamma - \bar z_{0.99})/\bar z_{0.99}\%$} & \multicolumn{1}{l|}{\textbf{$(\bar z_\gamma - \bar z_{64})/\bar z_{64}\%$}} \\ \hline
\multirow{22}{*}{2000} & \multirow{11}{*}{5}  & 0.10 & 0.34    & 43322.32 & 67.14\%  & 236.97\%\\ 
                       &                      & 0.20 & 0.68    & 31772.28 & 55.19\%  & 147.13\%\\ 
                       &                      & 0.30 & 0.77    & 24250.31 & 41.30\%  & 88.62\%\\
                       &                      & 0.40 & 1.01    & 18149.80 & 21.56\%  & 41.17\%\\
                       &                      & 0.50 & 0.98    & 16971.14 & 16.12\%  & 32.00\%\\
                       &                      & 0.60 & 1.51    & 14068.05 & -1.19\%  & 9.42\%\\ 
                       &                      & 0.70 & 1.97    & 13288.72 & -7.13\%  & 3.36\%\\ 
                       &                      & 0.80 & 3.25    & 12938.78 & -10.02\% & 0.64\%\\ 
                       &                      & 0.90 & 16.13   & 13218.13 & -7.70\%  & 2.81\%\\ 
                       &                      & 0.95 & 60.61   & 14188.20 & - & 10.36\%\\
                       &                      & 0.99 & 2405.77 & 14235.84 & - & 10.73\%\\ \cline{2-7} 
                       & \multirow{11}{*}{12} & 0.10 & 0.37    & 32429.25 & 62.25\%  & 170.63\% \\ 
                       &                      & 0.20 & 0.74    & 29770.94 & 58.88\%  & 148.45\%\\ 
                       &                      & 0.30 & 0.92    & 16466.87 & 25.65\%  & 37.42\%\\
                       &                      & 0.40 & 1.12    & 14525.55 & 15.72\%  & 21.22\%\\
                       &                      & 0.50 & 1.40    & 15746.90 & 22.25\%  & 31.41\%\\
                       &                      & 0.60 & 2.15    & 13932.88 & 12.13\%  & 16.27\%\\
                       &                      & 0.70 & 3.25    & 12764.11 & 4.09\%   & 6.52\%\\ 
                       &                      & 0.80 & 4.96    & 12134.38 & -0.89\%  & 1.26\%\\ 
                       &                      & 0.90 & 24.48   & 11997.69 & -2.04\%  & -     \\ 
                       &                      & 0.95 & 97.90   & 12175.00 & -0.55\%  & 1.60\%\\ 
                       &                      & 0.99 & 5250.73 & 12242.54 & - & 2.17\%   \\ \hline
\multirow{22}{*}{2250} & \multirow{11}{*}{5}  & 0.10 & 0.40    & 77787.89 & 56.53\%  & 212.57\%\\ 
                       &                      & 0.20 & 0.80    & 52850.73 & 36.02\%  & 112.37\%\\ 
                       &                      & 0.30 & 0.79    & 48205.23 & 29.86\%  & 93.70\%\\
                       &                      & 0.40 & 0.96    & 34184.51 & 1.09\%   & 37.36\%\\
                       &                      & 0.50 & 1.09    & 32246.83 & -4.86\%  & 29.58\%\\
                       &                      & 0.60 & 1.50    & 27511.85 & -22.90\% & 10.55\%\\
                       &                      & 0.70 & 2.17    & 25430.95 & -32.96\% & 2.19\%\\ 
                       &                      & 0.80 & 3.27    & 24990.33 & -35.30\% & -     \\ 
                       &                      & 0.90 & 15.90   & 26020.24 & -29.95\% & 4.56\%\\ 
                       &                      & 0.95 & 56.71   & 29517.01 & -14.55\% & 18.61\%\\
                       &                      & 0.99 & 1558.71 & 33812.70 & - & 35.87\%  \\ \cline{2-7} 
                       & \multirow{11}{*}{12} & 0.10 & 0.40    & 73726.60 & 63.92\%  & 216.17\%\\ 
                       &                      & 0.20 & 0.79    & 45434.47 & 41.45\%  & 94.84\%\\
                       &                      & 0.30 & 0.91    & 41025.18 & 35.16\%  & 75.93\%\\
                       &                      & 0.40 & 1.14    & 36716.93 & 27.55\%  & 57.46\%\\
                       &                      & 0.50 & 1.34    & 30739.56 & 13.47\%  & 31.82\%\\
                       &                      & 0.60 & 1.97    & 25950.83 & -2.50\%  & 11.29\%\\
                       &                      & 0.70 & 3.15    & 24211.81 & -9.86\%  & 3.83\%\\ 
                       &                      & 0.80 & 5.02    & 23548.44 & -12.96\% & 0.99\%\\ 
                       &                      & 0.90 & 25.96   & 23562.13 & -12.89\% & 1.04\%\\ 
                       &                      & 0.95 & 99.43   & 24407.57 & -8.98\%  & 4.67\%\\ 
                       &                      & 0.99 & 4784.02 & 26599.80 & - & 14.07\%  \\ \hline
\end{tabular}
\end{center}
\caption{Numerical results for the stationary policy (with discount factor $\gamma$) for $|H|=6$.} 
\label{tab:discount_numerical_results_6H}
\end{footnotesize}
\end{table}
It is also interesting to see, as shown in Figure~\ref{fig:longrun_avg_costH_discount} to Figure~\ref{fig:longrun_avg_cost_6H_discount} in the appendix, that the performance of the stationary policy exhibits a plateauing behavior (as $\gamma$ increases) similar to that of the static RH policies (as $\tau$ increases). Although not with the same monotonicity rate, it is important to note that when implementing the periodic variant of the SDDP algorithm under the \emph{stationarity} assumption (Assumption~\ref{assm:stationarity}) with $m=1$, the (infinite horizon) MSP reduces to a static problem. This leads to the delicate issue of how to choose the trial points during the forward pass of the SDDP algorithm, which should depend on the length of the horizon $T$. Approximating $T=\infty$ using a finite horizon leads to an error which is a consequence of the so-called end-of-horizon effect. Moreover, when $\gamma$ is close to 1, the convergence of the algorithm becomes slow since the needed $T$ can be large. To deal with this, in our implementation of the periodic variant of the SDDP algorithm, we treat $T$ as a random variable following a \emph{geometric} distribution with a success probability $p = (1-\gamma)$. Then, at the beginning of every iteration $i$, we sample a forecast horizon $T^i$, a sample path $\xi^i = (\xi^i_1, \dots, \xi^i_{T^i})$ and implement the forward/backward pass along $\xi^i$. 
Note that the sampling error coming from sampling $T^i$ is what might be causing the non-monotonicity of $\bar z_\gamma$ as $\gamma$ increases in our numerical results.

Finally, we investigate how the $\epsilon-$sufficient forecast horizon $\tau^*_{\epsilon}$ suggested by Theorem~\ref{thm:bound} (see also~\eqref{eq:sufficient_tau}) compared to the average length of forecast horizon used by the dynamic RH policy $\bar \tau = \sum_{t=1}^T\tau_t/T$ over the entire sample path in the out-of-sample test, as shown in Table~\ref{tab:dynamic_vs_theory}. Note that the upper bound $\kappa$ on the immediate cost function $f_t(\cdot,\cdot)$ is calculated as the optimal objective value of problem~\eqref{eq:HPOP} with state variables $s_{h,t} = 0, \; \forall h \in H$. From Table~\ref{tab:dynamic_vs_theory}, we see that $\bar \tau$ has an average value of around $10.1, 5.5$, and $11.47$ for instances when $|H| = 1, |H| = 3$, and $|H| = 6$, respectively. These numbers are not only significantly smaller than those of the $\tau^*_{\epsilon}$ corresponding to the different discount factors computed via Theorem~\ref{thm:bound} and equation~\eqref{eq:sufficient_tau}, but also consistent with the observations made in Subsection~\ref{subsubsec:static_vs_dynamic} regarding where the dynamic policy ranks (in terms of its performance) compared to static RH policies with various lengths of forecast horizons. 

\begin{table}[htbp]
\begin{footnotesize}
\begin{center}
\begin{adjustbox}{width=\textwidth}
\begin{tabular}{|c|c|c|c|ccccccccccc|}
\hline 
\multirow{3}{*}{$|H|$} & \multirow{3}{*}{$\tilde d_t$} & \multirow{3}{*}{$\bar \tau$} & \multirow{3}{*}{$\kappa$} & \multicolumn{11}{c|}{$\gamma$}                                                              \\ \cline{5-15} 
                       &                               &                              &                           & 0.10  & 0.20  & 0.30  & 0.40  & 0.50  & 0.60  & 0.70  & 0.80   & 0.90   & 0.95   & 0.99    \\ \cline{5-15} 
                       &                               &                              &                           & \multicolumn{11}{c|}{$\tau_{\gamma}$}                                                       \\ \hline
\multirow{2}{*}{1}     & 1000                          & 8.93                         & 53000                     & 9.77  & 14.05 & 18.89 & 24.99 & 33.30 & 45.63 & 66.15 & 107.56 & 234.37 & 494.93 & 2686.09 \\
                       & 1500                          & 11.27                        & 260500                    & 10.46 & 15.04 & 20.22 & 26.73 & 35.60 & 48.74 & 70.62 & 114.69 & 249.49 & 525.98 & 2844.53 \\ \hline
\multirow{2}{*}{3}     & 1750                          & 8.42                         & 385500                    & 10.63 & 15.28 & 20.54 & 27.16 & 36.17 & 49.51 & 71.72 & 116.45 & 253.20 & 533.62 & 2883.52 \\
                       & 2250                          & 2.58                         & 635500                    & 10.85 & 15.59 & 20.96 & 27.71 & 36.89 & 50.49 & 73.12 & 118.69 & 257.95 & 543.36 & 2933.26 \\ \hline
\multirow{2}{*}{6}     & 2000                          & 12.26                        & 412000                    & 10.66 & 15.33 & 20.60 & 27.23 & 36.26 & 49.64 & 71.90 & 116.75 & 253.84 & 534.91 & 2890.14 \\
                       & 2250                          & 10.68                        & 537000                    & 10.78 & 15.49 & 20.82 & 27.52 & 36.64 & 50.16 & 72.65 & 117.93 & 256.35 & 540.08 & 2916.50 \\ \hline 
\end{tabular}
\end{adjustbox}
\end{center}
\caption{The average number of forecast horizon in the dynamic policy $\bar \tau = \sum_{t=1}^T\tau_t/T$ and the $\epsilon-$sufficient forecast horizon $\tau^*_{\epsilon}$ obtained by~\eqref{eq:sufficient_tau}.} 
\label{tab:dynamic_vs_theory}
\end{footnotesize}
\end{table}

\subsection{Sensitivity analysis}
\label{subsec:sensitivity_analysis}
In this section, we present a sensitivity analysis to assess the robustness of the solution with respect to different values that we use for the \textit{stalling} parameter $w$. As previously noted, the \textit{stalling} parameter $w$ is what we use to determine whether or not $x^{\tau}_1$ will remain the same for all $\tau \geq \tau^*$, during the offline training step of our proposed approach for the dynamic RH policy (see also \textbf{Step} 3.3 in Algorithm~\ref{alg:ADP}). In Table~\ref{tab:sensitivity_analysis}, in addition to the results obtained by using the stalling parameter $w=10$ used in our original implementation, we also performed the same procedure using $w \in \{5,15\}$. As it is intuitive to consider $w=15$ as the stalling parameter with the \textit{most stable} solution, we use it as the benchmark. Similarly to the analysis presented in Subsection~\ref{subsec:results}, we also report (i) the training \textbf{time}$_w$ (in \emph{seconds}) for $w=15$; (ii) the \textbf{long-run average cost}: $\bar z_w$ given by~\eqref{eq:long_run_avg} for $w=15$; and (iii) the \emph{relative} \textbf{gap} in \textbf{time}$_w$ and $\bar z_w$ for $w \in \{5,10\}$ compared to \textbf{time}$_{15}$ and $\bar z_{15}$.

\subsubsection{Sensitivity of the long-run average cost.}
\label{susubbsec:long_run_avg}
When comparing the performance in terms of $\bar z_{w},\; \forall w \in \{5,10\}$ and the $\bar z_{15}$ obtained by using the supposedly most stable parameter $w=15$, we can see from Table~\ref{tab:sensitivity_analysis} that the difference is negligible. Specifically, compared to the $\bar z_{w}$ obtained by the $W=5$, $\bar z_{15}$ has a relative gap of $1.73\%$, $3.60\%$, and $3.47\%$ when averaged across the instances where $|H|=1$, $|H|=3$ and $|H|=6$, respectively. Whereas, compared to the $\bar z_{w}$ obtained by the $W=10$ used in our original implementation, $\bar z_{15}$ has a relative gap of $-2.41\%$, $-3.91\%$, and $0.14\%$ when averaged across the instances where $|H|=1$, $|H|=3$ and $|H|=6$, respectively.

\exclude{Although the overall difference in the performances is not, as one might expect, the performance of the supposedly \textit{most} stable parameter $w=15$ is better, on average, in terms of $\bar z_w$ compared to the supposedly \textit{least} stable parameter $w=5$. Whereas, compared to $w=10$ the performance fluctuates between worse and better. While it might seem unexpected that $w=15$ yields worse performances compared to $w=5$ and $w=15$ we suspect that this can be attributed to the estimation errors. }

\subsubsection{Sensitivity of the online training time.}
\label{susubbsec:time}
Unlike the difference in the performance in terms of $\bar z_{w}$, the difference in the time it took each policy to perform the RH procedure seems to be significant. Specifically, compared to the Time$_{w}$ obtained for $w=5$, Time$_{15}$ has a relative gap of $-63.26\%$, $-38.70\%$, and $-61.88\%$ when averaged across the instances where $|H|=1$, $|H|=3$ and $|H|=6$, respectively. Whereas, compared to the Time$_{w}$ obtained by the $w=10$ used in our original implementation, Time$_{15}$ has a relative gap of $117.89\%$, $-35.49\%$, and $-12.05\%$ when averaged across the instances where $|H|=1$, $|H|=3$ and $|H|=6$, respectively.

As we can see, the dynamic RH policy constructed by using $w=5$ has much less computational time compared to the dynamic RH policy constructed by using $w=15$. This is to be expected, however, since using $w=5$ is likely to prescribe smaller forecast horizons during the RH procedure and hence lead to less computational time overall. Similarly, apart from the instances where $|H|=1$, we can see that the dynamic RH policy constructed by using $w=10$ has less computational time compared to the one with $w=15$. Overall, using $w=5$ instead of $w=10$ will lead to an average of $77.04\%, -23.50\%$ and $56.40\%$ decrease in the computational time for instances where $|H|=1$, $|H|=3$ and $|H|=6$, respectively. However, it will also lead to an average of $4.26\%, 8.68\%$, and $3.33\%$ increase in the optimality gap for these instances respectively.

\begin{table}[htbp]
\begin{footnotesize}
\begin{center}
\begin{tabular}{|c|c|c|cc|cc|cc|}
\hline
\multirow{2}{*}{$|H|$} & \multirow{2}{*}{$|\Xi_t|$}  & \multirow{2}{*}{$d_t$} & $\bar z_{w}$ & \textbf{Time$_{w}$} & \multicolumn{2}{c|}{$(\bar z_w - \bar z_{15})/\bar z_{15}\%$} & \multicolumn{2}{c|}{$(\text{Time}_w - \text{Time}_{15})/\text{Time}_{15}\%$} \\ \cline{4-9} 
   &   & & \multicolumn{2}{c|}{$w=15$} & $w=5$  & $w=10$   & $w=5$ & $w=10$ \\ \hline
\multirow{4}{*}{1} & \multirow{2}{*}{5}  & 1000  & 10238.65 & 114.27 & -2.24\%  & -6.46\%  & -47.32\%  & 192.28\% \\  
   &   & 1500  & 42024.78 & 544.37 & 5.42\% & -1.97\%  & -72.12\%  & -8.66\% \\
   & \multirow{2}{*}{12} & 1000  & 8404.53  & 432.41 & 1.37\% & -1.07\%  & -56.44\%  & 318.52\% \\  
   &   & 1500  & 37770.62 & 1386.75  & 2.38\% & -0.14\%  & -77.17\%  & -30.59\% \\ \hline
\multirow{4}{*}{3} & \multirow{2}{*}{5}  & 1750  & 43949.17 & 1010.73  & 14.41\%  & -14.09\% & -85.15\%  & -34.53\% \\ 
   &   & 2250  & 190165.57  & 42.50  & -0.44\%  & -1.99\%  & -11.90\%  & -9.74\% \\  
   & \multirow{2}{*}{12} & 1750  & 48421.16 & 891.34 & 0.46\% & 0.65\% & -17.39\%  & -19.71\% \\  
   &   & 2250  & 232124.68  & 185.94 & -0.04\%  & -0.21\%  & -40.35\%  & -77.98\% \\ \hline
\multirow{4}{*}{6} & \multirow{2}{*}{5}  & 2000  & 13526.56 & 2383.90  & 1.46\% & -2.42\%  & -51.27\%  & 8.43\% \\  
   &   & 2250  & 27025.64 & 2671.07  & 3.28\% & 0.24\% & -61.50\%  & -27.78\% \\ 
   & \multirow{2}{*}{12} & 2000  & 12263.77 & 5337.97  & 5.86\% & 1.01\% & -64.81\%  & -9.76\% \\  
   &   & 2250  & 24988.45 & 5774.15  & 3.26\% & 1.72\% & -69.92\%  & -19.08\% \\ \hline
\end{tabular}
\end{center}

\caption{Sensitivity analysis with respect to the \textit{stalling} parameter $w$ used in the offline training step for the dynamic RH policy.} 
\label{tab:sensitivity_analysis}
\end{footnotesize}
\end{table}

\section{Conclusion}
\label{sec:conclusion}
In this paper, we have studied the question of how many stages to include in the forecast horizon when solving MSP problems using the RH procedure for both finite and infinite horizon MSP problems. In the infinite horizon discounted case, given a fixed forecast horizon $\tau$, we have shown that the resulting optimality gap associated with the static look-ahead policy in terms of the total expected discounted cost can be bounded by a function of the chosen forecast horizon $\tau$. This function could be used to provide an upper bound on forecast horizon $\tau^*_{\epsilon}$ where the corresponding static RH policy achieves a prescribed $\epsilon$ optimality gap. In the finite horizon case (with no discount), we have taken an ADP perspective and developed a dynamic RH policy where the forecast horizon $\tau$ to use in each roll of the RH procedure is chosen dynamically according to the state of the system at that stage via certain regression function that can be trained offline. This dynamic RH policy has shown to be capable of exploiting the system state related information encountered during the RH procedure. Our numerical results have illustrated the empirical behaviors of the proposed RH policies on a class of MSPs and highlighted the effectiveness of the proposed approaches. Specifically, when using a static RH policy where the length of the forecast horizon in every roll is fixed, we have shown that the optimality gap plateaued and ceased to respond to the increase in the length of the forecast horizon beyond a certain number of stages. On the other hand, we have shown that advantage of the dynamic RH policy against alternative approaches in the slight increase in the optimality gap compared to the significant reduction in the computational time.

We have identified several avenues for future research. First, we anticipate that the proposed approaches can be extended to various strategies to address the ``end-of-horizon'' effect during the RH procedure. Second, we expect that some assumptions made for the sake of simplicity in experiments and analysis can be relaxed, allowing the proposed approaches to be applied to a broader class of MSPs, including those defined on hidden Markov chains. Lastly, although we chose not to pursue in this paper, we expect that the proposed approaches can be more appealing for multi-stage stochastic integer programs with a finite action/control space.

\bibliographystyle{unsrt}  
\bibliography{references}  
\newpage
\appendix
\section{Tables}
\begin{table}[htbp]
\begin{footnotesize}
\begin{center}
\begin{tabular}{ p{2.5cm}p{12.5cm}}
\hline
  Notation & Description\\
 \hline
$H, F$ & The set of hydro/thermal plants in the HPOP problem \\
$h, f$ & A hydro/thermal plant index in the set of hydro/thermal plants \\
$\vec{c}_{g, t}, \vec{\tilde c}_{g, t}$ & Cost and random cost vector of generating thermal power at time $t$ \\
 $c_{p, t}, \tilde c_{p, t}$ & Unit penalty cost and random unit penalty cost for unsatisfied demand at time $t$\\
 $b_{h, t}, \tilde b_{h, t} $ & Amount of inflows and random inflows to hydro plant $h$ during stage $t$ \\
 $r_{h, t} $ & Amount of power generated by releasing one unit of water flow in hydro plant $h$ \\
 $d_t, d_t$ & Demand and random demand in stage $t$ \\
 $\bar q_{h}$ & Maximum allowed amount of turbined flow in hydro plant $h$ in stage $t$ \\ 
 $\underline{v}_{h, t}$, $\bar v_{h, t}$ & Minimum/Maximum level of water allowed in hydro plant $h$ \\
  $\underline{g}_{h, t}$, $\bar g_{h, t}$ & Minimum/Maximum allowed amount of power generated by thermal plant $f$ in stage $t$ \\
 $U(h)$, $L(h)$ & Set of immediate upper/lower stream hydro plants of $h$ in the network \\
 \hline
\end{tabular}
\end{center}
\caption{Notation for the parameters of the HPOP stage-$t$ problem~\eqref{eq:HPOP}.}
\label{tab:HPOP_notation}
\end{footnotesize}
\end{table}

\begin{table}[htbp]
\begin{footnotesize}
\begin{center}
\begin{tabular}{c|cccccc|c}
\hline
  & $h_1$  & $h_2$  & $h_3$ & $h_4$  & $h_5$ & $h_6$ & $\mathbb{P}(\xi^k_t \in \Xi_t)$ \\ \hline
$\xi^1_t$ & 245.50 & 125.20 & 1438.00 & 311.00 & 16.20 & 29.70 & 0.20 \\
$\xi^2_t$ & 201.70 & 103.90 & 1085.30 & 221.90 & 13.00 & 23.60 & 0.15 \\
$\xi^3_t$ & 158.00 & 82.60  & 732.50  & 132.70 & 9.90  & 17.50 & 0.30 \\
$\xi^4_t$ & 130.20 & 58.60  & 488.10  & 93.10  & 7.00  & 10.70 & 0.15 \\
$\xi^5_t$ & 102.40 & 34.60  & 243.60  & 53.40  & 4.20  & 3.90  & 0.20 \\ \hline
\end{tabular}
\end{center}
\caption{Realizations \& probability distribution of $\xi_t$ (rainfall in hydro $h \in H$) when $|\Xi_t|=5$.}
\label{tab:5_real_dist}
\end{footnotesize}
\end{table}

\begin{table}[htbp]
\begin{footnotesize}
\begin{center}
\begin{tabular}{c|cccccc|c}
\hline
   & $h_1$  & $h_2$  & $h_3$ & $h_4$  & $h_5$ & $h_6$ & $\mathbb{P}(\xi^k_t \in \Xi_t)$ \\ \hline
$\xi^1_t$  & 245.50 & 125.20 & 1438.00 & 120.00 & 16.20 & 29.70 & 0.09 \\
$\xi^2_t$  & 232.50 & 117.00 & 1329.50 & 111.00 & 15.10 & 27.40 & 0.10 \\
$\xi^3_t$  & 219.40 & 108.70 & 1220.90 & 101.90 & 14.00 & 25.00 & 0.10 \\
$\xi^4_t$  & 206.40 & 100.50 & 1112.30 & 92.90  & 12.90 & 22.70 & 0.09 \\
$\xi^5_t$  & 193.40 & 92.30  & 1003.70 & 83.90  & 11.80 & 20.30 & 0.07 \\
$\xi^6_t$  & 180.40 & 84.00  & 895.10  & 74.80  & 10.70 & 18.00 & 0.06 \\
$\xi^7_t$  & 167.40 & 75.80  & 786.60  & 65.80  & 9.70  & 15.60 & 0.06 \\
$\xi^8_t$  & 154.40 & 67.50  & 678.00  & 56.70  & 8.60  & 13.30 & 0.07 \\
$\xi^9_t$  & 141.40 & 59.30  & 569.40  & 47.70  & 7.50  & 10.90 & 0.09 \\
$\xi^{10}_t$ & 128.40 & 51.10  & 460.80  & 38.70  & 6.40  & 8.60  & 0.10 \\
$\xi^{11}_t$ & 115.40 & 42.80  & 352.20  & 29.60  & 5.30  & 6.20  & 0.10 \\
$\xi^{12}_t$ & 102.40 & 34.60  & 243.60  & 20.60  & 4.20  & 3.90  & 0.09 \\ \hline
\end{tabular}
\end{center}
\caption{Realizations \& probability distribution of $\xi_t$ (rainfall in hydro $h \in H$) when $|\Xi_t|=12$.}
\label{tab:12_real_dist}
\end{footnotesize}
\end{table}

\begin{table}[htbp]
\begin{footnotesize}
\begin{center}
\begin{tabular}{c|cccccc}
\hline
 & $h_1$ & $h_2$ & $h_3$ & $h_4$ & $h_5$ & $h_6$ \\ \hline
$r_{h,t}$  & 0.18  & 0.35  & 0.75  & 0.32  & 0.56  & 0.15 \\
$\bar q_{h,t}$   & 220.00  & 585.00  & 1688.00 & 5220.00 & 2028.00 & 1480.00 \\
$\bar v_{h, t}$  & 672.00  & - & 17217.00  & 2500.00 & - & - \\
$\underline{v}_{h, t}$ & 0.00  & - & 0.00  & 0.00  & - & - \\
$x_0$  & 336.00  & - & 10330.20  & 1250.00 & - & - \\ \hline
$U(h)$   & $\emptyset$ & $\{1\}$ & $\emptyset$ & $\{2,3\}$ & $\emptyset$ & $\{4,5\}$ \\
$L(h)$   & $\{2\}$ & $\{4\}$ & $\{4\}$ & $\{6\}$ & $\{6\}$ & $\emptyset$ \\ \hline
\end{tabular}
\end{center}
\caption{A summary of the hydro plants data.}
\label{tab:hydro_data}
\end{footnotesize}
\end{table}

\begin{table}[htbp]
\begin{footnotesize}
\begin{center}
\begin{tabular}{c|cccc}
\hline
 & $f_1$ & $f_2$  & $f_3$ & $f_4$ \\ \hline
$\bar g_{f,t}$ & 20.00& 20.00  & 20.00 &  20.00 \\
$ c_{g,t}$& 20.00& 40.00  & 80.00 &  160.00 \\ \hline

\end{tabular}
\end{center}
\caption{A summary of the thermal plants data.}\label{tab:thermo_data}
\end{footnotesize}
\end{table}

\section{Figures}
\label{append:figures}

\begin{figure}[htbp]
\begin{center}
\includegraphics[width=0.6\textwidth]{./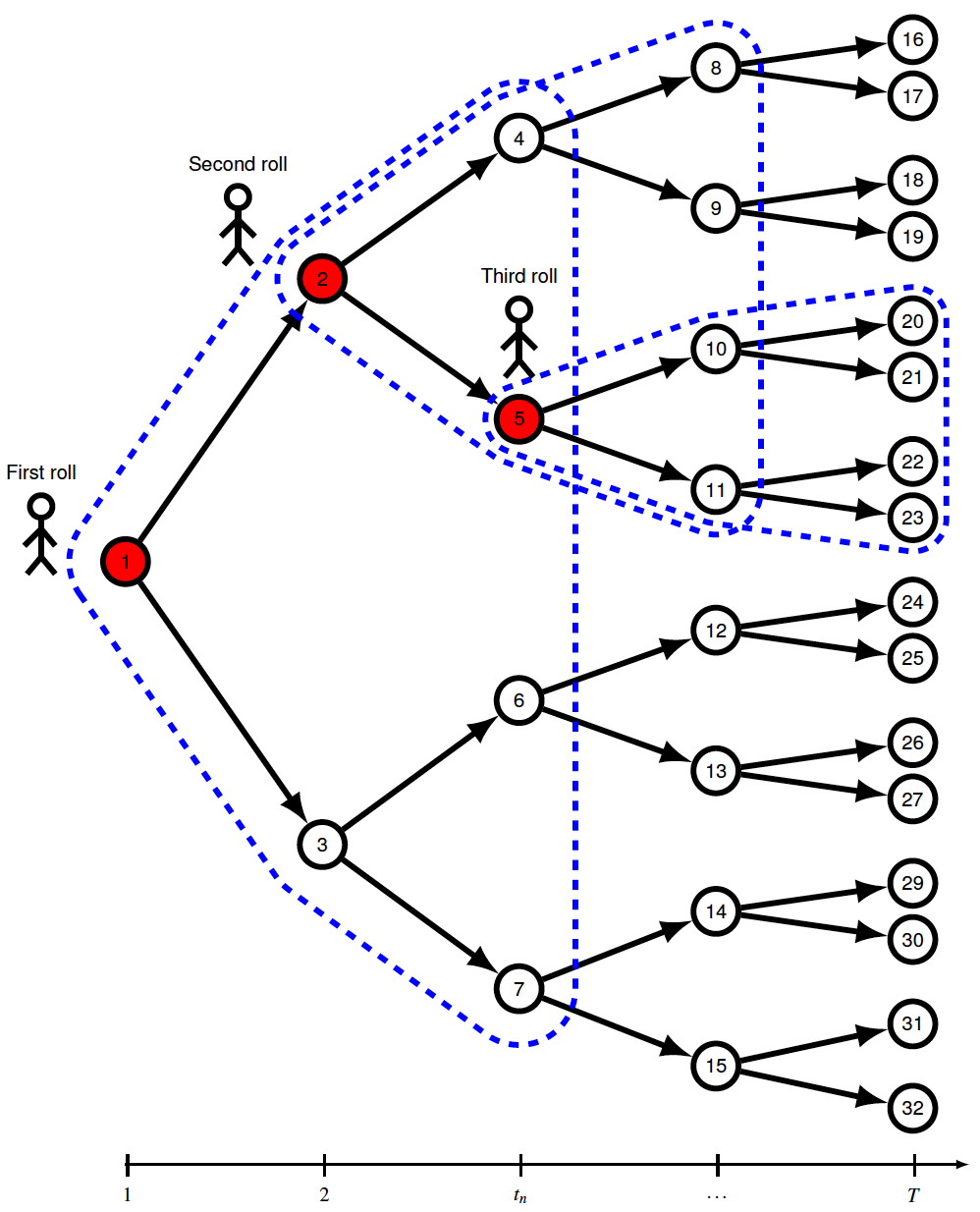}
\end{center}
\caption{Symbolic representation of the rolling horizon procedure for $T=5$ and $\tau=3$.}\label{fig:RH-procedure}
\end{figure}

\begin{figure}[htbp]
 \centering
  \subfigure[$\bar z$ for $|\Xi_t|= 5$ and $d_t = 1000$.]{
\includegraphics[scale=0.3]{./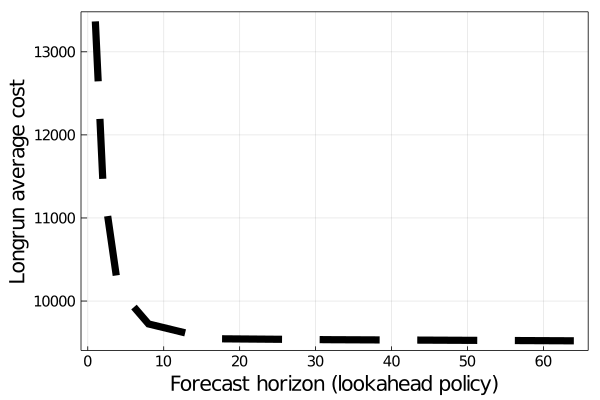}
  \label{fig:1H-5R-1000D-static_without_offline}}
  \subfigure[Training time for $|\Xi_t|= 5$ and $d_t = 1000$.]{
\includegraphics[scale=0.3]{./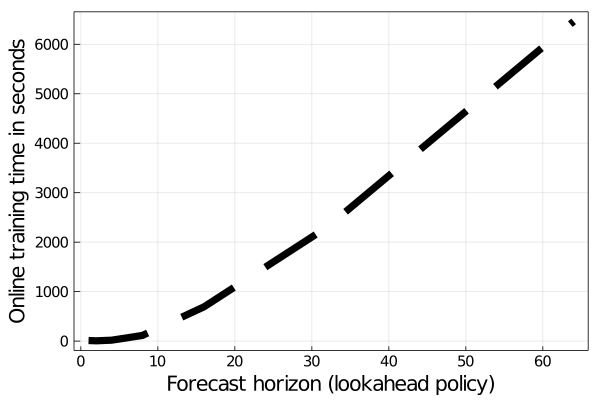}
  \label{fig:1H-5R-1000D-static_without_offline_time}}
  
  \subfigure[$\bar z$ for $|\Xi_t|= 12$ and $d_t = 1000$.]{
\includegraphics[scale=0.3]{./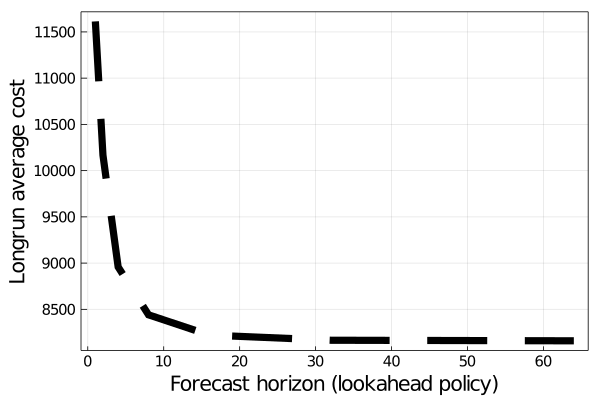}
  \label{fig:1H-12R-1000D-static_without_offline}}
  \subfigure[Training time for $|\Xi_t|= 12$ and $d_t = 1000$.]{
\includegraphics[scale=0.3]{./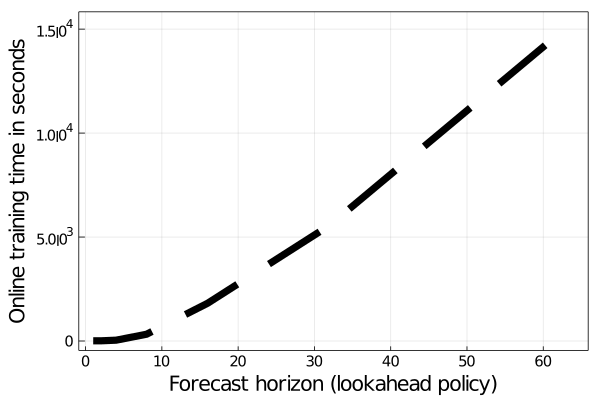}
  \label{fig:1H-12R-1000D-static_without_offline_time}}

  \subfigure[$\bar z$ for $|\Xi_t|= 5$ and $d_t = 1500$.]{
\includegraphics[scale=0.3]{./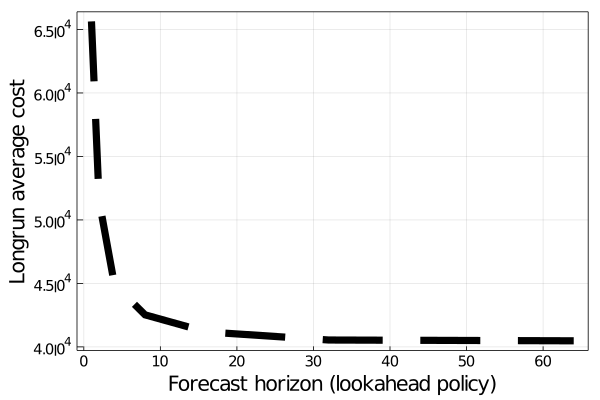}
  \label{fig:1H-5R-1500D-static_without_offline}}
  \subfigure[Training time for $|\Xi_t|= 5$ and $d_t = 1500$.]{
\includegraphics[scale=0.3]{./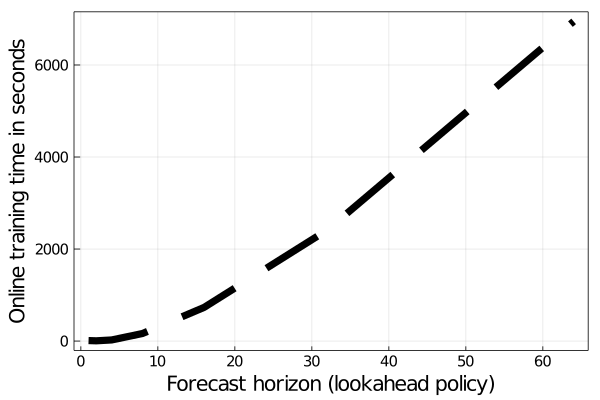}
  \label{fig:1H-5R-1500D-static_without_offline_time}}

  \subfigure[$\bar z$ for $|\Xi_t|= 12$ and $d_t = 1500$.]{
\includegraphics[scale=0.3]{./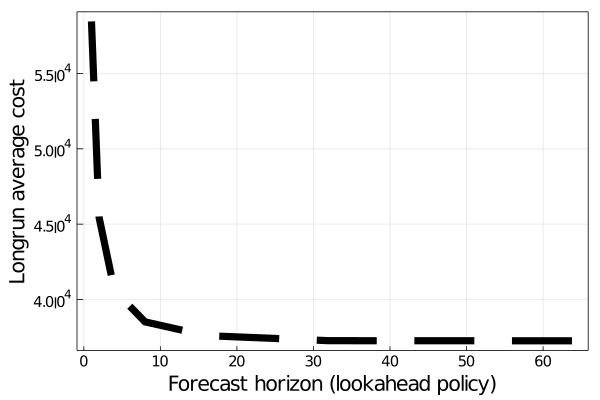}
  \label{fig:1H-12R-1500D-static_without_offline}}
  \subfigure[Training time for $|\Xi_t|= 12$ and $d_t = 1500$.]{
\includegraphics[scale=0.3]{./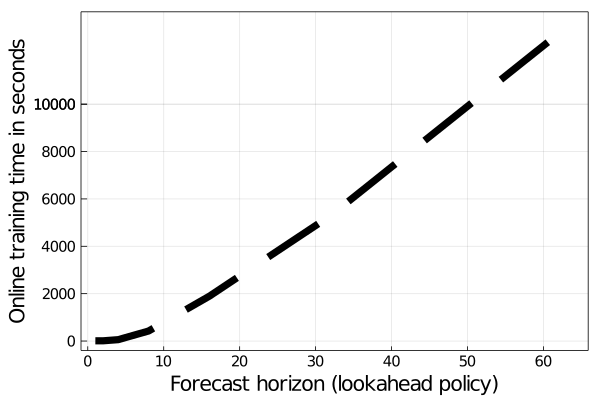}
  \label{fig:1H-12R-1500D-static_without_offline_time}}

  \hspace{0.25cm}
\caption{The long-run average cost $\bar z$ as a function of the discount factor $\gamma$ in all of the different test instances where $|H| = 1$.}
\label{fig:longrun_avg_cost_1H_static_without_offline}
\end{figure}

\begin{figure}[htbp]
 \centering
  \subfigure[$\bar z$ for $|\Xi_t|= 5$ and $d_t = 1750$.]{
\includegraphics[scale=0.3]{./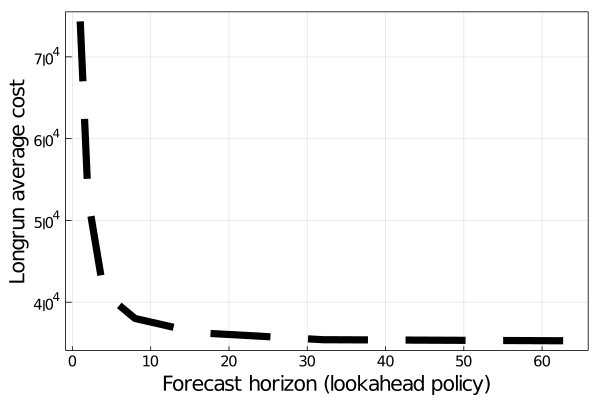}
  \label{fig:3H-5R-1750D-static_without_offline}}
  \subfigure[Training time for $|\Xi_t|= 5$ and $d_t = 1750$.]{
\includegraphics[scale=0.3]{./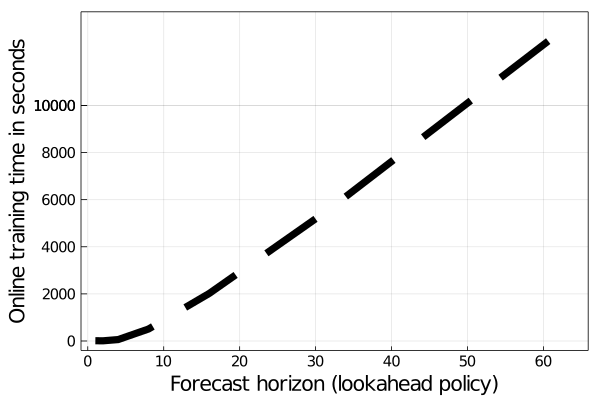}
  \label{fig:3H-5R-1750D-static_without_offline_time}}
  
  \subfigure[$\bar z$ for $|\Xi_t|= 12$ and $d_t = 1750$.]{
\includegraphics[scale=0.3]{./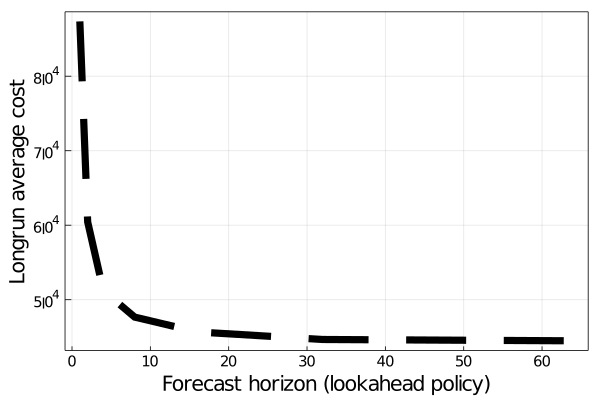}
  \label{fig:3H-12R-1750D-static_without_offline}}
  \subfigure[Training time for $|\Xi_t|= 12$ and $d_t = 1750$.]{
\includegraphics[scale=0.3]{./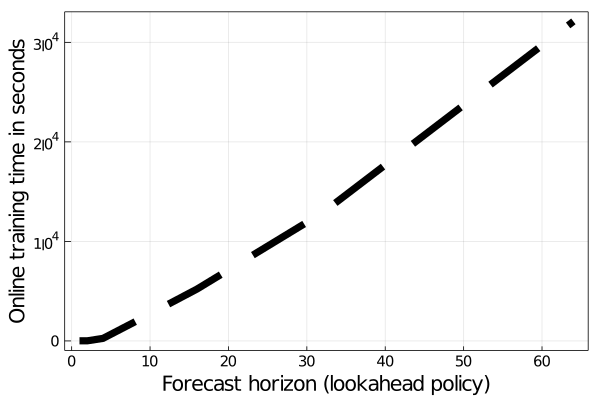}
  \label{fig:3H-12R-1750D-static_without_offline_time}}
 
  \subfigure[$\bar z$ for $|\Xi_t|= 5$ and $d_t = 2250$.]{
\includegraphics[scale=0.3]{./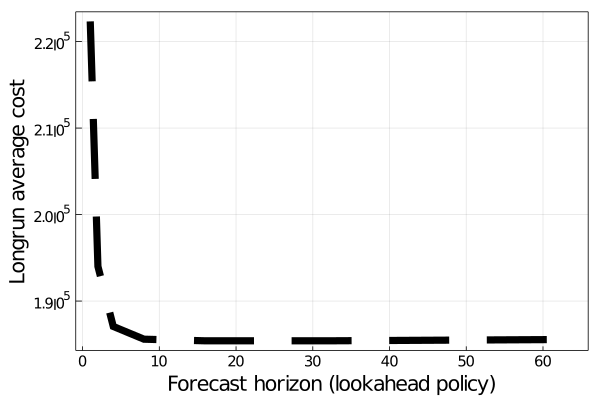}
  \label{fig:3H-5R-2250D-static_without_offline}}
  \subfigure[Training time for $|\Xi_t|= 5$ and $d_t = 2250$.]{
\includegraphics[scale=0.3]{./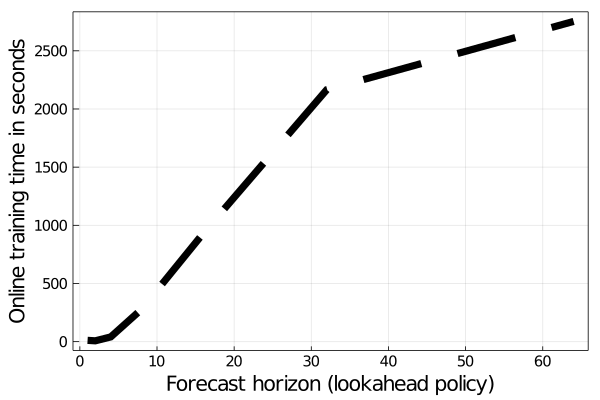}
  \label{fig:3H-5R-2250D-static_without_offline_time}}

  \subfigure[$\bar z$ for $|\Xi_t|= 12$ and $d_t = 2250$.]{
\includegraphics[scale=0.3]{./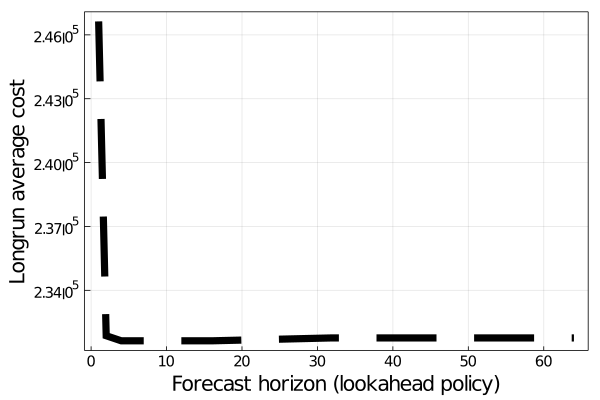}
  \label{fig:3H-12R-2250D-static_without_offline}}
  \subfigure[Training time for $|\Xi_t|= 12$ and $d_t = 2250$.]{
\includegraphics[scale=0.3]{./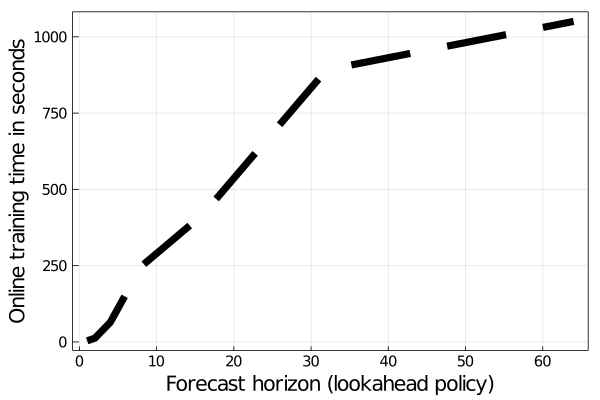}
  \label{fig:3H-12R-2250D-static_without_offline_time}}
  
  \hspace{0.25cm}
\caption{The long-run average cost $\bar z$ as a function of the discount factor $\gamma$ in all of the different test instances where $|H| = 3$.}
\label{fig:longrun_avg_cost_3H_static_without_offline}
\end{figure}

\begin{figure}[htbp]
 \centering
\subfigure[$\bar z$ for $|\Xi_t|= 5$ and $d_t = 2000$.]{
\includegraphics[scale=0.3]{./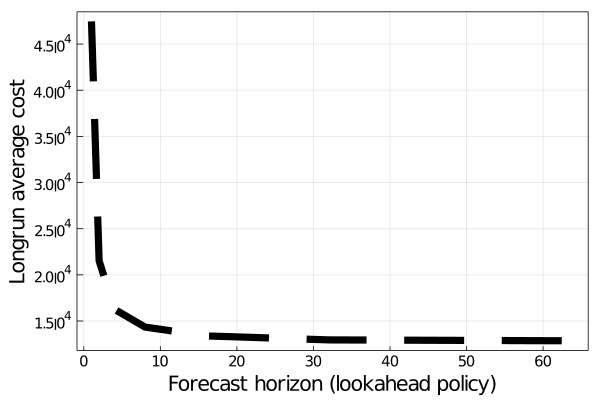}
  \label{fig:6H-5R-2000D-static_without_offline}}
  \subfigure[Training time for $|\Xi_t|= 5$ and $d_t = 2000$.]{
\includegraphics[scale=0.3]{./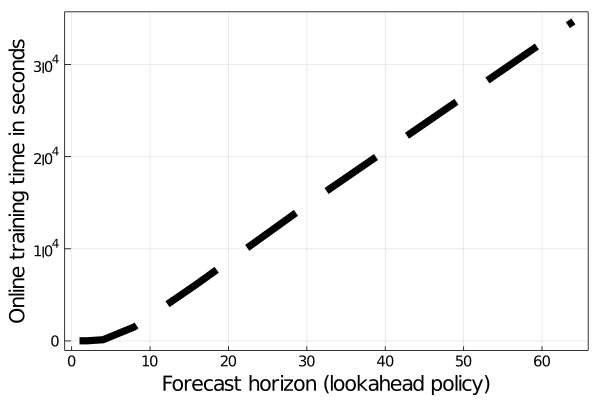}
  \label{fig:6H-5R-2000D-static_without_offline_time}}

  \subfigure[$\bar z$ for $|\Xi_t|= 12$ and $d_t = 2000$.]{
\includegraphics[scale=0.3]{./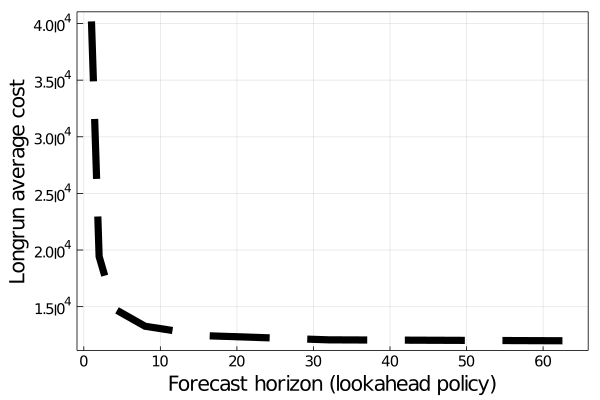}
  \label{fig:6H-12R-2000D-static_without_offline}}
  \subfigure[Training time for $|\Xi_t|= 12$ and $d_t = 2000$.]{
\includegraphics[scale=0.3]{./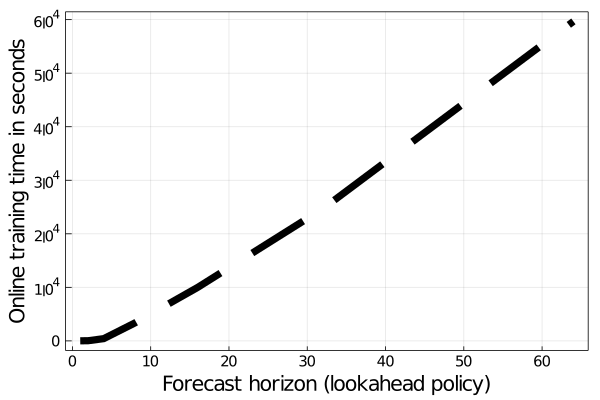}
  \label{fig:6H-12R-2000D-static_without_offline_time}}

  \subfigure[$\bar z$ for $|\Xi_t|= 5$ and $d_t = 2250$.]{
\includegraphics[scale=0.3]{./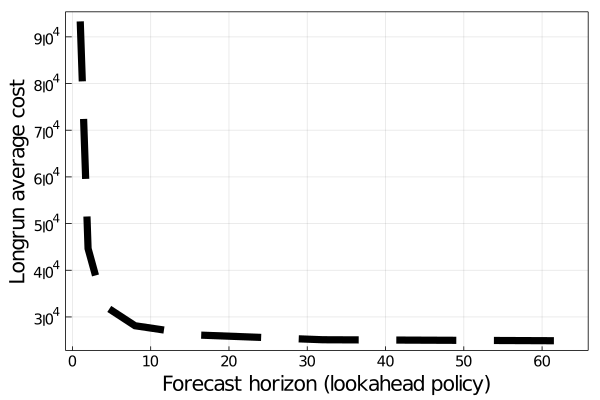}
  \label{fig:6H-5R-2250D-static_without_offline}}
  \subfigure[Training time for $|\Xi_t|= 5$ and $d_t = 2250$.]{
\includegraphics[scale=0.3]{./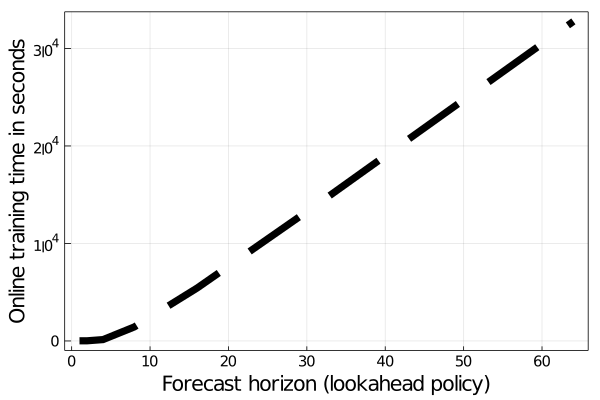}
  \label{fig:6H-5R-2250D-static_without_offline_time}}

  \subfigure[$\bar z$ for $|\Xi_t|= 12$ and $d_t = 2250$.]{
\includegraphics[scale=0.3]{./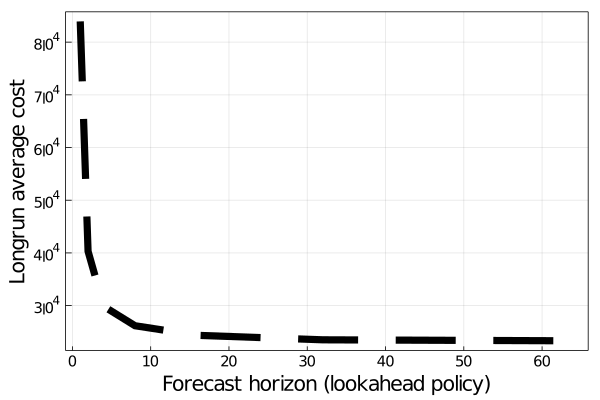}
  \label{fig:6H-12R-2250D-static_without_offline}}
  \subfigure[Training time for $|\Xi_t|= 12$ and $d_t = 2250$.]{
\includegraphics[scale=0.3]{./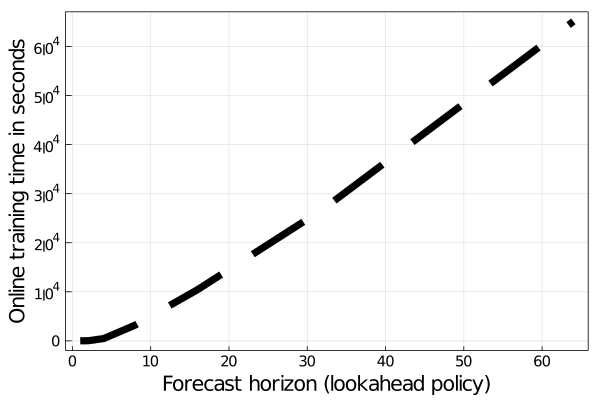}
  \label{fig:6H-12R-2250D-static_without_offline_time}}

  \hspace{0.25cm}
\caption{The long-run average cost $\bar z$ as a function of the discount factor $\gamma$ in all of the different test instances where $|H| = 6$.}
\label{fig:longrun_avg_cost_6H_static_without_offline}
\end{figure}

\begin{figure}[htbp]
 \centering
  \subfigure[$\bar z$ for $|\Xi_t|= 5$ and $d_t = 1000$.]{
\includegraphics[scale=0.3]{./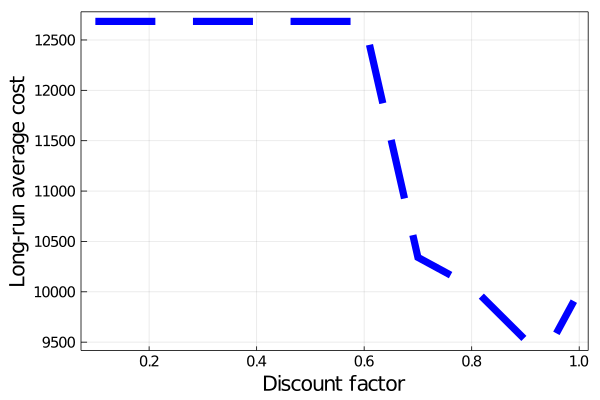}
  \label{fig:1H-5R-1000D-discounted}}
  \subfigure[Training time for $|\Xi_t|= 5$ and $d_t = 1000$.]{
\includegraphics[scale=0.3]{./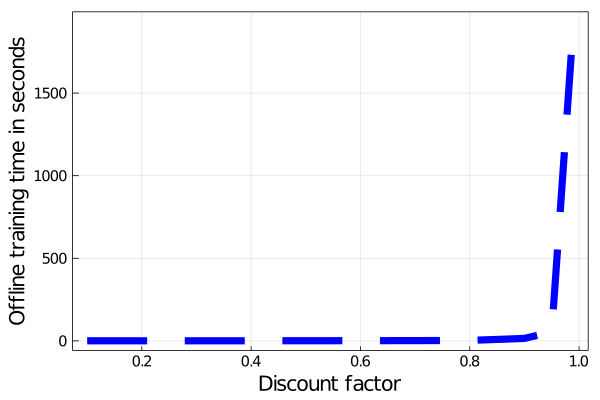}
  \label{fig:1H-5R-1000D-discounted_time}}
  
  \subfigure[$\bar z$ for $|\Xi_t|= 12$ and $d_t = 1000$.]{
\includegraphics[scale=0.3]{./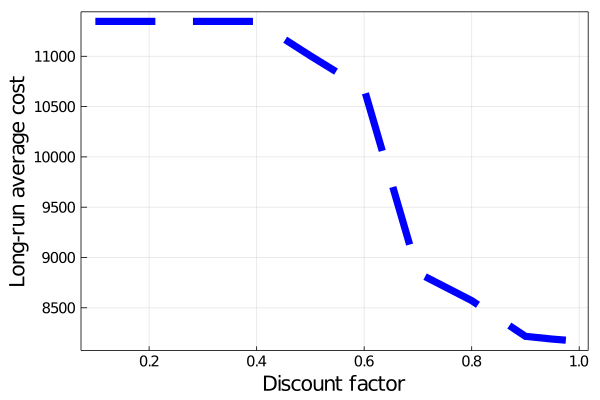}
  \label{fig:1H-12R-1000D-discounted}}
  \subfigure[Training time for $|\Xi_t|= 12$ and $d_t = 1000$.]{
\includegraphics[scale=0.3]{./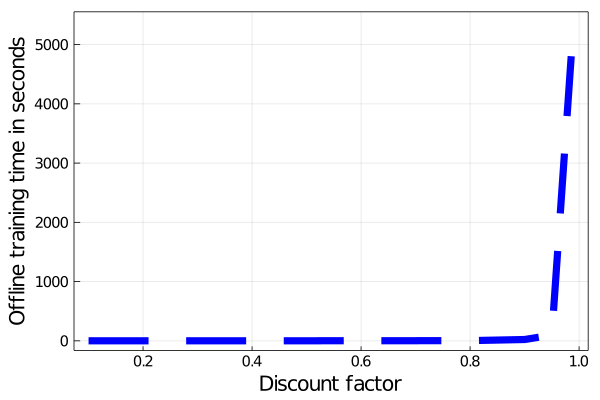}
  \label{fig:1H-12R-1000D-discounted_time}}

  \subfigure[$\bar z$ for $|\Xi_t|= 5$ and $d_t = 1500$.]{
\includegraphics[scale=0.3]{./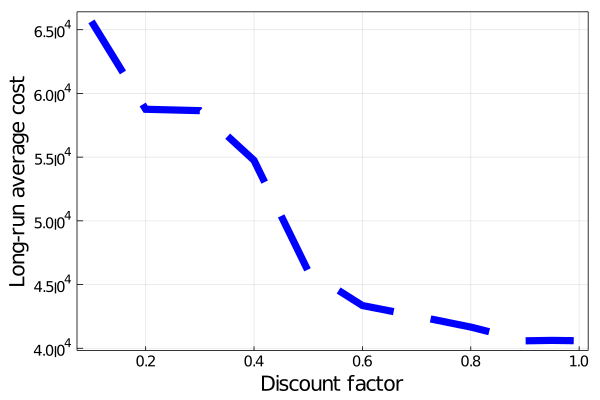}
  \label{fig:1H-5R-1500D-discounted}}
  \subfigure[Training time for $|\Xi_t|= 5$ and $d_t = 1500$.]{
\includegraphics[scale=0.3]{./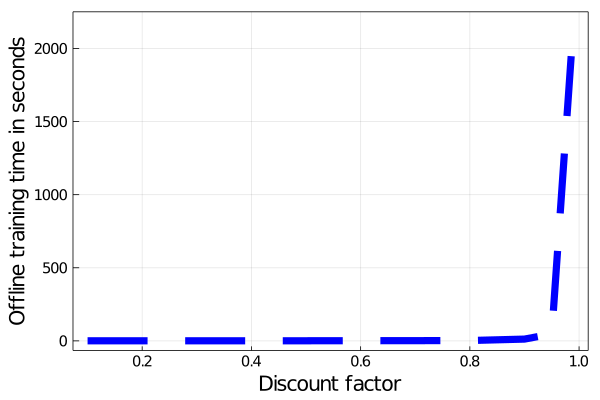}
  \label{fig:1H-5R-1500D-discounted_time}}

  \subfigure[$\bar z$ for $|\Xi_t|= 12$ and $d_t = 1500$.]{
\includegraphics[scale=0.3]{./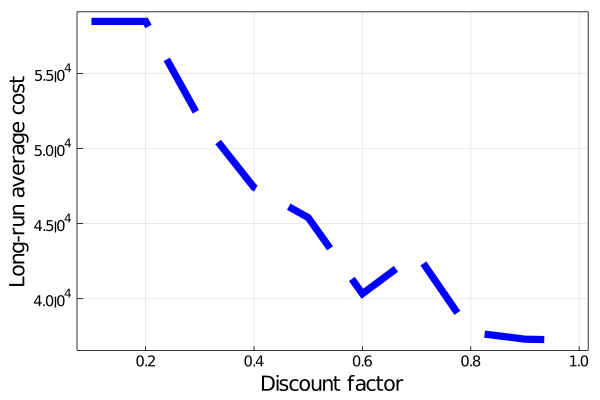}
  \label{fig:1H-12R-1500D-discounted}}
  \subfigure[Training time for $|\Xi_t|= 12$ and $d_t = 1500$.]{
\includegraphics[scale=0.3]{./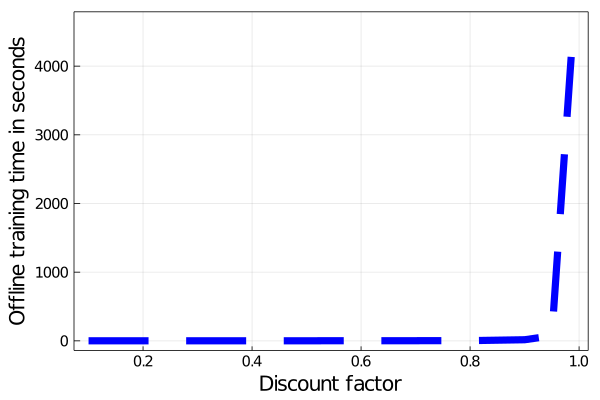}
  \label{fig:1H-12R-1500D-discounted_time}}

  \hspace{0.25cm}
\caption{The long-run average cost $\bar z$ as a function of the discount factor $\gamma$ in all of the different test instances where $|H| = 1$.}
\label{fig:longrun_avg_costH_discount}
\end{figure}

\begin{figure}[htbp]
 \centering
  \subfigure[$\bar z$ for $|\Xi_t|= 5$ and $d_t = 1750$.]{
\includegraphics[scale=0.3]{./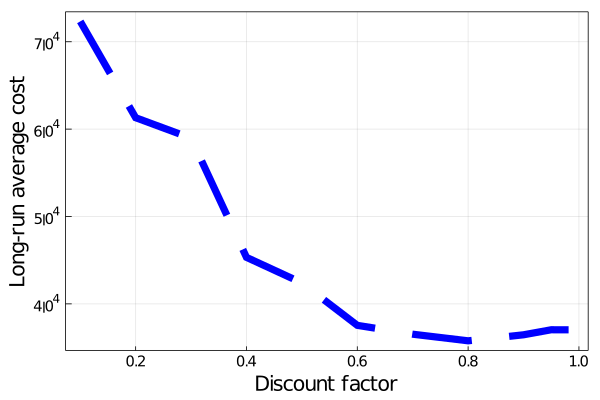}
  \label{fig:3H-5R-1750D-discounted}}
  \subfigure[Training time for $|\Xi_t|= 5$ and $d_t = 1750$.]{
\includegraphics[scale=0.3]{./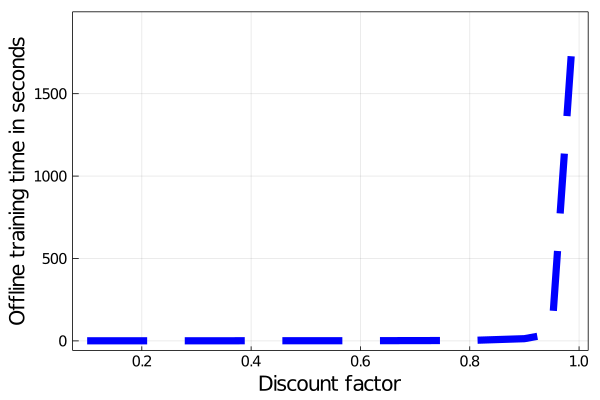}
  \label{fig:3H-5R-1750D-discounted_time}}
  
  \subfigure[$\bar z$ for $|\Xi_t|= 12$ and $d_t = 1750$.]{
\includegraphics[scale=0.3]{./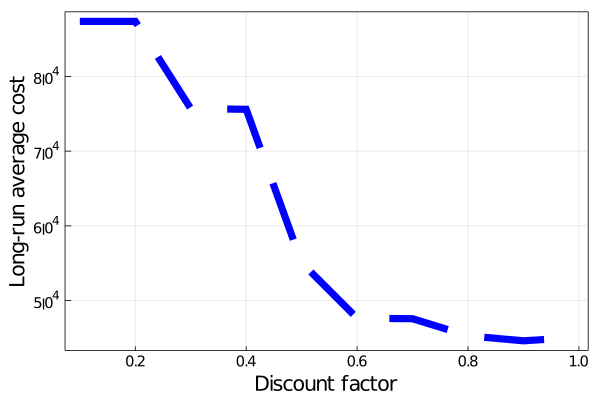}
  \label{fig:3H-12R-1750D-discounted}}
  \subfigure[Training time for $|\Xi_t|= 12$ and $d_t = 1750$.]{
\includegraphics[scale=0.3]{./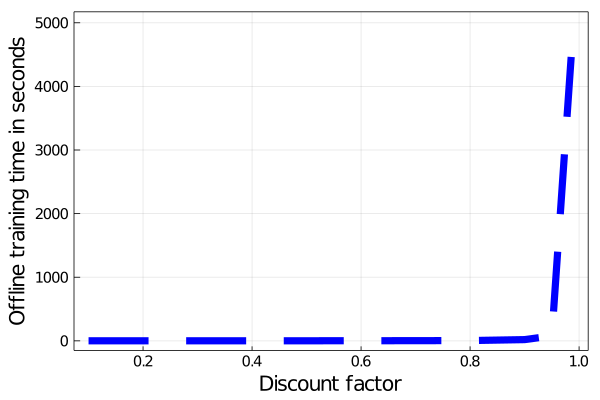}
  \label{fig:3H-12R-1750D-discounted_time}}
 
  \subfigure[$\bar z$ for $|\Xi_t|= 5$ and $d_t = 2250$.]{
\includegraphics[scale=0.3]{./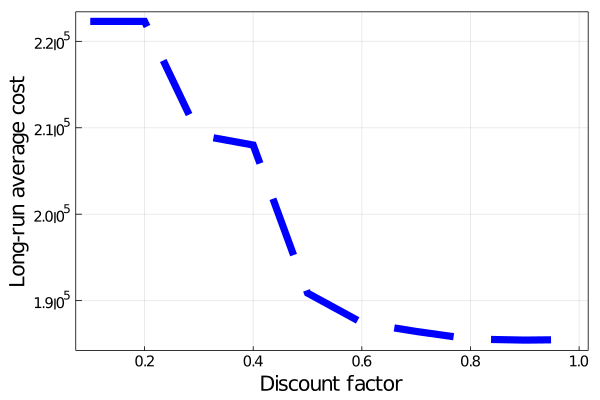}
  \label{fig:3H-5R-2250D-discounted}}
  \subfigure[Training time for $|\Xi_t|= 5$ and $d_t = 2250$.]{
\includegraphics[scale=0.3]{./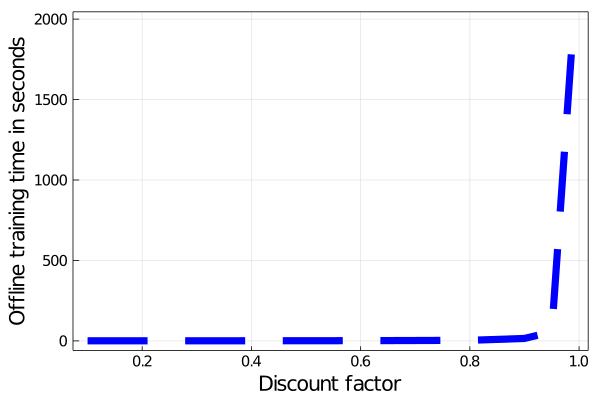}
  \label{fig:3H-5R-2250D-discounted_time}}

  \subfigure[$\bar z$ for $|\Xi_t|= 12$ and $d_t = 2250$.]{
\includegraphics[scale=0.3]{./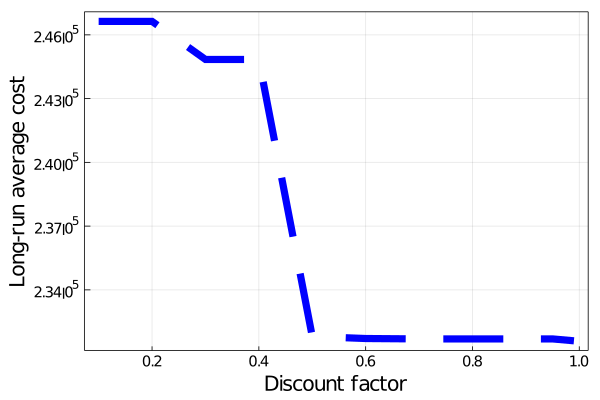}
  \label{fig:3H-12R-2250D-discounted}}
  \subfigure[Training time for $|\Xi_t|= 12$ and $d_t = 2250$.]{
\includegraphics[scale=0.3]{./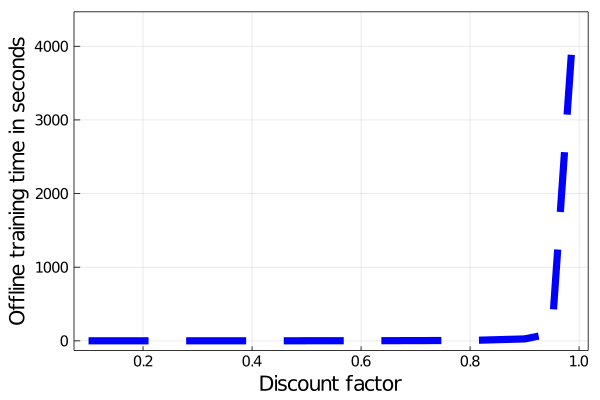}
  \label{fig:3H-12R-2250D-discounted_time}}
  
  \hspace{0.25cm}
\caption{The long-run average cost $\bar z$ as a function of the discount factor $\gamma$ in all of the different test instances where $|H| = 3$.}
\label{fig:longrun_avg_cost_3H_discount}
\end{figure}

\begin{figure}[htbp]
 \centering
\subfigure[$\bar z$ for $|\Xi_t|= 5$ and $d_t = 2000$.]{
\includegraphics[scale=0.3]{./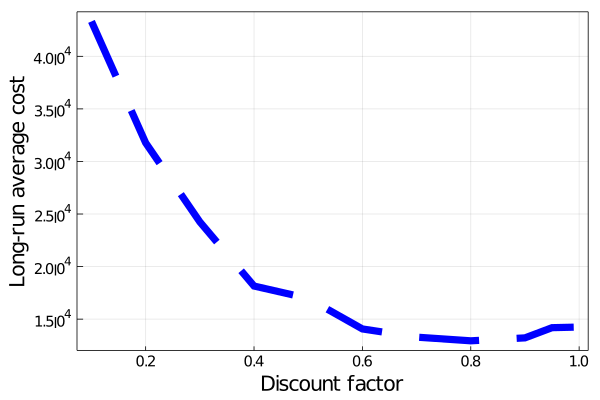}
  \label{fig:6H-5R-2000D-discounted}}
  \subfigure[Training time for $|\Xi_t|= 5$ and $d_t = 2000$.]{
\includegraphics[scale=0.3]{./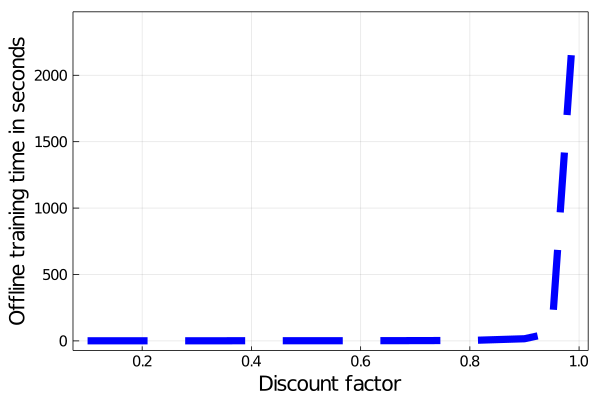}
  \label{fig:6H-5R-2000D-discounted_time}}

  \subfigure[$\bar z$ for $|\Xi_t|= 12$ and $d_t = 2000$.]{
\includegraphics[scale=0.3]{./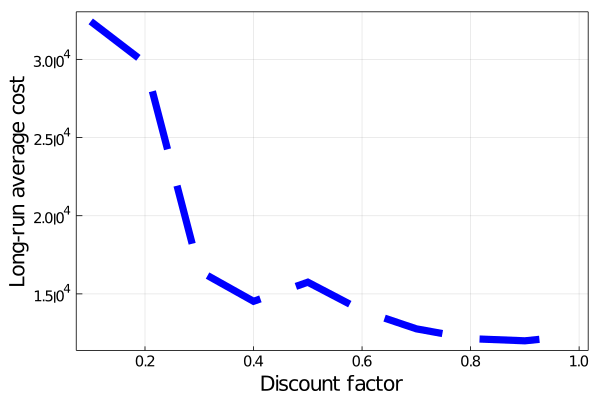}
  \label{fig:6H-12R-2000D-discounted}}
  \subfigure[Training time for $|\Xi_t|= 12$ and $d_t = 2000$.]{
\includegraphics[scale=0.3]{./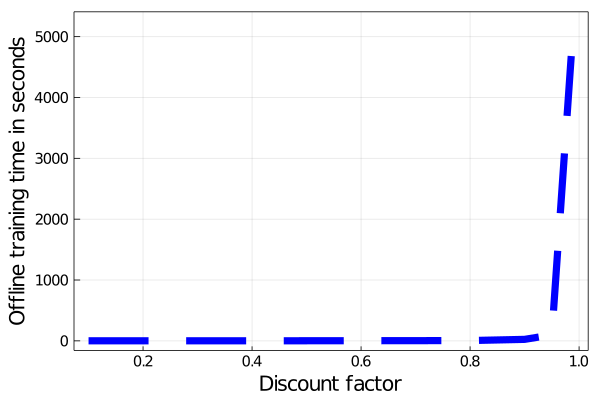}
  \label{fig:6H-12R-2000D-discounted_time}}

  \subfigure[$\bar z$ for $|\Xi_t|= 5$ and $d_t = 2250$.]{
\includegraphics[scale=0.3]{./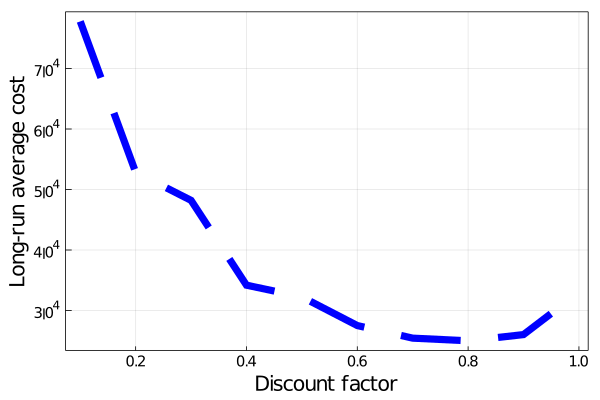}
  \label{fig:6H-5R-2250D-discounted}}
  \subfigure[Training time for $|\Xi_t|= 5$ and $d_t = 2250$.]{
\includegraphics[scale=0.3]{./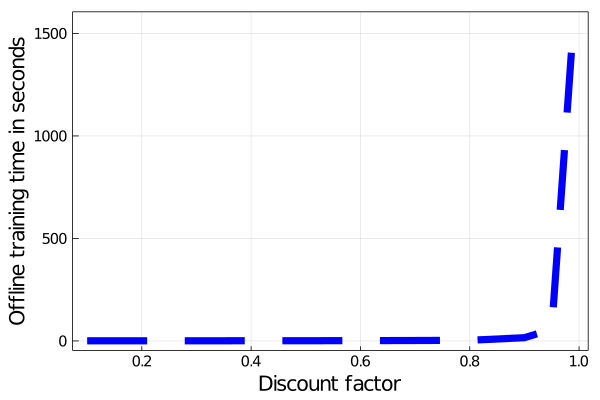}
  \label{fig:6H-5R-2250D-discounted_time}}

  \subfigure[$\bar z$ for $|\Xi_t|= 12$ and $d_t = 2250$.]{
\includegraphics[scale=0.3]{./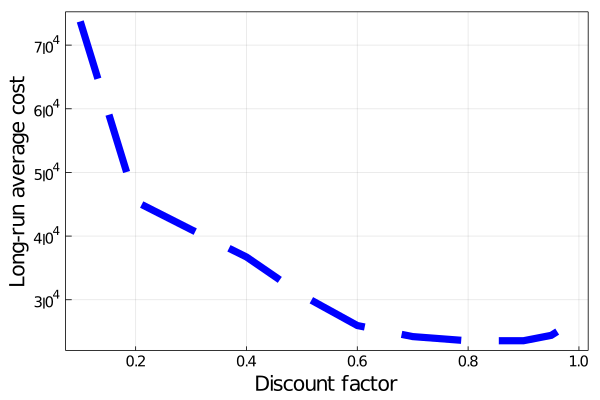}
  \label{fig:6H-12R-2250D-discounted}}
  \subfigure[Training time for $|\Xi_t|= 12$ and $d_t = 2250$.]{
\includegraphics[scale=0.3]{./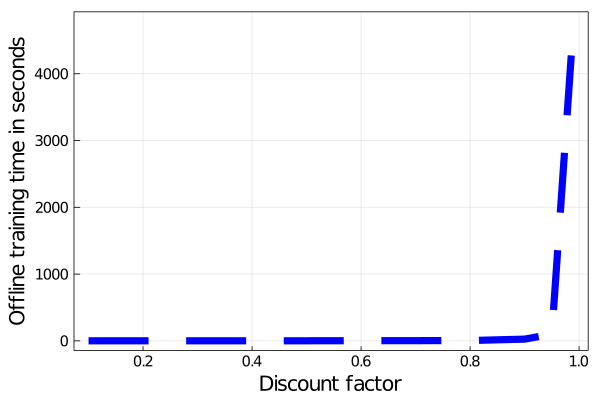}
  \label{fig:6H-12R-2250D-discounted_time}}

  \hspace{0.25cm}
\caption{The long-run average cost $\bar z$ as a function of the discount factor $\gamma$ in all of the different test instances where $|H| = 6$.}
\label{fig:longrun_avg_cost_6H_discount}
\end{figure}

\end{document}